\newtheorem{theorem}{Theorem}[section]
\newtheorem{lemma}[theorem]{Lemma}
\newtheorem{proposition}[theorem]{Proposition}
\newtheorem{corollary}[theorem]{Corollary}
\theoremstyle{definition}
\newtheorem{definition}[theorem]{Definition}
\newtheorem{example}[theorem]{Example}
\newtheorem{remark}[theorem]{Remark}
\newtheorem{conjecture}[theorem]{Conjecture}
\newtheorem{question}[theorem]{Question}
\numberwithin{equation}{section}
\def\ggg{\mathfrak{g}}
\def\gl{\mathfrak{gl}}
\def\ggg{\mathfrak{g}}
\def\gd{\dot{g}}
\def\uuu{\mathfrak{u}}
\def\caln{\mathcal{N}}
\def\cala{\mathcal{A}}
\def\cals{\mathcal{S}}
\def\calc{\mathcal{C}}
\def\cald{\mathcal{D}}
\def\cale{\mathcal{E}}
\def\calf{\mathcal{F}}
\def\calh{\mathcal{H}}
\def\calp{\mathscr{P}}
\def\scrl{\mathscr{L}}
\def\frakN{\mathfrak{N}}
\def\bc{\mathbf{c}}
\def\bi{\mathbf{i}}
\def\bj{\mathbf{j}}
\def\bk{\mathbf{k}}
\def\bl{\mathbf{l}}
\def\bh{\mathbf{h}}
\def\bp{\mathbf{p}}
\def\bq{\mathbf{q}}
\def\bs{\mathbf{s}}
\def\bt{{\mathbf{t}}}
\def\bx{{\mathbf{x}}}
\def\Gm{\mathbf{G_m}}
\def\bbc{\mathbb{C}}
\def\bbf{\mathbb{F}}
\def\bbn{\mathbb{N}}
\def\ug{\underline{g}}
\def\uG{\underline{G}}
\def\uu{\underline{u}}
\def\uGz{\underline{G_0}}
\def\um{\underline{m}}
\def\ur{\underline{r}}
\def\un{\underline{n}}
\def\ul{\underline{l}}
\def\uV{\underline{V}}
\def\vtr{V^{\otimes r}}
\def\uvtr{\underline{V}^{\otimes r}}
\def\duvtr{{\underline{V}^*}^{\otimes r}}
\def\fsr{\mathfrak{S}_r}
\def\fsl{\mathfrak{S}_l}
\def\fsm{\mathfrak{S}_m}
\def\fsk{\mathfrak{S}_k}
\def\fsru{\mathfrak{S}^{(r)}}
\def\ann{{\text{ann}}}
\def\res{\mathsf{Res}}
\def\bk{\mathbf{k}}
\def\tpis{\tilde\pi_\bs}
\def\sfC{\textsf{C}}
\def\Ann{\mbox{\rm Ann}}
\def\Lie{\mathsf{Lie}}
\def\GL{\text{GL}}
\def\End{\text{End}}
\def\Diag{\text{Diag}}
\def\id{\mathsf{id}}
\newcommand{\fz}{\mathbb{Z}} 
\newcommand{\cw}{\mathscr{W}}
\def\GL{\text{\rm GL}}
\def\diag{\text{\rm Diag}}
\def\td{\texttt{d}}
\begin{document}

\subjclass[2010]{20G05, 20G20, 20C30, 17B10}

\keywords{enhanced-reductive algebraic groups, tensor representations, symmetric groups, Parabolic /Levi Schur algebras, degenerated double Hecke algebras, (Parabolic /Levi) Schur-Weyl duality, blocks, Cartan invariants}

\thanks{This work is partially supported by the National Natural Science Foundation of China (12071136, 11671138 and 11771279), Shanghai Key Laboratory of PMMP (No. 13dz2260400).}

\title[On enhanced reductive groups (I)]{On enhanced reductive groups (I): \\ Parabolic Schur algebras and the dualities related to degenerate double Hecke algebras}
\author{Bin Shu, Yunpeng Xue and Yufeng Yao}

\begin{center}\textsc{Dedicated to the memory of Professor Guang-Yu Shen}
\end{center}

\address{School of Mathematical Sciences, East China Normal University, Shanghai, 200241, China.} \email{bshu@math.ecnu.edu.cn}
\address{School of Mathematical Sciences, East China Normal University, Shanghai, 200241, China.} \email{1647227538@qq.com}
\address{Department of Mathematics, Shanghai Maritime University, Shanghai, 201306, China.}\email{yfyao@shmtu.edu.cn}

\begin{abstract} An enhanced algebraic group $\uG$ of $G=\GL(V)$ over $\bbc$ is a product variety $\GL(V)\times V$, endowed with an enhanced cross product. Associated with a natural tensor representation of $\uG$, there are naturally Levi and parabolic Schur algebras $\mathcal{L}$ and $\mathcal{P}$ respectively. We precisely investigate their structures, and study the dualities on the enhanced tensor representations for variant groups and algebras. In this course, an algebraic model  of  so-called degenerate double Hecke algebras (DDHA) is produced, and becomes a powerful implement. The connection between $\mathcal{L}$ and DDHA gives rise to two  results for the classical representations of $\GL(V)$: (i) A  duality between $\GL(V)\times\Gm$ and DDHA where $\Gm$
is the one-dimensional multiplicative group; (ii) A branching duality formula. With aid of  the above discussion, we further obtain a parabolic Schur-Weyl duality for $\uG\rtimes \Gm$. What is more, the parabolic Schur subalgebra turns out to have only one block. The Cartan invariants for this algebra are precisely determined.
\end{abstract}
\maketitle

\setcounter{tocdepth}{1}\tableofcontents
\begin{center}
\end{center}

\section*{Introduction}
\subsection{} An algebraic group $G$  is called a semi-reductive group if $G$ is a semi-direct product of a reductive closed subgroup $G_0$ and the unipotent radical $U$.
 The study of semi-reductive algebraic groups and their Lie algebras becomes very important to lots of cases when the underground filed is of characteristic $p>0$ (see \cite{OSY}). The examples what we are concerned arise from  restricted simple Lie algebras of Cartan type (see \cite{PS}, \cite{SF} and \cite{Wil}).  In the present paper, we have another natural example.  Let $G=\GL(V)$ and $\nu$ be the natural representation of $G$ on $V$. Then  we have a typical enhanced reductive algebraic group $\uG=G\times_\nu V$, which is a closed subgroup of $\GL(\uV)$ with $\uV$ being a one-dimensional extension of $V$. The enhanced reductive group $\uG$ is naturally a semi-reductive group. Some study of semi-reductive algebraic group will be done in some other place (to see \cite{OSY}).

\subsection{} By the classical Schur-Weyl duality, the study of polynomial representations of general linear groups produces Schur algebras.  Precisely, for a given infinite filed $\bbf$, and $G=\GL(n,\bbf)$ the general linear group over $\bbf$, the Schur algebra $S_\bbf(n,r)$ is exactly $\End_\bbf(E^{\otimes r})^{\fsr}$ for $E=\bbf^n$.
The clear structure makes Schur algebras become powerful and tractable in the study of polynomial representations of $\GL(n,\bbf)$ (see \cite{Gr}, \cite{Ma}, {\sl{etc}}.). 

By analogy of this, the tensor representations of an enhanced group $\uG$ naturally produce the so-called enhanced Schur algebra $\cale_\bbc(n,r)$  which are the algebras generated by the image of $\uG$ in the $r$th tensor representation on $\uvtr$ for $V=\bbc^n$, repsectively (see \S\ref{en sch sec}). Related with $\uG$, there is canonically a short exact sequence of closed subgroups of $\GL_{n+1}$: $\uG\hookrightarrow \uG\rtimes\Gm\twoheadrightarrow \Gm$. And $\uG\rtimes\Gm$ is actually a parabolic subgroup of $\GL_{n+1}$ associated with the Levi subgroup $\GL_n\times \Gm$.
Correspondingly, the so-called Levi and parabolic Schur algebras $\mathscr{L}(n, r)$ and $\mathscr{P}(n, r)$ naturally arise, which are by definition the image of $\GL_n\times \Gm$ and $\uG\rtimes\Gm$ in $\End_{\bbf}(\uvtr)$, respectively.

For simplicity, we work with $\bbf=\bbc$ in the main body of the text. Then we can realize $\cale(n,r):=\cale_\bbc(n,r)$ and $\calp(n,r)$ as subalgebras of $S_\bbc(n+1,r)$. In the case $\bbf=\bbc$, $\GL(n,\bbc)$ will be simply denoted by $\GL_n$.

\subsection{} The main purposes are double. One is to develop the Levi, and parabolic  Schur algebras and the related representations. The another one is to investigate
dualities of variant groups and algebras in the enhanced tensor representations. In this course, an important algebraic model of so-called degenerate double Hecke algebras\footnote{In the literature, double affine Hecke algebras (DAHA) arising from the study of KZ equations are familiar to authors ({\sl{ref}}. \cite{Ch}). Our DDHA  have no direct relation with the former.} (DDHAs for short)  is introduced, in the same spirit of  degenerate affine Hecke algebras and the related (see \cite{Dr} and \cite{AS}).
 Roughly speaking, the DDHA $\calh_r$ of type $A_{r-1}$ is a combination of varieties of subalgebras generated compatibly  by the group algebras $\bbc\fsr$ and $\bbc\fsl$ for all positive integers $l$ not bigger than $r$. The $l$th DDHA $\calh^l_r$ is an associative algebra defined by generators $\bs_i$, and $\bx_\sigma$, $i=1,\ldots,r-1$, and $\sigma\in\fsl$, and by relations:
\begin{align*}
\bs_i\bx_{\sigma}=\bx_{s_i\circ\sigma},\;\; \bx_\sigma \bs_i=\bx_{\sigma\circ s_i} \mbox{ for }\sigma\in\fsl, i<l;
\end{align*}
\begin{align*}
\bs_i\bx_{\sigma}=\bx_\sigma=\bx_\sigma \bs_i \mbox{ for }\sigma\in\fsl, i>l.
\end{align*}
together with the defining relations of  $\fsr$ and of $\fsl$ (see (\ref{ddha 1})-(\ref{ddha 6}) for the complete defining relations of a DDHA). This is an infinite-dimensional algebra. Nevertheless, the $l$th DDHA $\calh^l_r$ naturally arises from the tensor representation $(\bbc^{n+1})^{\otimes r}$ over $\GL_n$. The core operators in $\End_\bbc((\bbc^{n+1})^{\otimes r})$ for $l< r$ and $\sigma\in\fsl$, are presented as below
$$x_\sigma=\Psi_l(\sigma)\otimes \id^{\otimes r-l} $$
where $\Psi_l(\sigma)$ is a position permutation by $\sigma$ in the first $l$th tensor factors (see the paragraph around (\ref{sigma operator}) for the precise meaning). From those $x_\sigma$ and all usual operators $s_i$ interchanging the $i$th and $(i+1)$th factors in $(\bbc^{n+1})^{\otimes r}$, $\calh^l_r$ naturally arises. Correspondingly, $(\bbc^{n+1})^{\otimes r}$ becomes a natural module over $\calh_r$. This representation is denoted by $\Xi$. Then $\Xi(\calh_r)$ becomes finite-dimensional, denoted by $D(n,r)$.
On the enhanced-tensor representation space $\uvtr$,  the role of pair $(\calh_r, D(n,r))$  is somewhat a counterpart of the one of pair $(\GL(V), S_\bbc(n,r))$ when we consider the usual tensor representation space $V^{\otimes r}$.

The above DDHAs  are powerful ingredients   in the course of establishing dualities for the enhanced tensor representations.

\subsection{} Below are the main results.

\begin{theorem}
{\rm(1)} (Theorem \ref{enh sch stru}) The parabolic Schur algebra $\calp(n,r)$ has a basis  $\{\xi_{\tilde\pi_\bs,\bj}\mid (\tilde\pi_\bs,\bj)\in E\}$, and
$$\dim\calp(n,r)=\sum\limits_{k=0}^r{{n+k-1}\choose k}{{n+k}\choose k}.$$
There is a set of canonical generators $\{\theta_{\bs,\bt},\xi_{\bi,\bj}\mid (\bs,\bt)\in\Lambda, (\bi,\bj)\in D\slash \fsr\}$ for $\calp(n,r)$.

{\rm(2)} (Theorems \ref{thm:irr IPM csf} and \ref{thm: Cartan inv}) The parabolic Schur algebra $\calp(n,r)$ has one block. The isomorphism classes of irreducible modules and the indecomposable projective modules (PIM for short) are parameterized by the set of dominant weights $\Lambda^+:=\bigcup_{l=1}^r\Lambda^+(n,l)$, respectively.
For an irreducible module $D_{\gamma'}$ and a PIM $P_\gamma$ with $\gamma,\gamma^{\prime}\in \Lambda^+$, set $a_{\gamma,\, \gamma^{\prime}}:=(P_\gamma: D_{\gamma^{\prime}})$, and $\ell(\gamma)=\#\mathfrak{S}_n.\gamma$.  Then the Cartan invariants are presented as
\begin{equation*}\label{comp. number}
a_{\gamma,\ \gamma^{\prime}}=\begin{cases}
1, &\text{if}\,\, \gamma^{\prime}=\gamma;\cr
\ell(\gamma^{\prime}), &\text{if}\,\,|\gamma^{\prime}|<|\gamma|;\cr
0, &\text{otherwise}.
\end{cases}
\end{equation*}
\end{theorem}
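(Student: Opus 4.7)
The plan is to use the embedding $\calp(n,r)\hookrightarrow S_\bbc(n+1,r)$ as the backbone, exploiting that the ambient Schur algebra has a Green-type basis $\{\xi_{\bi,\bj}\}$ indexed by $\fsr$-orbits on pairs of multi-indices from $\{1,\ldots,n+1\}^r$.

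For part (1), I would first cut out $\calp(n,r)$ inside $S_\bbc(n+1,r)$ by the parabolic triangularity condition attached to the flag $V\subset\uV$: a basis element $\xi_{\bi,\bj}$ lies in $\calp(n,r)$ if and only if at each tensor position the row-index involves the enhanced direction $e_{n+1}$ only when the column-index does, reflecting that $\uG\rtimes\Gm$ preserves $V$. Stratifying the surviving $\fsr$-orbits by $k$, the number of non-enhanced positions, yields the product $\binom{n+k-1}{k}\binom{n+k}{k}$: the first factor counts weakly increasing multi-indices of length $k$ in $\{1,\ldots,n\}$ (the row side), the second counts those of length $k$ in $\{1,\ldots,n+1\}$ (the column side, which may still reuse the enhanced letter paired against itself). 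Summing over $k=0,\ldots,r$ delivers the dimension formula. The two canonical generating families $\theta_{\bs,\bt}$ and $\xi_{\bi,\bj}$ (indexed by $\Lambda$ and $D/\fsr$) are then shown to span: the former arises from Green's basis of the Levi Schur algebra $\scrl(n,r)$ restricted to pairs within a single filtration stratum, the latter from pairs crossing strata via the unipotent radical $V\subset\uG$; closedness under multiplication is a routine matrix-unit calculation.

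For part (2), I would first identify the semisimple quotient of $\calp(n,r)$. Since the unipotent radical $V\subset\uG$ acts nilpotently on $\uvtr$, its image generates the Jacobson radical, and the quotient is the Levi Schur algebra $\scrl(n,r)$, which decomposes as $\bigoplus_{l=0}^{r}S_\bbc(n,l)$ according to the number of enhanced tensor factors. In characteristic zero each $S_\bbc(n,l)$ is semisimple with irreducibles labelled by $\Lambda^+(n,l)$, so the irreducibles of $\calp(n,r)$ are indexed by $\Lambda^+=\bigsqcup_{l=0}^{r}\Lambda^+(n,l)$. For the one-block assertion, I would exhibit a nonzero $\mathrm{Ext}^1(D_\gamma,D_{\gamma'})$ whenever $|\gamma'|=|\gamma|-1$ by building an explicit length-two $\calp(n,r)$-module from the unipotent action that lowers the enhancement degree by one; iterating links all of $\Lambda^+$, forcing a single block. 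For the Cartan invariants I would filter $P_\gamma$ by enhancement degree, pass to the associated graded as an $\scrl(n,r)$-module, and invoke the branching duality formula from the DDHA analysis: this predicts $D_{\gamma'}$ occurs exactly once in the top layer (giving $a_{\gamma,\gamma}=1$) and with multiplicity $\#\mathfrak{S}_n.\gamma'=\ell(\gamma')$ whenever $|\gamma'|<|\gamma|$, while $V$ cannot raise the enhancement degree, so all remaining multiplicities vanish.

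The main obstacle will be this Cartan invariant computation. Part (1) reduces to a combinatorial parameterization inside Green's basis, and the one-block statement is a connectivity argument in the Ext-quiver; but pinning down the precise multiplicity $\ell(\gamma')$ demands synthesizing the DDHA--$\scrl$ duality with a careful tracking of the unipotent radical's action on PIMs. It is here that the branching duality formula developed earlier in the paper becomes indispensable: without it, character considerations alone give only bounds rather than the exact values.
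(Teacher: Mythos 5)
Your part (1) is essentially the paper's route (cut $\calp(n,r)$ out of $S(n+1,r)$ by the support condition ``row letter $n+1$ forces column letter $n+1$'', then count), but it elides the step that actually carries the weight. The triangularity condition only gives the easy inclusion: since every $h\in\uG\rtimes\Gm$ has vanishing $(n+1,q)$-entries for $q\le n$, each $\Phi(h)$ is supported on pairs in $E$, so $\calp(n,r)$ sits inside the span of the $\xi_{\tilde\pi_\bs,\bj}$. The converse --- that each \emph{individual} $\xi_{\tilde\pi_\bs,\bj}$ lies in $\bbc\Phi(\uG\rtimes\Gm)$ --- is not definitional: the group images $\Phi(h)$ are long linear combinations of basis elements, not matrix units, so ``closedness under multiplication is a routine matrix-unit calculation'' presupposes exactly what has to be proved, namely that matrix units can be extracted from the $\Phi(h)$ at all. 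The paper spends Lemma \ref{general GL lem} (a Vandermonde-type separation argument recovering each $\xi_{\bp,\bq}$ from finitely many group elements), Lemma \ref{gen V} (the analogous separation for the $\Theta_\bs$ coming from the unipotent elements $e^v$), Lemma \ref{diag gp}, and then the explicit factorization $\mathbf{x}\mathbf{y}=(c+1)\mathbf{z}$ with $\mathbf{x}\in\scrl(n,r)$, $\mathbf{y}\in\Omega(n,r)$ in the proof of Theorem \ref{enh sch stru}(2) on precisely this point; your proposal contains no substitute for it. A further caution on the count: basis elements of $S(n+1,r)$ are indexed by $\fsr$-orbits, so for a fixed row $\tilde\pi_\bs$ the admissible columns must be counted modulo the stabilizer of the row, which is a Young subgroup depending on $\bs$, not modulo the full symmetric group; counting ``row multisets times column multisets'' independently, as you do, does not follow from the orbit parameterization and needs its own justification.

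In part (2), your identification of the semisimple quotient with $\scrl(n,r)\cong\bigoplus_{l}S(n,l)$, hence of the simples with $\bigcup_{l}\Lambda^+(n,l)$, is fine and essentially equivalent to the paper's idempotent/PIM analysis, and the one-block statement would follow either from your Ext-connectivity sketch or, as in the paper, from positivity of the subdiagonal Cartan invariants. The genuine gap is the Cartan computation itself. The branching duality formula (Corollary \ref{coro bran d f}) is a statement about dimensions of symmetric-group irreducibles occurring in the $(\GL_n\times\Gm,\calh_r)$-decomposition of $\uvtr$; it carries no information about composition multiplicities of projective $\calp(n,r)$-modules, and in particular cannot produce the Weyl-group orbit sizes $\ell(\gamma')=\#\mathfrak{S}_n.\gamma'$, which are not of the shape ${r\choose l}\dim S^{\lambda}_l$ or of any interlacing sum appearing in that formula. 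What the paper actually uses is basis-level and elementary: $P_\gamma=\xi_{\tpis,\tpis}\calp(n,r)$ has the unique maximal submodule $Q_{\tpis}$, which is filtered by the $\un$-rank of the column index, each layer is identified in Lemma \ref{Structure of Q} with the smaller Schur algebra $S(n,l-i)$ regarded as a module, and the factor $\ell(\gamma')$ comes from the decomposition (\ref{Schur decomp}) of that algebra into right ideals $\xi_{\lambda\lambda}S(n,l-i)$ grouped along $\mathfrak{S}_n$-orbits, while the top layer contributes the single copy of $D_\gamma$. Your plan to pass to the associated graded as an $\scrl(n,r)$-module is the right first move, but the multiplicities must then be read off from this explicit identification of the layers; invoking the DDHA branching duality at that point does not close the argument, so as written the decisive step fails.
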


With aid of the structural description of the Levi subalgbra $\scrl(n,r)$ of the parabolic Schur algebra $\calp(n,r)$, we establish a duality between DDHA and $\GL_n\times\Gm$
in $\End_\bbc((\bbc^{n+1})^{\otimes r})$ and its consequence a ``duality branching formula" in dimensions. As its application, we finally obtain the  Levi/Parabolic Schur-Weyl duality stated below.

\begin{theorem} For $\uV=\bbc^{n+1}$,  denote by $(\uvtr, \Phi)$ the tensor representations of $\GL_{n+1}=\GL(\uV,\bbc)$,  and by $(\uvtr,\Psi)$ the permutation representations of the symmetric group $\fsr$.

{\rm(1)}  (Theorem \ref{res swd}) Let $\calh_r$ be the degenerate double Hecke algebra associated with $\fsr$. Then the following Levi Schur-Weyl duality holds.
\begin{align*}
 \End_{\bbc\Phi(\GL_n\times \Gm)}(\uvtr)&=\Xi(\calh_r);\cr
 \End_{\Xi(\calh_r)}(\uvtr)&=\bbc\Phi(\GL_n\times \Gm).
  \end{align*}

{\rm(2)} (Corollary \ref{coro bran d f})  For $\lambda\in \Lambda^+(n,l)$ (a partition of $l$ with $n$ parts),  denote by $S^\lambda_l$ the irreducible module of $\fsl$ corresponding to $\lambda$.
The following dimension formula can be regarded as  a duality to the classical branching rule. For   $l\in \{1,2,\ldots,r\}$, and $\lambda\in \Lambda^+(n,l)$,

\begin{equation*}
\sum\limits_{\overset{\mu\in \Lambda^+(n+1,r)}{\lambda \text{ interlaces }\mu}}\dim S^{\mu}_r={r\choose l}\dim S^{\lambda}_l
\end{equation*}

{\rm(3)}  (Theorem \ref{enh sch thm}) $\End_{\bbc\Phi(\uG\rtimes\Gm)}(\uvtr)=\Xi(\calh_r)^V$ for $G=\GL(V)$.
\end{theorem}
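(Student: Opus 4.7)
The plan is to reduce the claim to the Levi Schur--Weyl duality of part (1) by exploiting the Levi decomposition of the parabolic $P := \uG\rtimes\Gm \subset \GL_{n+1}$. Writing $P = L \ltimes U$ with Levi factor $L = \GL_n\times\Gm$ and unipotent radical $U \cong V$, each element of $P$ factors as $\ell u$ with $\ell\in L$, $u\in U$, so as subsets of $\End_\bbc(\uvtr)$ one has $\Phi(P) = \Phi(L)\Phi(U)$.

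From this presentation, an elementary observation yields
$$\End_{\bbc\Phi(P)}(\uvtr) \;=\; \End_{\bbc\Phi(L)}(\uvtr) \,\cap\, \End_{\bbc\Phi(U)}(\uvtr).$$
Indeed, any $f$ commuting with both $\Phi(L)$ and $\Phi(U)$ commutes with every product $\Phi(\ell)\Phi(u) = \Phi(\ell u)$, while the reverse inclusion is clear since $L, U\subset P$. Substituting Theorem part (1) for the first intersectand gives
$$\End_{\bbc\Phi(P)}(\uvtr) \;=\; \{\,f\in\Xi(\calh_r)\ :\ f\circ\Phi(v)=\Phi(v)\circ f\ \text{for every}\ v\in V\,\}.$$

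To conclude, I would match this intersection with the paper's notation $\Xi(\calh_r)^V$. Because $V$ normalizes $L$ in $P$, conjugation by $\Phi(V)$ preserves the commutant $\Xi(\calh_r)$ and defines a natural $V$-action on this subalgebra whose fixed points are precisely the elements commuting with $\Phi(V)$. Thus the right-hand side above coincides with $\Xi(\calh_r)^V$, completing the proof.

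The main (and rather mild) obstacle is notational: one has to confirm that the superscript $(-)^V$ indeed denotes the $V$-commutant in $\Xi(\calh_r)$, equivalently the fixed subalgebra under the conjugation $V$-action. Once this identification is in place, the result is a short formal consequence of part (1) and requires no further representation-theoretic input. If a more concrete description is desired, one may additionally compute the $V$-action on the generators of $\Xi(\calh_r)$ (the transpositions $s_i$ and the operators $x_\sigma$ with $\sigma\in\fsl$, $l\leq r$) and extract a basis of $\Xi(\calh_r)^V$ explicitly.
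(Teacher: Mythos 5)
Your proposal establishes only part (3) of the statement, and it does so by \emph{assuming} part (1); parts (1) and (2) are never proved, and they are where the real work lies. For part (3) your reduction is in fact exactly the paper's route: Theorem \ref{enh sch thm} is deduced in one line from Theorem \ref{res swd}, since $\calp(n,r)=\bbc\Phi(\uG\rtimes\Gm)$ is generated by $\bbc\Phi(\GL_n\times\Gm)$ together with the operators $\Phi(e^v)$, so the commutant of $\calp(n,r)$ is the intersection of $\Xi(\calh_r)=D(n,r)$ with the commutant of $\Phi(V)$, which is by definition $D(n,r)^V=\{\phi\in D(n,r)\mid \Phi(e^v)\circ\phi\circ\Phi(e^v)^{-1}=\phi,\ \forall v\in V\}$. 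Note that your auxiliary claim that ``$V$ normalizes $L$'' is false (it is the Levi $L$ that normalizes the unipotent radical $V$, not conversely), and it is also unnecessary: the fixed-point set $\Xi(\calh_r)^V$ is defined inside $\Xi(\calh_r)$, so no statement about conjugation preserving the commutant is needed.

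The genuine gap is parts (1) and (2). Part (1) is a double-centralizer theorem between $\bbc\Phi(\GL_n\times\Gm)$ and the image $\Xi(\calh_r)$ of the degenerate double Hecke algebra, and it cannot be obtained formally; the paper proves it by showing first that any $\phi\in\End_{D(n,r)}(\uvtr)$ lies in $\bbc\Phi(\GL(\uV))=S(n+1,r)$ (classical Schur--Weyl duality applied to the subalgebra $\bbc\Psi(\fsr)\subset D(n,r)$), then that commutation with the idempotents $x^{\ul}_e$ forces $\phi$ to stabilize each graded piece $\uvtr_l$ and each summand $\uvtr_{\ul}\cong V^{\otimes l}\otimes\eta^{\otimes r-l}$, where the classical duality for $\GL(V)$ and $\fsl$ identifies $\phi_l$ with an element of $\scrl(n,r)_l$ (using the structural Lemma \ref{gen G}); the reverse equality $\End_{\scrl(n,r)}(\uvtr)=D(n,r)$ then follows from the double commutant theorem, which requires the complete reducibility of $\uvtr$ over $\calh_r$ established in Lemma \ref{ddha decomp l} and Corollary \ref{ddha decomp full}. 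Part (2) is likewise untouched in your proposal: it follows by computing the $\GL_n$-isotypic multiplicities of $(\bbc^{n+1})^{\otimes r}$ in two ways, once through the $\GL_n\times\calh_r$ decomposition coming from part (1), and once through the classical $(\GL_{n+1},\fsr)$ duality combined with the branching rule for $L^\mu_{n+1}$ restricted to $\GL_n$, and equating coefficients. So as written the proposal is a correct but essentially verbatim rendering of the paper's one-line deduction of (3) from (1), and it does not constitute a proof of the theorem as stated.
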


It is worth mentioning that the study of invariants beyond reductive groups is a challenge (see \cite{Gr1}-\cite{Gr3}).  Consequently the invariant property of $\uG\rtimes\Gm$ as above has its own interest.

In the same spirit, one can consider more general case for Levi- and parabolic subgroups of $\GL_{n+1}$.  

\vskip5pt
\subsection{} Our paper is organized as follows. In the first section, we introduce some basic notions and notations for semi-reductive groups. Then we give some fundamental properties of semi-reductive groups. In the second section, we introduce the Levi/Parabolic Schur algebras, and investigate their structure. In the third section, we introduce the degenerate double Hecke algebras and demonstrate their representation meaning in the enhanced tensor products.
The fourth section is devoted to the proof of Theorem \ref{res swd} and Corollary \ref{coro bran d f}. With the above, in the fifth section  we  first prove the Levi and parabolic Schur Weyl dualities and then give some tensor invariants. In the concluding section, we investigate the representations of the parabolic Schur algebras, obtaining the results on the blocks and Cartan invariants.

\vskip5pt
\subsection{} As to other aspects, there will be some investigations somewhere else (to see \cite{LW}, \cite{SXY}, \cite{XZ}, {\sl{etc}}.). The parabolic Schur algebras can be defined in prime characteristic. Their modular representations will be an interesting topic in the future. It is worth mentioning that the nilpotent cone of $\underline{\gl(V)}:=\Lie(\uG)$ for $G=\GL(V)$ is the same as the enhanced nilpotent cone studied by Achar-Hendersen in \cite{AH}. Adjoint nilpotent orbits of $\uG$ in $\underline{\gl(V)}$ are compatible with enhanced nilpotent orbits studied in \cite{AH}. With respect to those, there are some interesting phenomenon parallel to the $\GL(V)$-theory in prime characteristic, including Jantzen's realization of support varieties of Weyl modules by the closures of some nilpotent orbits (see \cite{Jan2}).

\section{Semi-redutive groups and semi-reductive Lie algebeas}
In this section, all vector spaces and varieties are  over a field $\bbf$ which stands for either the complex number field $\bbc$, or an algebraically closed field $\bk$ of characteristic $p>0$. 
\subsection{Notions and notations}
\begin{definition}
An algebraic group $G$ over $\bbf$ is called semi-reductive if $G=G_0\ltimes U$ with $G_0$ being a reductive subgroup, and $U$ the unipotent radical. Let $\ggg=\Lie(G)$, and $\ggg_0=\Lie(G_0)$ and $\uuu=\Lie(U)$, then $\ggg=\ggg_0\oplus\uuu$.
\end{definition}

\begin{example}\label{ex3} (Enhanced reductive algebraic groups) Let $G_0$ be a connected reductive algebraic group over $\bbf$, and $(M,\rho)$ be a finite-dimensional rational representation of $G_0$ with representation space $M$ over $\bbf$.    Consider the product variety $G_0\times M$. Regard $M$ as an additive algebraic group. The variety $G_0\times M$ is endowed with an enhanced cross product structure denoted by $G_0\times_\rho M$,  by defining for any $(g_1,v_1), (g_2,v_2)\in  G_0\times M$
\begin{align}\label{enhanced product}
(g_1,v_1)\cdot (g_2,v_2):=(g_1g_2, \rho(g_1)v_2+v_1).
\end{align}
Then by a straightforward computation it is easily known that  $\underline{G_0}:=G_0\times_\rho M$ becomes a group with identity $(e,0)$ for the identity $e\in G_0$, and $(g,v)^{-1}=(g^{-1}, -\rho(g)^{-1}v)$.
 And $G_0\times_\rho M$ has a subgroup $G_0$ identified with $(G_0, 0)$ and a subgroup $M$ identified with  $(e, M)$.
Furthermore,  $\underline{G_0}$ is connected since $G_0$ and $M$ are irreducible varieties. We call $\underline{G_0}$ \textsl{ an enhanced reductive algebraic group associated with the representation space $M$}. What is more, $G_0$ and $M$ are closed subgroups of $\underline{G_0}$, and $M$ is a normal closed subgroup. Actually, we have $(g,w)(e,v)(g,w)^{-1}=(e,\rho(g)v)$ for any $(g,w)\in G_0$. From now on,   we will write down $\dot g$ for $(g,0)$ and $e^v$ for $(e,v)$ unless other appointments. It is clear that $e^{v}\cdot e^{w}=e^{v+w}$ for $v,w\in V$.

Suppose $\underline{\ggg_0}=\Lie(\uGz)$. Then  $(M, \mathsf{d}(\rho))$ becomes a representation of $\ggg_0$.  Naturally, $\Lie(\underline{G_0})=\ggg_0\oplus M$, with Lie bracket
 $$[(X_1,v_1),(X_2,v_2)]:=([X_1,X_2], \mathsf{d}(\rho)(X_1)v_2-\mathsf{d}(\rho)(X_2)v_1),$$
 which is called an enhanced reductive Lie algebra.

Clearly, $\uGz$ is a semi-reductive group with $M$ being the unipotent radical.
\end{example}

\section{Enhanced tensor representations and parabolic Schur algebras}

 From now on,  the ground field $\bbf$ will be $\bbc$. We suppose $G=\GL(V)$ over $\bbc$, and suppose that $\underline G:=G\times_\nu V$ is an enhanced group of $G$ associated with the natural representation $\nu$ on $V$. All representations for algebraic groups are always assumed to be rational ones.

\subsection{Enhanced natural modules}   An irreducible $G$-module becomes naturally an irreducible module of $\underline{G}$ with trivial $V$-action. The isomorphism classes of irreducible rational representations of $\uG$ coincide with the ones of $G$.

Denote by $\underline{V}$ the one-dimensional extension of $V\cong \bbc^n$ via the vector $\eta$, i.e. $\underline{V}=V\oplus \bbc \eta\cong \bbc^{n+1}$. We will call $\uV$ the enhanced space (of $V$).
Naturally  $\uV$  becomes a $\uG$-module which is defined for any $\ug:=(g,v)\in \uG$ and $\uu=u+a\eta\in \uV$ with $g\in G$, $v,u\in V$ and $a\in \bbc$, via
\begin{align}\label{enhanced action}
\ug.\uu=\nu(g)u+av+a\eta.
\end{align}
It is not hard to see that this module is a rational module of  $\uG$. The corresponding rational representation of $\uG$ is denoted by $\underline{\nu}$, which gives rise to a short exact sequence of $\uG$-modules
\begin{align}\label{short exact}
V\hookrightarrow \uV \twoheadrightarrow \bbc
\end{align}
where $\bbc$ means the one-dimensional trivial $\uG$-module. The following fact is clear.

\begin{lemma}\label{sub} The enhanced reductive algebraic group $\underline{\GL(V)}$ is a closed subgroup of $\GL(\uV)$.
\end{lemma}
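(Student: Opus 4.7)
The plan is to realize $\underline{G}$ explicitly as a set of block matrices and then invoke the standard fact that a regular injective homomorphism of algebraic groups onto a Zariski-closed subset is a closed embedding. Fix an ordered basis of $V$ together with $\eta$ to identify $\underline{V}\cong\bbc^{n+1}$. Writing vectors of $\underline{V}$ as columns $\binom{u}{a}$ with $u\in V$ and $a\in\bbc$, the formula (\ref{enhanced action}) for the action of $\ug=(g,v)$ reads
\begin{equation*}
\ug.\binom{u}{a}=\binom{\nu(g)u+av}{a}=\begin{pmatrix}\nu(g)&v\\ 0&1\end{pmatrix}\binom{u}{a}.
\end{equation*}
Accordingly, define
\begin{equation*}
\phi:\uG\longrightarrow\GL(\uV),\qquad \phi(g,v)=\begin{pmatrix}\nu(g)&v\\ 0&1\end{pmatrix}.
\end{equation*}

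The first step is to verify that $\phi$ is a homomorphism of abstract groups. A direct block multiplication gives
\begin{equation*}
\phi(g_1,v_1)\phi(g_2,v_2)=\begin{pmatrix}\nu(g_1g_2)&\nu(g_1)v_2+v_1\\ 0&1\end{pmatrix}=\phi\bigl((g_1,v_1)\cdot(g_2,v_2)\bigr),
\end{equation*}
which matches exactly the enhanced product (\ref{enhanced product}). Injectivity is immediate from reading off $g$ from the upper-left block and $v$ from the last column. Moreover $\phi$ is visibly a morphism of algebraic varieties: its matrix entries are polynomial functions of the coordinates of $g\in G\subset\mathrm{Mat}_n(\bbc)$ and $v\in V$.

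Next I would describe the image. Let $P\subset\GL(\uV)$ be the subset consisting of matrices $A=(a_{ij})_{1\le i,j\le n+1}$ that satisfy
\begin{equation*}
a_{n+1,\,j}=0\ \ \text{for}\ j\le n,\qquad a_{n+1,\,n+1}=1.
\end{equation*}
These are Zariski-closed conditions (vanishing of polynomial functions on $\mathrm{Mat}_{n+1}(\bbc)$), and their intersection with $\GL(\uV)$ is therefore closed in $\GL(\uV)$. One checks immediately that $\phi(\uG)=P\cap\GL(\uV)$: the conditions above force the last row to be $(0,\dots,0,1)$, after which the upper-left $n\times n$ block must lie in $\GL(V)$ (since the matrix is invertible) and the remaining column is an arbitrary vector of $V$.

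Finally, to conclude that $\uG$ is a closed subgroup of $\GL(\uV)$, one verifies that $\phi$ is an isomorphism of algebraic varieties onto its image. The inverse $\phi^{-1}:P\cap\GL(\uV)\to\uG$ sends a matrix to the pair consisting of its upper-left $n\times n$ block and its last column (truncated), which are polynomial in the matrix entries, so $\phi^{-1}$ is a morphism as well. Hence $\phi$ is a closed immersion, identifying $\uG$ with the closed subgroup $P\cap\GL(\uV)$ of $\GL(\uV)$. There is no real obstacle here beyond the bookkeeping of the matrix description; the only point that deserves attention is checking that the image is cut out by closed conditions (which it is, as indicated).
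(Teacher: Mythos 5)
Your proof is correct and matches exactly what the paper has in mind: the paper states this lemma without proof ("The following fact is clear"), relying implicitly on the very block-matrix realization $\ug\mapsto\left(\begin{smallmatrix}\nu(g)&v\\ 0&1\end{smallmatrix}\right)$ that you spell out, which the paper itself uses later when it writes $e^v$ as an explicit $(n+1)\times(n+1)$ matrix. Your write-up simply makes the closed-embedding verification explicit, so there is nothing to correct.
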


\subsection{Classical Schur-Weyl duality}
Recall the classical Schur-Weyl duality. The natural representation of $\GL(V)$ on $V$ gives rise to a $G=\GL(V)$-module on the tensor product $\vtr$ for any given positive integer $r$,  with diagonal $g$-action for any $g\in G$. The corresponding representation is denoted by $\phi$. This means
$$\phi(g)(v_1\otimes v_2\otimes\cdots\otimes v_r)=\nu(g)v_1\otimes\nu(g)v_2\otimes\cdots\otimes \nu(g)v_r$$
for any $g\in \GL(V)$ and any monomial tensor product $v_1\otimes\cdots\otimes v_r\in \vtr$.

In the meanwhile, $\vtr$ naturally becomes  a $\fsr$-module with permutation action. The corresponding $\fsr$-representation on $\vtr$ is denoted by $\psi$.  This means that for any $\sigma\in \fsr$,
  \begin{align}\label{sym action on tensor}
  \psi(\sigma)(v_1\otimes v_2\otimes\cdots\otimes v_r)=v_{\sigma^{-1}(1)}\otimes v_{\sigma^{-1}(2)}\otimes\cdots \otimes v_{\sigma^{-1}(r)}.
  \end{align}

 The classical Schur-Weyl duality shows that the images of $\phi$ and $\psi$ are double centralizers in $\End_\bbc(\vtr)$, i.e. for $G=\GL(V)$
\begin{align}\label{sw duality}
&\End_{G}(\vtr) =\bbc\psi(\fsr)\cr
 & \End_{\fsr}(\vtr)             =\bbc\phi(G)
\end{align}
Here $\bbc\psi(\fsr)$ and $\bbc\phi(G)$ stand for the subalgebras of $\End_\bbc(\vtr)$ generated by $\psi(\fsr)$ and $\phi(G)$ respectively.

\subsection{Enhanced tensor representations} Now consider the $r$th tensor product $\uvtr$ for a fixed positive integer $r$, which becomes a $\uG$-modules by diagonal action.

From the classical Schur-Weyl duality (see (\ref{sw duality})), $\vtr$ has the following decomposition as $\GL(V)\times \fsr$-modules
\begin{align}\label{tensor dcp}
 \vtr= \bigoplus_{\lambda}L^\lambda\otimes D^\lambda
\end{align}
where $\lambda$ in the sum  ranges over the set of partitions of $r$ with $n$ parts (zero parts are allowed), both $L^\lambda$ and $D^\lambda$ are the irreducible highest weight module of $\GL(V)$ and the irreducible module of $\fsr$ respectively, associated with the partition  $\lambda$ (see \cite{GW}).

We now consider the composition factors of $\uvtr$ as a $\uG$-module.
Keeping  (\ref{short exact}) and (\ref{tensor dcp}) in mind, one easily has the following.

\begin{proposition} As a $\uG$-module, the following formula holds in the Grothendieck group of the rational $\uG$-module category
$$[\uvtr]=\sum_{\mu} c_\mu [L^\mu]$$
where the sum is over partitions $\mu$ of $l\in \{0,1,\ldots,r\}$ of length $\leq n$, and $$c_\mu={r\choose l}\dim S^\mu_l.$$

\end{proposition}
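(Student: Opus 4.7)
The plan is to reduce the computation to a Grothendieck-group identity produced by the short exact sequence (\ref{short exact}), and then to invoke the classical Schur--Weyl decomposition (\ref{tensor dcp}) factor by factor.

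First I would observe that although (\ref{short exact}) does \emph{not} split as $\uG$-modules (the unipotent radical $V$ acts nontrivially on $\eta$ via $e^{v}.\eta = v+\eta$, so $\bbc\eta$ is not stable), it does give the relation $[\uV]=[V]+[\bbc]$ in the Grothendieck group of rational $\uG$-modules. Since tensoring with a fixed module over $\bbc$ is exact, this relation is multiplicative: tensoring (\ref{short exact}) successively with $\uV$ in each of the $r$ factors yields a filtration of $\uvtr$ whose associated graded is $\bigoplus_{S\subseteq\{1,\ldots,r\}} V_S$, where $V_S$ is the tensor product of $V$'s in the positions $S$ and trivial $\bbc$'s elsewhere. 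As a $\uG$-module each $V_S$ is isomorphic to $V^{\otimes |S|}$, so grouping summands by $l=|S|$ gives
\begin{align*}
[\uvtr]\;=\;\sum_{l=0}^{r}\binom{r}{l}\,[V^{\otimes l}].
\end{align*}

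Next I would apply the classical Schur--Weyl decomposition (\ref{tensor dcp}) to each $V^{\otimes l}$. By the paragraph preceding the statement, irreducible rational $\uG$-modules are precisely the irreducible $\GL(V)$-modules inflated through the projection $\uG\twoheadrightarrow G$, so $L^\mu$ remains irreducible for $\uG$ and $[V^{\otimes l}]=\sum_{\mu}(\dim S^\mu_l)\,[L^\mu]$, where $\mu$ ranges over partitions of $l$ of length at most $n$. Substituting and collecting coefficients over all $l\in\{0,1,\ldots,r\}$ yields
\begin{align*}
[\uvtr]\;=\;\sum_{l=0}^{r}\sum_{\mu\vdash l,\,\ell(\mu)\leq n}\binom{r}{l}(\dim S^\mu_l)\,[L^\mu]\;=\;\sum_{\mu} c_\mu\,[L^\mu],
\end{align*}
which is the claimed formula.

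The only point requiring care is the first step: one must verify that the binomial expansion really does compute the Grothendieck class of $\uvtr$ despite the nontriviality of the extension (\ref{short exact}). This is not an obstacle in substance, but it is the place where a quick ``$\uV = V\oplus\bbc$'' shortcut would be incorrect; what legitimises it is the additivity of $[\,\cdot\,]$ on short exact sequences together with the exactness of $-\otimes M$ over $\bbc$. Once this observation is in place, the rest is a direct combination with the classical duality, and the length restriction $\ell(\mu)\leq n$ is inherited automatically from (\ref{tensor dcp}).
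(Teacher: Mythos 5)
Your argument is correct and is exactly the route the paper intends: additivity of the Grothendieck class over the non-split sequence (\ref{short exact}) gives $[\uvtr]=\sum_{l}\binom{r}{l}[V^{\otimes l}]$, and the classical decomposition (\ref{tensor dcp}) applied in degree $l$, together with the fact that irreducible $G$-modules inflate to irreducible $\uG$-modules, yields the stated coefficients. Your extra care in distinguishing the Grothendieck identity from a (false) module-theoretic splitting $\uV\cong V\oplus\bbc$ is a welcome clarification of the paper's one-line justification.
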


\subsection{A preliminary to enhanced Schur-Weyl dualities}
Denote by $\Phi$ the representation of $\uG$ on the $r$-tensor product module $\uvtr$. Then the following question is naturally raised.

\begin{question}\label{ques} What is the centralizer of $\bbc\Phi(\uG)$ in $\End_\bbc(
\uvtr)$? What role does $\fsr$ play  in the enhanced case as in the classical Schur-Weyl duality?
\end{question}

It is not trivial to give a complete answer which is left till \S\ref{sec 5} (see Theorem \ref{enh sch thm}). Here we give some preliminary investigation.
Note that $\GL(\uV)$  contains a closed subgroup $\uG$ (Lemma \ref{sub}).  According to the classical Schur-Weyl duality, $\GL(\uV)$ and $\fsr$ are dual in the sense of the double centralizers in $\End_\bbc(\uvtr)$ (we will still denote by $\Phi$ the natural representation of $\GL(\uV)$ on $\uvtr$ if the context is clear). So the significant investigation of Question \ref{ques}  is to decide the centralizer of $\bbc\Phi(\uG)$ in $\End_\bbc(\uvtr)$.
 For this, we first have the following observation

\begin{lemma} \label{restriction}
 For any $\xi\in \End_{\uG}(\uvtr)$, $\xi(\vtr)\subset \vtr$.
\end{lemma}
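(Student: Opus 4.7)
The plan is to exhibit $\vtr$ as a weight subspace of $\uvtr$ under the central torus $Z := \bbc^\times \cdot \id_V$ of $G = \GL(V)$, which sits inside $\uG$. Since any $\uG$-equivariant endomorphism commutes in particular with $Z$, it must respect the resulting weight decomposition, and $\vtr$ will be identified as a single weight summand.

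To make this precise, I would first note that $Z$ embeds into $\uG$ via $\alpha \mapsto \dot\alpha = (\alpha \cdot \id_V, 0)$, and read off its action on $\uV$ from (\ref{enhanced action}): for $\uu = u + a\eta$ with $u \in V$, one has $\dot\alpha \cdot \uu = \alpha u + a\eta$, so $Z$ scales $V$ with weight $1$ and fixes $\bbc\eta$ pointwise. Fixing a basis $\{e_1, \ldots, e_n\}$ of $V$ and considering the induced monomial basis of $\uvtr$ (with entries in $\{e_1, \ldots, e_n, \eta\}$), the diagonal action of $\dot\alpha$ scales each monomial by $\alpha^k$, where $k$ counts the tensor positions occupied by some $e_i$ rather than by $\eta$. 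This yields a weight decomposition
\[
 \uvtr = \bigoplus_{k=0}^{r} (\uvtr)_k,
\]
and a direct inspection shows that $(\uvtr)_r = \vtr$, since weight $r$ forces every tensor position to lie in $V$.

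Finally, because $\xi$ is $\uG$-equivariant, it commutes with $Z \subset \uG$ and hence preserves each weight space $(\uvtr)_k$. Applying this with $k = r$ gives $\xi(\vtr) \subset \vtr$, as required. The argument is short and essentially formal once the weight decomposition is in place; the only point that needs care is the verification that $Z$ embeds into $\uG$ compatibly with the enhanced product structure and acts on $\uV$ as described, and both facts are immediate from (\ref{enhanced action}). Accordingly, I do not anticipate any substantive obstacle.
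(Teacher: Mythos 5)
Your proof is correct, but it runs along a genuinely different line from the paper's. You exploit the reductive direction: the central torus $Z=\bbc^\times\cdot\id_V\subset \GL(V)\subset\uG$ acts on $\uV$ with weight $1$ on $V$ and weight $0$ on $\bbc\eta$ (immediate from (\ref{enhanced action})), so $\uvtr$ splits into eigenspaces graded by the number of tensor factors lying in $V$, with $\vtr$ the top piece; any endomorphism commuting with $Z$ (a fortiori any $\uG$-equivariant one) preserves each piece since the characters $\alpha\mapsto\alpha^k$ are pairwise distinct. The paper instead uses only the unipotent direction: it conjugates by the translations $e^v=(e,v)$, notes $\Phi(e^v)w=w$ for $w\in\vtr$, expands $\Phi(e^v)\xi(w)$ in a minimal-length expression, and lets the arbitrariness of $v$ force all $\eta$-coefficients to vanish. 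Each approach buys something: yours is shorter and more formal, needs only $Z$-equivariance, and in fact shows that every graded piece $(\uvtr)_k$ is preserved --- this grading is exactly the decomposition $\uvtr=\bigoplus_l\uvtr_l$ the paper introduces later in \S\ref{un rank}, so your argument anticipates structure that gets reused; the paper's argument shows that equivariance under the unipotent radical $e^V$ alone already suffices, which is closer in spirit to the later analysis of $\uG$-invariants where the $V$-action is the essential new ingredient. Both proofs are complete as stated; the only point to be explicit about in yours is that one fixed $\alpha$ of infinite order (or the distinctness of the characters $\alpha^k$, $k=0,\dots,r$) is what identifies the eigenspaces and makes them $\xi$-stable, which you have effectively said.
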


\begin{proof} For any nonzero $w\in \vtr$, we want to show that $\xi(w)\in \vtr$. We might as well suppose $\xi(w)\ne 0$. Then we write
\begin{equation}\label{expression of xi(w)}
\xi(w)=\sum_{i=1}^t \uu_{i1}\otimes\uu_{i2}\otimes \cdots \otimes \uu_{ir}\in \uvtr
\end{equation}
with $\uu_{ij}=u_{ij}+a_{ij}\eta$, where $u_{ij}\in V$, and $a_{ij}\in \bbc$, and
the number $t$ of summands in (\ref{expression of xi(w)}) is minimal among all possible expression of $\xi(w)$.

By the assumption, $\xi(\Phi(\ug) w)=\Phi(\ug)\xi(w)$ for any $\ug:=(g,v)\in\uG$ with $g\in G$ and $v\in V$. In particular, we take some special element $(e,v)\in \uG$, denoted by  $e^v$. Then we have an equation
 \begin{align}\label{ev equ}
 \xi(\Phi(e^v)w)=\Phi(e^v)\xi(w).
 \end{align}
  By (\ref{enhanced action}), $\underline{\nu}(e^v)(\uu_{ij})=u_{ij}+a_{ij}v+a_{ij}\eta$. So
 we have
 \begin{align*}
\Phi(e^v)\xi(w)=\sum_{i} (u_{i1}+a_{i1}v+a_{i1}\eta)\otimes\cdots\otimes (u_{ir}+a_{ir}v+a_{ir}\eta).
 \end{align*}
On the other hand, by (\ref{enhanced action}) again we have $\underline{\nu}(e^v)u=u$ for any $u\in V$. Hence, $\Phi(e^v)w=w$.
We have
\begin{align*}
\xi(\Phi(e^v)w)&=\sum_{i} \uu_{i1}\otimes \cdots \otimes \uu_{ir}.
\end{align*}
By comparing both sides of (\ref{ev equ}), the arbitrariness of $v$ leads to all $a_{ij}$ being equal to zero.   The proof is completed.
\end{proof}

\begin{proposition}\label{enhanced swty}
 Denote by $\Psi$ the permutation representation of $\fsr$ on $\uvtr$ defined as in (\ref{sym action on tensor}). The following statements.
  \begin{itemize}
 \item[(1)] $\bbc\Psi(\fsr)$ is a subalgebra of  $\End_{\uG}(\uvtr)$.
\item[(2)] There is a surjective homomorphism of algebras
$$\End_{\uG}(\uvtr)\twoheadrightarrow\bbc\Psi(\fsr).$$
\end{itemize}
\end{proposition}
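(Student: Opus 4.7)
Proof plan:

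Part (1) will follow by a direct computation on simple tensors. For any $\sigma \in \fsr$ and $\ug \in \uG$, I evaluate both $\Phi(\ug)\Psi(\sigma)$ and $\Psi(\sigma)\Phi(\ug)$ on a monomial $\uu_1 \otimes \cdots \otimes \uu_r$; since $\Phi(\ug)$ acts diagonally as $\underline{\nu}(\ug)^{\otimes r}$ while $\Psi(\sigma)$ permutes tensor factors, both compositions yield $\underline{\nu}(\ug)\uu_{\sigma^{-1}(1)} \otimes \cdots \otimes \underline{\nu}(\ug)\uu_{\sigma^{-1}(r)}$. This immediately gives $\bbc\Psi(\fsr) \subseteq \End_\uG(\uvtr)$.

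For Part (2), the central tool will be Lemma \ref{restriction}: every $\xi \in \End_\uG(\uvtr)$ restricts to an endomorphism of $\vtr$. Because $V\subseteq \uG$ acts trivially on $\vtr$ (indeed, $\underline{\nu}(e^v)u = u$ for $u\in V$, as already exploited in the proof of Lemma \ref{restriction}), the restricted map $\xi|_{\vtr}$ is $G$-equivariant for $G=\GL(V)$. Invoking the classical Schur-Weyl duality (\ref{sw duality}) to identify $\End_G(\vtr) = \bbc\psi(\fsr)$, I will produce the algebra homomorphism
\[
\rho: \End_\uG(\uvtr)\longrightarrow \bbc\psi(\fsr),\qquad \xi\mapsto \xi|_{\vtr}.
\]
Surjectivity will be immediate from Part (1): the composite $\bbc\Psi(\fsr)\hookrightarrow \End_\uG(\uvtr)\xrightarrow{\;\rho\;}\bbc\psi(\fsr)$ is the canonical map $\Psi(\sigma)\mapsto \psi(\sigma)$, so the image of $\rho$ already exhausts $\bbc\psi(\fsr)$. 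Identifying $\bbc\psi(\fsr)$ with $\bbc\Psi(\fsr)$ via this canonical labelling by elements of $\fsr$ then yields the asserted surjection.

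The point I expect to require the most care is precisely this last identification of $\bbc\psi(\fsr)$ with $\bbc\Psi(\fsr)$. In general, the canonical epimorphism $\bbc\Psi(\fsr)\twoheadrightarrow \bbc\psi(\fsr)$ can have nontrivial kernel when partitions of $r$ with more than $\dim V = n$ parts arise, since the corresponding Young symmetrizers vanish on $\vtr$ but not on $\uvtr$. A clean reading of the proposition interprets the claimed surjection as the epimorphism $\rho$ composed with the identification of both permutation algebras through their common generating set $\fsr$; the sharper equality $\End_\uG(\uvtr)=\bbc\Psi(\fsr)$ will of course follow from the full enhanced Schur-Weyl duality established later in Theorem \ref{enh sch thm}.
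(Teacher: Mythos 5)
Your proposal is correct and essentially reproduces the paper's proof: you obtain part (1) by checking that the permutation operators commute with the diagonal $\uG$-action (the paper instead deduces it from classical Schur--Weyl duality for $\GL(\uV)\supset\uG$), and part (2) via Lemma \ref{restriction} together with $\End_G(\vtr)=\bbc\psi(\fsr)$, with surjectivity witnessed by the permutation operators from part (1) — exactly the explicit preimages the paper writes down. The caveat you raise about identifying $\bbc\psi(\fsr)$ with $\bbc\Psi(\fsr)$ when $r>n$ is a genuine point of looseness, but the paper's own proof makes the same silent identification, so your argument supplies everything it does.
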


\begin{proof} (1) Note that $\uG$ is a subgroup of $\GL(\uV)$ and then $\End_{\GL(\uV)}(\uvtr)\subset\End_{\uG}(\uvtr)$.  This statement follows from the classical Schur-Weyl duality with respect to $\GL(\uV)$ and $\fsr$.

 (2) By Lemma \ref{restriction}, we can define a map $\res$ from $\End_{\uG}(\uvtr)$ to $\End_{G}(\vtr)$ by sending $\xi$ to $\xi|_{\vtr}$.
Then it is easily seen that $\res$ is an algebra homomorphism. Furthermore, we assert that $\res$ is surjective. Actually, by the classical Schur-Weyl duality we have $\End_{G}(\vtr)=\bbc\Psi(\fsr)$. So any element of $\End_{G}(\vtr)$ can expressed as $\sum_{i}a_i\sigma_i$ which is a finite $\bbc$-linear combination of some $\sigma\in \fsr$. Take $\xi$ to be a morphism in $\End_\bbc(\uvtr)$ sending  $\uu_1\otimes\cdots \otimes\uu_r\in \uvtr$ to
$$ \sum_i a_i \uu_{\sigma_i^{-1}(1)}\otimes\cdots\otimes \uu_{\sigma_i^{-1}(r)}.$$
Then $\xi$ is $\uG$-equivariant. And $\res(\xi)$ is exactly $\sum_{i}a_i\sigma_i\in \End_{G}(\vtr)$. So the assertion is proved.
\end{proof}

\subsection{Levi and parabolic subalgebras of $S(n+1,r)$} \label{en sch sec}
 From now on, we always assume  $\dim V=n$.
 Set $\cala:= \End_\bbc(\uvtr)$, and $\cals=\bbc\Phi(\GL(\uV))$, $\cale:=\bbc\Phi(\uG)$ and $\calc:=\bbc\Psi(\fsr)$ for $\uG=\GL(V)\times_\nu V$. Call $\cale$ an enhanced Schur algebra which reflects the polynomial representations of degree $r$ for the enhanced reductive algebraic group $\uG$. Obviously, $\cale$ is a subalgebra of the semi-simple algebra $\cals$. 
Note that $\bbc\Phi(\GL(\uV))=\End_\bbc(\uvtr)^{\fsr}$ is actually the classical Schur algebra $S(n+1,r)$ (see \cite{Gr}). Lemma \ref{sub} still holds in any case. Correspondingly,   we denote $\cale$ by $\cale(n,r)$ more precisely.

By the above arguments, we have a sequence of the following subalgebras in $\cala$:
$$S(n,r)\subset \cale(n,r)\subset S(n+1,r). $$ Furthermore, we will have more interesting and related subalgebras: Levi Schur algebras and parabolic Schur algebras.

\subsubsection{} \label{classical sch} Let us first recall some facts on the classical Schur algebras. The readers refer to \cite{Gr} or \cite{DDPW} for the details.

For a given positive integer $m$, set $\um=\{1,2,\ldots,m\}$. Denote by $A(m,r)$ the space consisting of the elements expressible as polynomials which are homogeneous of degree $r$ in the polynomial function $c_{i,j}$ $(i,j\in\um)$ on $\GL(m,\bbc)$. Then  $A(m,r)$ has a basis (modulo the order of factors in the monomials)
$$c_{\bi,\bj}=c_{i_1j_1}c_{i_2j_2}\cdots c_{i_rj_r}$$
for $\bi=(i_1,\ldots,i_r), \bj=(j_1,\ldots,j_r)\in \um^r$. The symmetric group $\fsr$ acts on the left on $\um^r$ by $\sigma.\bi=(i_{\sigma^{-1}(1)},\ldots,i_{\sigma^{-1}(r)})$. Furthermore, $\fsr$ act also on the set $\um^r\times \um^r$ by $\sigma.(\bi,\bj)=(\sigma.\bi,\sigma.\bj)$. So we can define an equivalence relation $\sim$ on $\um^r\times \um^r$, $(\bi,\bj)\sim (\bk,\bl)$ if and only if $(\bk,\bl)=\sigma.(\bi,\bj)$ for some $\sigma\in \fsr$. The number of the equivalence classes (or the orbits) in $\um^r$ under such a $\fsr$-action is just $\sf{d}_{m,r}:={{m^2+r-1}\choose {r}}$. Then $A(m,r)$ becomes a coalgebra of dimension $\sf{d}_{m,r}$ with the coproduct $\delta$ and the counit $\varepsilon$ as below
\begin{align}
\Delta: A(m,r)&\rightarrow A(m,r)\otimes A(m,r), \cr
 c_{\bi,\bj}&\mapsto \sum_{\bk}c_{\bi,\bk}\otimes c_{\bk,\bj};
\end{align}
and $\varepsilon(c_{\bi,\bj})=\delta_{\bi,\bj}$.

Alternatively, the classical Schur algebra $S(m,r)$ can be defined as the dual of the $A(m,r)$.  So $S(m,r)$ has basis $\{\xi_{\bi,\bj}\mid (\bi,\bj)\in \um^r\times\um^r\slash \sim\}$ dual to the basis $\{c_{\bi,\bj}\mid (\bi,\bj)\in \um^r\times\um^r\slash \sim\}$ of $A(m,r)$. This means
\begin{align*}
\xi_{\bi,\bj}(c_{\bk,\bl})=\begin{cases} 1, &\mbox{ if } (\bi,\bj)\sim (\bk,\bl);\cr
0,  &\mbox{ otherwise}.
\end{cases}
\end{align*}
 Furthermore, $S(m,r)$ is an associative algebra of dimension $\sf{d}_{m,r}$  with the following multiplication rule
\begin{itemize}
\item[(S1)] $\xi_{\bi,\bj}\xi_{\bk,\bl}=\sum_{\bp,\bq} a_{\bi,\bj,\bk,\bl,\bp,\bq}\xi_{\bp,\bq}$, where  $a_{\bi,\bj,\bk,\bl,\bp,\bq}$ equals the number of the elements $\bs\in \um^r$ satisfying $(\bi,\bj)\sim (\bp,\bs)$ and $(\bk,\bl)\sim (\bs,\bq)$.
\item[(S2)] $\xi_{\bi,\bj}\xi_{\bk,\bl}=0$ unless $\bj\sim \bk$.
\item[(S3)] $\xi_{\bi,\bi}\xi_{\bi,\bj}=\xi_{\bi,\bj}=\xi_{\bi,\bj}\xi_{\bj,\bj}$.
\end{itemize}

\subsubsection{A key lemma for Schur algebras} Let $\Phi_m:\GL(m,\bbc)\rightarrow \End_\bbc(\vtr)$ for $V=\bbc^m$. By the classical Schur algebra theory, we can identify the image of $\Phi_m$  with $S(m,r)$. So we can write the image precisely.

In general, take $g$ from $\GL(m,\bbc)$ with $g=(g_{pq})_{m\times m}$. Then
\begin{align}\label{apq}
\Phi_m(g)=\sum_{(\bp,\bq)\in \um^r\times\um^r\slash\fsr}a_{\bp\bq}\xi_{\bp,\bq} \mbox{ with } a_{\bp\bq}=\prod_{i=1}^r g_{p_iq_i} \mbox{ for } \bp=(p_1,\ldots,p_r), \bq=(q_1,\ldots,q_r).
\end{align}

Set $I_{g}:=\{(\bp,\bq)\in \um^r\times\um^r\slash\fsr\mid a_{\bp \bq}\ne 0\}$. Then  $\Phi_m(g)$ can be expressed as
\begin{align}
 \Phi_m(g)=\sum_{(\bp,\bq)\in I_{g}} a_{\bp\bq}\xi_{\bp,\bq}.
\end{align}

Set $I^0(g)=\{(p,q)\in \um\times\um\mid g_{pq}\ne0\}$, and set $\bbc^\times=\bbc\backslash\{0\}$.
Denote $B(g):=\{h\in\GL(m,\bbc)\mid I^0(h)=I^0(g)\}$. Then $B(g)$  can be regarded as an intersection of a non-empty open subset in $\bbc^{\#I^0(g)}$ with $(\bbc^\times)^{\#I^0(g)}$. So it is still a nonzero open subset of $\bbc^{\#I^0(g)}$.

Fix an order for all elements of the set $I_g$. Set $\lambda(g)=\#I_g$.
Then we can talk about the matrix for $\Phi_m(g)$ for $g\in \GL(m,\bbc)$.
 The following observation is fundamental,  which will be important to the sequel arguments.

\begin{lemma} \label{general GL lem} Suppose $S(m,r)$ is the classical Schur algebra associated with $\GL(m,\bbc)$ and  degree $r$.  Keep the notations as above. The following statements hold.
 \begin{itemize}
 \item[(1)] For any given $g\in \GL(m,\bbc)$, there exist  $\lambda(g)$ elements $h^{(i)}\in B(g)$, $i=1,\dots,\lambda(g)$ such that the matrix $(a^{(i)}_{\bp\bq})_{\lambda(g)\times \lambda(g)}$
is invertible, where $a^{(i)}_{\bp\bq}$ is defined in the same sense as in
(\ref{apq}) with respect to $h^{(i)}$.

\item[(2)] Consequently, for any $(\bp,\bq)\in I_\ggg$, $\xi_{\bp,\bq}=\sum_{i=1}^{\lambda(g)} c_i\Phi_m(h^{(i)})$ for some $c_i\in \bbc$.

\item[(3)] Furthermore, for any $(\bp,\bq)\in \um^r\times\um^r$, there exists $g\in\GL(m,\bbc)$ such that $(\bp,\bq)\in I_\ggg$. Therefore, (2) is valid for any basis element $\xi_{\bp,\bq}$ of $S(m,r)$.
\end{itemize}
\end{lemma}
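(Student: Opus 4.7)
The plan is to prove the three parts in sequence, with~(1) being the heart of the argument and~(2),~(3) following almost formally. For~(1), I first observe that any $h\in B(g)$ satisfies $I^0(h)=I^0(g)$ and hence $I_h=I_g$, so
\[
\Phi_m(h)=\sum_{(\bp,\bq)\in I_g} a_{\bp\bq}(h)\,\xi_{\bp,\bq}, \qquad a_{\bp\bq}(h)=\prod_{i=1}^r h_{p_iq_i}.
\]
I would regard each $a_{\bp\bq}$ as a polynomial in the variables $\{X_{pq}:(p,q)\in I^0(g)\}$ and check that distinct $\fsr$-orbits in $I_g$ yield distinct monomials: two tuples produce the same monomial $\prod X_{p_iq_i}$ iff they carry the same multiset of pairs $\{(p_i,q_i)\}$ iff they lie in the same $\fsr$-orbit. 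Hence $\{a_{\bp\bq}:(\bp,\bq)\in I_g\}$ is a linearly independent family of polynomial functions on the ambient affine space $\bbc^{\#I^0(g)}$, and since $B(g)$ is a nonempty Zariski-open, hence dense, subset of this irreducible space, linear independence persists on $B(g)$. A standard interpolation step --- pick $h^{(1)}$ at which some $a_{\bp\bq}$ is nonzero, subtract suitable multiples from the other functions to make them vanish at $h^{(1)}$, and iterate --- then produces $h^{(1)},\dots,h^{(\lambda(g))}\in B(g)$ at which the evaluation matrix $(a^{(i)}_{\bp\bq})$ is invertible.

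For~(2), the $\lambda(g)$ identities $\Phi_m(h^{(i)})=\sum_{(\bp,\bq)\in I_g} a^{(i)}_{\bp\bq}\,\xi_{\bp,\bq}$ form a linear system whose coefficient matrix is invertible by~(1); inverting it writes each $\xi_{\bp,\bq}$ with $(\bp,\bq)\in I_g$ as a $\bbc$-linear combination of the $\Phi_m(h^{(i)})$. For~(3), given any $(\bp,\bq)\in\um^r\times\um^r$, I would simply take $g\in\GL(m,\bbc)$ with all entries nonzero; such a $g$ exists because the loci of matrices with no zero entries and of invertible matrices are both nonempty Zariski-open in $\bbc^{m^2}$, hence have nonempty intersection. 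Then $I^0(g)=\um\times\um$ and every $\fsr$-orbit lies in $I_g$, so~(2) applies to each basis element $\xi_{\bp,\bq}$ of $S(m,r)$.

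The only nontrivial ingredient is the linear-independence step in~(1): it rests on the observation that distinct orbits yield distinct monomials, a direct unravelling of the equivalence relation, together with the fact that linear independence of polynomials on an irreducible affine variety is preserved under restriction to any nonempty Zariski-open subset. Once these are in hand, the passage to an invertible evaluation matrix is classical, and parts~(2) and~(3) are immediate.
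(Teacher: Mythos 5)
Your argument is correct, and it reaches the conclusion by a slightly different mechanism than the paper. The shared core is the observation that distinct $\fsr$-orbits in $I_g$ give distinct degree-$r$ monomials in the entries indexed by $I^0(g)$ (in the paper this appears as the condition $f_s\neq f_t$ for $s\neq t$). Where you diverge is in how you convert that into an invertible evaluation matrix: you invoke linear independence of distinct monomials as functions on the irreducible affine space $\bbc^{\#I^0(g)}$, note that this persists on the dense open subset $B(g)$, and then apply the standard fact that $\lambda(g)$ linearly independent functions on a set admit $\lambda(g)$ evaluation points with invertible matrix $(a^{(i)}_{\bp\bq})$. The paper instead argues constructively: it rescales $g$ entrywise by $c_{ij}^s$ for $s=0,1,\dots,\lambda(g)-1$, chooses $\bc$ in an intersection of nonempty open sets so that the rescaled matrices stay in $B(g)$ and the monomial values $f_k(\bc)$ are pairwise distinct, and obtains an explicit Vandermonde matrix (with $h^{(1)}=g$ itself among the chosen elements). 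Similarly in (3) the paper exhibits an explicit all-nonzero invertible matrix $g_{ij}=c_j^{i-1}$ (again Vandermonde), whereas you get existence from the intersection of two nonempty Zariski-open subsets of $\bbc^{m^2}$; both are valid. Your version is shorter and relies only on standard generalities; the paper's buys explicitness of the $h^{(i)}$ and of the witness in (3), at the cost of setting up the auxiliary open sets $\mathfrak{P}_s$ and $\mathfrak{X}_{st}$. Parts (2) in both treatments are identical: invert the linear system furnished by (1).
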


\begin{proof}
(1) For  $g=(g_{ij})_{m\times m}\in \GL(m,\bbc)$, we set $l=\lambda(g)$ and $I_g=\{\tau_1,\cdots, \tau_l\}$. Then $$\Phi_m(g)=\sum_{k=1}^{l}a_{\tau_k}\xi_{\tau_k}\,\text{ with}\, a_{\tau_k}=\prod_{i=1}^r g_{p_iq_i}\,\text{if}\, \tau_k=(\bp,\bq)\in \um^r\times\um^r\slash\fsr.$$ For $s\in\mathbb{N}$, set
$$\mathfrak{P}_s=\{(c_{ij})_{m\times m}\in(\mathbb{C}^\times)^{m^2}\mid (c_{ij}^sg_{ij})_{m\times m}\in\GL(m,\bbc)\}.$$
Each $\mathfrak{P}_s$ is a nonempty open subset of $\mathbb{A}^{m^2}$. In particular, $\cap_{s=1}^{l-1}\mathfrak{P}_s\neq \emptyset$. For $\bc=(c_{ij})_{m\times m}\in\cap_{s=1}^{l-1}\mathfrak{P}_s$, let $g_{\bc,s}=(c_{ij}^sg_{ij})_{m\times m}\in\GL(m,\bbc)$ for $s=1,\cdots, l-1$. Then
$$\Phi_m(g_{\bc,s})=\sum_{k=1}^{l}f_k(c_{ij})^sa_{\tau_k}\xi_{\tau_k}$$ where $f_k(x_{ij})$ is a monomial of degree $r$ over the $n^2$ variables $x_{ij}\,(1\leq i, j\leq n)$ and $f_s(x_{ij})\neq f_t(x_{ij})$ for any $s\neq t$. Therefore,
$$\big(\bigcap\limits_{j=1}^{l-1}\mathfrak{P}_j\big)\cap\big(\bigcap\limits_{1\leq s\neq t<l}\mathfrak{X}_{st}\big)\neq \emptyset, $$
where
$$\mathfrak{X}_{st}=\{(c_{ij})_{m\times m}\in(\mathbb{C}^\times)^{m^2}\mid f_s(c_{ij})\neq f_t(c_{ij})\}\,\,\text{for}\,\,1\leq s\neq t\leq l.$$
Now take
$$\bc=(c_{ij})_{m\times m}\in\big(\bigcap\limits_{j=1}^{l-1}\mathfrak{P}_j\big)\cap\big(\bigcap\limits_{1\leq s\neq t<l}\mathfrak{X}_{st}\big).$$
Set $h^{(1)}=g$ and $h^{(i)}=g_{\bc, i-1}$ ($2\leq i\leq l$). Then the corresponding matrix $(a^{(i)}_{\bp\bq})_{\lambda(g)\times \lambda(g)}$ forms a Vandermonde one. It is desired.

(2) follows directly from (1).

(3) Take $c_1,\cdots, c_m\in\mathbb{C}\backslash\{0\}$ such that $c_i\neq c_j$ for any $1\leq i\neq j\leq m$, and $g=(g_{ij})$ with $g_{ij}=c_j^{i-1}$ for $1\leq i, j\leq m$. Then $g\in\GL(m,\bbc)$ and $(\bp,\bq)\in I_\ggg$ for any $(\bp,\bq)\in \um^r\times\um^r$.

\end{proof}

\subsubsection{}\label{enhanced space}  Now we look for a set of generators of $\cale(n,r)$. Generally, we denote by $\calf(\GL_m)$ the set of functions on $\GL_m:=\GL(m,\bbc)$ for a positive integer $m$. Then
we can naturally regard $\calf(\GL_{n})$ as a subset of $\calf(\GL_{n+1})$,  $A(n,r)$ as a sub-coalgebra of $A(n+1,r)$ and $S(n,r)$ as a subalgebra of $S(n+1,r)$.
Consider a map
$$\theta: \cale(n,r)\rightarrow S(n+1,r)$$
sending $\Phi(\ug)$ to $\theta_{\ug}$ for $\ug=(g,v)$. This is an algebra homomorphism.
Note that $(g,v)=(g,0)(e, g^{-1}v)$. So $\theta_{\ug}=\theta_{(g,0)}\theta_{(e,g^{-1}v)}$.
By the definition of enhanced groups (see (\ref{enhanced product})),  under $\theta$  we can identify  $\Phi(\GL(V))$ with $S(n,r)$. Still set $e^v=(e,v)$ for $v\in V$. Denote $\Omega(n,r)$ the subalgebra of $\cale(n,r)$ generated by $\theta_{e^v}$ with $v$ ranging over $V$.
Then the first question is to understand  $\theta_{e^v}$ and the subalgebra $\Omega(n,r)$ generated by them.

In the standard basis elements $c_{\bi,\bj}$  of $A(m,r)$, the factor $c_{i,j}$ stands for the function which associates to each $g\in \GL_m$ its $(i,j)$-coefficient $g_{ij}$.

Let us turn to $\GL(\uV)$. Firstly, from now on we will always fix  a basis
 \begin{align*}
  \{\eta_i\mid i=1,2,\ldots,n+1\} \mbox{ for }\uV \mbox{ with }\eta_{n+1}=\eta; \eta_1,\ldots,\eta_n\in V,
  \end{align*}
  and then  identify $\GL(\uV)$ with $\GL_{n+1}$. In particular, $e^v$ becomes the following $(n+1)\times (n+1)$ matrix
\begin{equation*}
\left( \begin{array}{ccccccc}
 1 & 0 & 0 &\cdots &0&0& a_1 \cr
0 &1 &0 &\cdots &0&0&a_2\cr
0&0&1&\cdots &0&0&a_3 \cr
\vdots&\vdots&\vdots&\vdots&\vdots&\vdots&\vdots\cr
0&0&0&\cdots &0&1&a_n\cr
0&0&0&\cdots &0&0&1
\end{array}\right).
\end{equation*}
where $v=\sum_{i=1}^na_i\eta_i\in V$.

In the sequel, we always set $\caln:=\{1,2,\ldots, n+1\}$ (note that, in general, we set $\ul=\{1,2,\ldots,l\}$ for the positive integer $l$ throughout the paper, only with an exception for $n+1$ because we intend to stress this special situation in our paper).
For $\bs=(s_1,\ldots,s_n)\in \bbn^n$, $\bt=(t_1,\ldots,t_{n+1})\in \bbn^{n+1}$ we set $|\bs|=\sum s_i, |\bt|=\sum_i t_i$. Furthermore,
we denote by  $(...i^d...)$ an $r$-tuple in $\caln^r$, where $i$ appears $d$ times continuously.   By a straightforward  computation we have
\begin{align}\label{cobasis}
\theta_{e^v}=\sum_{(\bs,\bt)} (\prod_{k=1}^n a_k^{s_k})\xi_
{(1^{s_1}2^{s_2}\ldots n^{s_n}1^{t_1}2^{t_2}\ldots n^{t_n}(n+1)^{t_{n+1}}), ((n+1)^{|\bs|} 1^{t_1} 2^{t_2}\ldots n^{t_n}(n+1)^{t_{n+1}})}.
\end{align}
Here $(\bs,\bt)$ in the sum  runs through the range $\Lambda:=\{(\bs,\bt)\in\bbn^n\times \bbn^{n+1}\mid |\bs|+|\bt|=r\}$, and we appoint $0^0$ to be $1$ if it appears in the coefficient $a_\bs:=\prod_{k=1}^n a_k^{s_k}$.  Denote by  $\theta_{\bs,\bt}$ the summand in the expression in (\ref{cobasis}) corresponding to $(\bs,\bt)$.

Set $\bbn^n_r:=\{\bs \in\bbn^n\mid|\bs|\leq r\}$. For $\bs\in\bbn^n_r$, set
$\Lambda_\bs:=\{\bt\in\bbn^{n+1}\mid(\bs,\bt)\in\Lambda\}$, and set
$$\Theta_\bs:=\sum_{\bt\in \Lambda_\bs}\theta_{\bs,\bt}.$$
 Then
$$\theta_{e^v}=\sum_{{\bs}\in \bbn^n_r}a_\bs \Theta_{\bs}.$$

\begin{lemma}\label{gen V} The subalgebra $\Omega(n,r)$ has a basis $\{\Theta_\bs\mid \bs\in\bbn^n_r\}$. Therefore,  $\dim\Omega(n,r)=\sum_{k=0}^{r}{{n+k-1}\choose k}$.
\end{lemma}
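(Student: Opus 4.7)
The plan is to exploit the group relation $e^v\cdot e^w=e^{v+w}$ first to collapse $\Omega(n,r)$ to a linear span of the generators, then apply polynomial interpolation in the coordinates of $v$ to identify that span with $\mathrm{span}_\bbc\{\Theta_\bs\}$, and finally verify linear independence of the $\Theta_\bs$ by a combinatorial distinctness argument in the standard basis of $S(n+1,r)$. For the first step, since $\theta\colon\cale(n,r)\to S(n+1,r)$ is an algebra homomorphism and $(e,v)(e,w)=(e,v+w)$ holds in $\uG$, we get $\theta_{e^v}\theta_{e^w}=\theta_{e^{v+w}}$, with $\theta_{e^0}$ serving as a unit. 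Hence the set $\{\theta_{e^v}:v\in V\}$ is closed under multiplication and contains an identity, so $\Omega(n,r)=\mathrm{span}_\bbc\{\theta_{e^v}:v\in V\}$; the inclusion $\Omega(n,r)\subseteq\mathrm{span}_\bbc\{\Theta_\bs : \bs\in\bbn^n_r\}$ then follows at once from (\ref{cobasis}).

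For the reverse inclusion, I would read (\ref{cobasis}) as saying that $v=\sum_k a_k\eta_k\mapsto\theta_{e^v}=\sum_\bs a_\bs\Theta_\bs$ is a polynomial map of total degree at most $r$ from $V\cong\bbc^n$ to $S(n+1,r)$ whose coefficient at the monomial $a_\bs$ is exactly $\Theta_\bs$. Since the distinct monomials $\{a_\bs : \bs\in\bbn^n_r\}$ are linearly independent functions on $\bbc^n$, one can choose generic test vectors $v_1,\ldots,v_N\in V$ with $N=|\bbn^n_r|$ so that the matrix $(a_\bs(v_i))_{\bs,i}$ is invertible. This is a standard Vandermonde-style argument, in the same spirit as Lemma \ref{general GL lem}; inverting the matrix exhibits each $\Theta_\bs$ as a $\bbc$-linear combination of the $\theta_{e^{v_i}}$, giving the required reverse inclusion.

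It remains to show that $\{\Theta_\bs : \bs\in\bbn^n_r\}$ is linearly independent. I would verify this by checking that distinct $(\bs,\bt)\in\Lambda$ produce distinct basis elements $\xi_{\bi,\bj}$ of $S(n+1,r)$; equivalently, that the $\fsr$-orbit of the index pair $(\bi,\bj)$ appearing in (\ref{cobasis}) determines $(\bs,\bt)$. Indeed, the multisets of entries of $\bi$ and of $\bj$ are orbit invariants: the content of $\bj$ records $t_1,\ldots,t_n$ together with $|\bs|+t_{n+1}$, while the content of $\bi$ records $s_k+t_k$ for $1\leq k\leq n$ together with $t_{n+1}$, and a trivial subtraction recovers $\bs$ and $\bt$ uniquely. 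Consequently the subsets $\{\theta_{\bs,\bt} : \bt\in\Lambda_\bs\}$ of the standard basis are pairwise disjoint as $\bs$ varies, so the $\Theta_\bs$ are linearly independent, and $|\bbn^n_r|=\sum_{k=0}^r\binom{n+k-1}{k}$ gives the asserted dimension. The only step requiring real care is this orbit-distinctness verification, which is nonetheless purely combinatorial and poses no substantive difficulty.
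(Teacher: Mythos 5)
Your proposal is correct and follows essentially the same route as the paper: the paper also passes from the generators $\theta_{e^v}$ to the $\Theta_\bs$ by a Vandermonde-type interpolation (using explicit vectors $v_a=\sum_i a^{r^{i-1}}\eta_i$ and powers of a non-root of unity, where you invoke generic test vectors), with the multiplicative closure $\theta_{e^v}\theta_{e^w}=\theta_{e^{v+w}}$ left implicit there but made explicit by you. Your linear-independence step via disjointness of the $\fsr$-orbits (contents of $\bi$ and $\bj$ determining $(\bs,\bt)$) is just the combinatorial form of the paper's evaluation argument $\Theta_\bs(c_{\bs,\bt})=1$, $\Theta_\bs(c_{\bs',\bt'})=0$ for $\bs\ne\bs'$, so no gap.
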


\begin{proof}  For $a\in\bbc$, take $v_a=\sum\limits_{i=1}^na^{r^{i-1}}\eta_i\in V$. Then
$$\theta_{e^{v_a}}=\sum\limits_{\bs\in \bbn^n_r}a^{||\bs||}\Theta_{\bs},$$
where $||\bs||=\sum_{i=1}^nr^{i-1}s_i$. Then $||\bs||\neq ||\bs^{\prime}||$ for any distinct $\bs,\bs^{\prime}\in\bbn^n_r$.
Denote $\lambda(n,r)=\#\bbn^n_r$.
Take $c\in \bbc^\times$ and $c$ is not a root of unit, and $a_i=c^i$ for $0\leq i\leq \lambda(n,r)-1$.  This implies that $\Theta_{\bs}$ is in the subspace spanned by $\{\theta_{e^{v_{a_i}}}\mid 0\leq i\leq \lambda(n,r)-1\}$ for any $\bs\in \bbn^n_r$. In particular, $\Theta_{\bs}$ is in the subalgebra generated by $\theta_{e^{v}}$ for $v\in V$. Consequently, $\Omega(n,r)$ coincides with the subalgebra of $\cale(n,r)$ generated by $\theta_{e^{v}}$ for $v\in V$. And  we have $\Omega(n,r)=\sum_{\bs\in \bbn^n_r} \bbc \Theta_{\bs}$.

Next we need to show that all $\Theta_{\bs}$ are linearly independent. For any $\bs\in \bbn^n_r$, take $\bt\in\bbn^{n+1}$ such that $(\bs, \bt)\in\Lambda$. Denote by $c_{\bs,\bt}$ the basis element in $A(n+1,r)$ corresponding to $\xi_{\bs,\bt}$. Then $\Theta_{\bs}(c_{\bs,\bt})=1$, and  $\Theta_{\bs}(c_{\bs',\bt'})=0$ whenever $\bs\ne\bs'$.
For any given combinator equation  $\sum_{\bs}k_{\bs}\Theta_{\bs}=0$, taking the value at $c_{\bs,\bt}$ we have $k_{\bs}=0$. Hence,  those $\Theta_{\bs}$ are proved to be linearly independent, so  that the dimension formula follows.
\end{proof}

\subsubsection{Levi and Parabolic subalgebras of $S(n+1,r)$} \label{para sub} There are canonical imbeddings of algebraic groups  $\GL_n\hookrightarrow\GL_{n+1}$  given by
\begin{equation*}
g\mapsto \diag(g,1):= \left( \begin{array}{cc}
 g & 0\cr
 0&1
\end{array}\right) \;\; \forall g\in\GL(n),
\end{equation*}
 and $\Gm\hookrightarrow\GL_{n+1}$ given by $c\mapsto \diag(1,\ldots,1,c)$ $\forall c\in \Gm=\bbc^\times$, respectively. In this sense, both $\GL_n$ and $\Gm$ can be regarded as closed subgroups of $\GL_{n+1}$.  Clearly, the closed subgroup in $\GL_{n+1}$ generated by $\GL_n$ and $\Gm$  is isomorphic to $\GL_n\times \Gm$. We identify both throughout the paper. In the same sense, we identify the closed subgroup generated by $\uG$ and $\Gm$ with  $\Gm\ltimes \uG$ (note that $\Gm$ normalizes $\uG$).  Denote by $\scrl(n,r)$ the image of $\GL_n\times \Gm$ under $\Phi$ which we call the Levi subalgebra of $S(n+1,r)$. And denote by $\calp(n,r)$ the image of  $\Gm\ltimes \uG$  under $\Phi$, which is called the parabolic subalgebra of $S(n+1,r)$. Clearly, when $r\geq 2$, $\calp(n,r)$ is an extension of $\cale(n,r)$ with the following exact sequence
 \begin{align}\label{ext exact}
   \cale(n,r)\hookrightarrow \calp(n,r) \twoheadrightarrow \mathcal{Z}.
   \end{align}
where $\mathcal{Z}$ denotes $\Phi(\Gm)$. Note that $\uvtr$ can be decomposed into a direct sum of subspaces:  $\uvtr=\bigoplus_{l=0}^r\uvtr_l$ with $\uvtr_l=\sum_{\bi\in \caln^r_l}\bbc\eta_{\bi}$, $l=0,1,\cdots,r$ (see \S\ref{un rank} for $\caln^r_l$).
 So we can list a basis of the $(r+1)$-dimensional subalgebra $\Phi(\Gm)$ of $S(n+1,r)$ which consists of  $\epsilon_l$, $l=0,1,\ldots,r$. Here $\epsilon_l$ is defined via
\begin{align}\label{Z basis}
\epsilon_l|_{\uvtr_q}=\begin{cases} \id_{\uvtr_l}  ;& \text{ if } q=l,\cr
0; & \text{ otherwise. }
\end{cases}
\end{align}

Let us further investigate  the Levi subalgebra $\scrl(n,r)$.
 We consider the set
\begin{align*}
D:=\fsr.\{(\bp_l(n+1)^{r-l}), (\bq_l(n+1)^{r-l})\in \caln^r\times\caln^r
\mid \bp_l, \bq_l\in \un^l, l=0,1,\ldots,r\}.
\end{align*}
 By (S1)-(S3), the span of  $\xi_{\bi,\bj}$ with $(\bi,\bj)$ ranging over $D$ forms a subalgebra of $S(n+1,r)$, denoted by $\scrl(n,r)'$.

For a given $(g,c)\in \GL_n\times \Gm$ with $g=(g_{pq})_{n\times n}$, by the classical Schur algebra theory,
\begin{align}\label{class s g}
\Phi(\diag(g,c))|_{\vtr}=\sum_{(\bp,\bq)\in \un^r\times\un^r\slash\fsr}a_{\bp\bq}\xi_{\bp,\bq} \mbox{ with } a_{\bp\bq}=\prod_{i=1}^r g_{p_iq_i}.
\end{align}
  Recall the notation $I_{g}=\{(\bp,\bq)\in \un^r\times\un^r\slash\fsr\mid a_{\bp \bq}\ne 0\}$. Now let us return to the case of the enhanced tensor space. Then  $\Phi(\Diag(g,c))$ can be expressed as
\begin{align}\label{dot g}
 \Phi(\diag(g,c))=\sum_{(\bp,\bq)\in I_{g}} T^{g,c}_{\bp,\bq}
\end{align}
where $T^{g,c}_{\bp,\bq}=\sum_{{\overset{(\bi,\bj)\in D\slash \fsr}
{(\bi,\bj)\preccurlyeq (\bp,\bq)}}} a_{\bi\bj}\xi_{\bi,\bj}$ with \begin{align}\label{a coeff}
a_{\bi\bj}:=c^{r-t}\prod_{k=1}^t g_{p_kq_k}
\end{align}
 where $(\bi,\bj)\preccurlyeq (\bp,\bq)$ means
  \begin{align}\label{ bi and bj}
(\bi,\bj)=((p_{1}\ldots p_{t}(n+1)^{r-t}),(q_{1}\ldots q_{t}(n+1)^{r-t}))
\end{align}
with $(\bp,\bq)\sim ((p_1\ldots p_r), (q_1\ldots q_r))$ for some integer $t$ between $0$ and $r$. We set $\text{rank}_n(\bi):=t$ in \eqref{ bi and bj}.
By the above arguments, $\scrl(n,r)\subset \scrl(n,r)'$.

\begin{lemma}\label{gen G} The following statements hold.
\begin{itemize}
\item[(1)] The Levi  subalgebra  $\scrl(n,r)$ coincides with $\scrl(n,r)'$, this is to say, it  is spanned by $\xi_{\bi,\bj}$ with $(\bi,\bj)$ ranging over $D\slash \fsr$.

\item[(2)] Let $\scrl(n,r)_t$ be the subspace of $\scrl(n,r)$ spanned by $\xi_{\bi,\bj}$ with $\text{rank}_n(\bi)=t$. Then  $\scrl(n,r)=\sum_{t=0}^r\scrl(n,r)_t$ is a graded algebra.
\end{itemize}
\end{lemma}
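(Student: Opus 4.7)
The plan is to establish (1) by reverse-engineering each basis element $\xi_{\bi,\bj}$ of $\scrl(n,r)'$ from the images $\Phi(\diag(g,c))$, via a two-step Vandermonde separation: first by the rank index $t=\text{rank}_n(\bi)$, then inside each rank level using Lemma \ref{general GL lem}. For (2) I will argue directly from the multiplication rules (S1)--(S3).

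For Step 1 I will fix $g \in \GL_n$ and, using (\ref{dot g}) and (\ref{a coeff}), write
\[
\Phi(\diag(g,c)) \;=\; \sum_{t=0}^{r} c^{r-t} F_t(g),
\]
where $F_t(g)$ denotes the sum of basis elements $\xi_{\bi,\bj}$ of rank $t$ with coefficients $\prod_{k=1}^{t} g_{p_k q_k}$. Evaluating at $r+1$ distinct scalars $c_0,\ldots,c_r \in \bbc^{\times}$ and inverting the resulting Vandermonde system will express each $F_t(g)$ as a $\bbc$-linear combination of the $\Phi(\diag(g,c_i))$, hence place it in $\scrl(n,r)$.

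For Step 2 I will fix $t$ and observe that stripping the tail $(n+1)^{r-t}$ from $(\bi,\bj)$ sets up a bijection between $\{(\bi,\bj) \in D/\fsr \mid \text{rank}_n(\bi)=t\}$ and $\un^t \times \un^t / \mathfrak{S}_t$, under which the coefficient of $\xi_{\bi,\bj}$ in $F_t(g)$ matches the coefficient of the corresponding $\xi_{\bp,\bq}$ in the classical-Schur-algebra image $\Phi_n(g) \in S(n,t)$. Then Lemma \ref{general GL lem}(3) applied to $S(n,t)$ will express each $\xi_{\bp,\bq}$, and therefore each rank-$t$ basis element $\xi_{\bi,\bj}$, as a $\bbc$-linear combination of the $F_t(h^{(i)})$ for suitable $h^{(i)} \in \GL_n$. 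Combined with Step 1 this will yield $\scrl(n,r)' \subset \scrl(n,r)$, completing (1).

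For (2) the vector space decomposition $\scrl(n,r) = \bigoplus_t \scrl(n,r)_t$ will be immediate from (1). Compatibility with the product will follow from (S2): a product $\xi_{\bi,\bj}\xi_{\bk,\bl}$ vanishes unless $\bj \sim \bk$, but every basis element of $\scrl(n,r)_t$ has $\text{rank}_n(\bj)=t$ as well, so the ranks of the two factors must agree for the product to be nonzero; when they do, (S1) will keep the product inside $\scrl(n,r)_t$ since all surviving $\xi_{\bp,\bq}$ satisfy $\bp \sim \bi$. The main obstacle I anticipate is the bookkeeping in Step 2 --- verifying that the orbit/stabilizer data on the two sides of the bijection are consistent, so that Lemma \ref{general GL lem}, stated for the pure $\GL_n$ setting, transfers verbatim to the tail-padded basis elements of $\scrl(n,r)'$.
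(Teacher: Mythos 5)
Your proposal is correct, but it organizes the argument differently from the paper. The paper proves (1) in one shot: given $(\bi,\bj)\in D$, it picks $g\in\GL_n$ with all relevant entries nonzero, views it inside $\GL_{n+1}$, and applies Lemma \ref{general GL lem}(1)--(3) directly to the degree-$r$ Schur algebra $S(n+1,r)$; the whole point of formulating that lemma with the zero-pattern class $B(g)$ is that the auxiliary matrices $h^{(i)}$ it produces have the same support as $\diag(g,c)$, hence stay inside the Levi subgroup $\GL_n\times\Gm$, so $\xi_{\bi,\bj}=\sum_i c_i\Phi(h^{(i)})\in\scrl(n,r)$ immediately. You instead separate first along the torus direction, writing $\Phi(\diag(g,c))=\sum_t c^{r-t}F_t(g)$ and inverting a Vandermonde in $c$ to place each rank component $F_t(g)$ in $\scrl(n,r)$, and only then invoke Lemma \ref{general GL lem} inside the smaller classical Schur algebras $S(n,t)$ via the tail-stripping identification $\xi_{\bp,\bq}\mapsto\xi_{(\bp(n+1)^{r-t}),(\bq(n+1)^{r-t})}$ (your bijection is right: a simultaneous $\fsr$-permutation of a rank-$t$ padded pair must preserve the set of $\un$-positions, so orbits correspond to $\mathfrak{S}_t$-orbits on $\un^t\times\un^t$, and the coefficients match). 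The trade-off: the paper's argument is shorter but quietly relies on the reader checking that the lemma's construction applied in $\GL_{n+1}$ never leaves the Levi, whereas your two-step route makes that automatic (the lemma is only ever applied to genuine $\GL_n$-elements) at the cost of the extra Vandermonde in $c$ and the padding bookkeeping; it also makes the rank grading in (2) transparent, and your verification of (2) via (S1)--(S2) agrees with the paper's one-line reduction of (2) to (1).
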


\begin{proof} The second statement directly follows from the first one and the multiplication formula of $S(n,r)$. As to the first statement, it suffices to prove that for any $(\bi,\bj)\in D$, $\xi_{\bi,\bj}$   lies in $\scrl(n,r)$. 
Indeed, it is obvious that there exists $g\in\GL_n$ such that $(\bi,\bj)\in I_g$.  By applying Lemma \ref{general GL lem}, $\xi_{\bi,\bj}=\sum \bbc \Phi(h^{(i)})$ for some $h^{(i)}\in B(g)\subset \GL_n\hookrightarrow \GL_{n+1}$. This is desired.
\end{proof}

Thanks to Lemma \ref{general GL lem}, we have another result.
\begin{lemma}\label{diag gp} The space $T(n,r)$ 
spanned by  $\theta_{\bs,\bt}$ for all $(\bs,\bt)\in\Lambda$ is just the subalgebra generated by $\Phi(\Gm)$ and all $\theta_{({\sf{t}},v)}$ with $\sf{t}$ running through the subgroup $T(n)\subset \GL(n,\bbc)$ consisting of diagonal matrices, and $v$ ranging over $V$. 
Consequently, all $\theta_{\bs,\bt}$ are contained in $\calp(n,r)$.
\end{lemma}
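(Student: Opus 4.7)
The plan is to identify the subalgebra generated by $\Phi(\Gm)$ and the elements $\theta_{(\tau,v)}$, with $\tau\in T(n)$ and $v\in V$, as the $\bbc$-linear span $\bbc\Phi(P)$, where $P$ is the closed subgroup of $\GL_{n+1}$ consisting of all matrices
\[
M_{c,\tau,v}=\begin{pmatrix} \tau & v \\ 0 & c\end{pmatrix},\qquad c\in \Gm,\ \tau\in T(n),\ v\in V,
\]
with $\tau$ regarded as an $n\times n$ diagonal block and $v$ as a column vector. First I would compute $\Phi(M_{c,\tau,v})$ via (\ref{apq}). The only nonzero entries of $M_{c,\tau,v}$ are $(M_{c,\tau,v})_{ii}=\tau_i$ for $i\leq n$, $(M_{c,\tau,v})_{n+1,n+1}=c$, and $(M_{c,\tau,v})_{i,n+1}=v_i$ for $i\leq n$, so the same combinatorial bookkeeping that produced (\ref{cobasis}) gives
\begin{equation*}
\Phi(M_{c,\tau,v})=\sum_{(\bs,\bt)\in\Lambda} c^{t_{n+1}}\Big(\prod_{i=1}^n v_i^{s_i}\Big)\Big(\prod_{i=1}^n \tau_i^{t_i}\Big)\,\theta_{\bs,\bt}.
\end{equation*}
In particular $\Phi(P)\subset T(n,r)$; taking $\tau=e$, $v=0$ shows $\Phi(\Gm)\subset T(n,r)$, and taking $c=1$ shows $\theta_{(\tau,v)}=\Phi(M_{1,\tau,v})\in T(n,r)$.

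Next, the factorization $M_{c,\tau,v}=M_{c,e,0}\cdot M_{1,\tau,v}$ yields $\Phi(M_{c,\tau,v})=\Phi(c)\cdot\theta_{(\tau,v)}$, so every element of $\Phi(P)$ is a product of the prescribed generators. Since $P$ is an honest subgroup of $\GL_{n+1}$, one has $\Phi(P)\cdot\Phi(P)\subseteq \Phi(P)$, and hence $\bbc\Phi(P)$ is already closed under multiplication. Therefore $\bbc\Phi(P)$ coincides with the subalgebra of $S(n+1,r)$ generated by $\Phi(\Gm)$ together with all $\theta_{(\tau,v)}$, and this subalgebra is contained in $T(n,r)$ by the first paragraph.

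The main step is the reverse inclusion $T(n,r)\subseteq \bbc\Phi(P)$. Pick a generic $M=M_{c_0,\tau^{(0)},v^{(0)}}\in P$ with all coordinates of $\tau^{(0)}$, all coordinates of $v^{(0)}$, and $c_0$ nonzero. Then the support $I^0(M)\subset \caln\times\caln$ introduced before Lemma \ref{general GL lem} equals $\{(i,i):i\leq n+1\}\cup\{(i,n+1):i\leq n\}$, so $B(M)\subseteq P$; moreover the associated orbit set $I_M\subset \caln^r\times\caln^r/\fsr$ is exactly $\{(\bi,\bj):\xi_{\bi,\bj}=\theta_{\bs,\bt}\text{ for some }(\bs,\bt)\in\Lambda\}$. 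Applying Lemma \ref{general GL lem}(1)-(2) with $m=n+1$ to this $M$ writes each $\theta_{\bs,\bt}$ as a $\bbc$-linear combination of $\Phi(h^{(i)})$ with $h^{(i)}\in B(M)\subseteq P$, so $\theta_{\bs,\bt}\in\bbc\Phi(P)$. Combined with the previous paragraph this gives $T(n,r)=\bbc\Phi(P)=$ the subalgebra generated by $\Phi(\Gm)$ and the $\theta_{(\tau,v)}$. Since both generators lie in $\calp(n,r)=\Phi(\Gm\ltimes\uG)$, the whole subalgebra lies in $\calp(n,r)$, establishing that all $\theta_{\bs,\bt}$ belong to $\calp(n,r)$.

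The main obstacle is the Vandermonde-type separation of the individual $\theta_{\bs,\bt}$ from the $(2n{+}1)$-parameter family $\Phi(M_{c,\tau,v})$; the essential observation that makes this routine is that $B(M)\subseteq P$ for generic $M\in P$, after which Lemma \ref{general GL lem} applies verbatim and the pairwise distinctness of the monomials $c^{t_{n+1}}v^\bs \tau^\bt$ indexed by $\Lambda$ handles the remaining linear algebra.
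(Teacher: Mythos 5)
Your proposal is correct and follows essentially the same route the paper intends: the paper gives no written argument beyond "Thanks to Lemma \ref{general GL lem}," and your proof is exactly the elaboration that phrase presupposes — compute $\Phi$ on the group $P$ generated by $\Gm$ and the elements $(\mathsf{t},v)$, observe that for a generic $M\in P$ one has $B(M)\subseteq P$ with $I_M$ equal to the full set of orbits indexing the $\theta_{\bs,\bt}$, and then apply Lemma \ref{general GL lem}(1)--(2) with $m=n+1$ to separate each $\theta_{\bs,\bt}$ inside $\bbc\Phi(P)\subseteq\calp(n,r)$. No gaps.
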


\subsection{The structure of the parabolic Schur algebra $\calp(n,r)$}\label{sec: enh sch} Now it is a position to investigate the structure of $\calp(n,r)$.
For any given element in $\caln^r$, modulo the order we can write it in a standard form:
$$(1^{s_1}2^{s_2}\ldots n^{s_n}(n+1)^{r-|\bs|})$$
 where $\bs:=(s_1,\ldots,s_n)\in \bbn^n_r$. So for any basis element $\xi_{\bi,\bj}\in S(n+1,r)$ we can write
\begin{align}
\xi_{\bi,\bj}=\xi_{(1^{s_1}2^{s_2}\ldots n^{s_n}(n+1)^{r-|\bs|}), \bj}
\end{align}
with $\bj\in\caln^r$.
Set
\begin{align}\label{set E}
E:=\{(\tilde\pi_\bs, \bj): =&((1^{s_1}2^{s_2}\ldots n^{s_n}(n+1)^{r-|\bs|}),(j_1\ldots j_{|\bs|} (n+1)^{r-|\bs|}))\in\caln^r\times\caln^r\cr
&\mid\bs\in \bbn^n_r, (j_1\ldots j_{|\bs|})\in\caln^{|\bs|}\},
\end{align}
and $\widetilde E=\fsr.E$.

\begin{theorem}\label{enh sch stru}
Keep the above notations. The following statements hold.
\begin{itemize}
\item[(1)] The parabolic Schur algebra $\calp(n,r)$ is generated by $\Omega(n,r)$ and $\scrl(n,r)$ with product axioms as (S1)-(S3), this is to say, $\calp(n,r)$ is generated by $\Theta_\bs$ and $\xi_{\bi,\bj}$, with $\bs\in\bbn^n_r$ and $(\bi,\bj)\in D\slash\fsr$.

\item[(2)] The parabolic Schur algebra $\calp(n,r)$ has a basis  $\{\xi_{\tilde\pi_\bs,\bj}\mid (\tilde\pi_\bs,\bj)\in E\}$, and
$$\dim\calp(n,r)=\sum\limits_{k=0}^r{{n+k-1}\choose k}{{n+k}\choose k}.$$

\item[(3)] Set $$\calp(n,r)_i=\bbc\mbox{-span}\{\xi_{\tilde\pi_\bs,\bj}\in E\mid |\bs|\geq i\}\mbox{ for } i=0,1,\ldots,r,$$ and  $$\calp(n,r)_{r+1}=\bbc\mbox{-span}\{\xi_{\tilde\pi_\bs,\bj}\in E\mid |\bs|=r,\bj\in\caln^{r}\setminus \underline{n}^{r}\}. $$ Then  $\calp(n,r)_{i+1}$ is an ideal of $\calp(n,r)_{i}$, $i=0,\ldots,r$. Moreover, $\calp(n,r)_i/\calp(n,r)_{i+1}\cong S(n, i)\ltimes\mathfrak{a}_i$ as algebras, where $\mathfrak{a}_i$ is a nonzero abelian ideal for $i=0,\cdots, r-1$, and $\calp(n,r)_r/\calp(n,r)_{r+1}\cong S(n, r)$. 

\item[(4)] Furthermore,  $\calp(n,r)$ has another set of generators $\{\theta_{\bs,\bt},\xi_{\bi,\bj}\mid (\bs,\bt)\in\Lambda, (\bi,\bj)\in D\slash \fsr\}$.

\end{itemize}
\end{theorem}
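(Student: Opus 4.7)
The proof hinges on two structural facts: the Levi decomposition $\Gm\ltimes\uG=(\GL_n\times\Gm)\ltimes V$ inside the parabolic subgroup of $\GL_{n+1}$, and the $(n+1)$-position bookkeeping of orbit representatives in $\caln^r\times\caln^r$ inherited from the classical Schur algebra $S(n+1,r)$.

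For Parts (1) and (2), every element of $\Gm\ltimes\uG$ factors uniquely as $g\cdot e^v$ with $g\in\GL_n\times\Gm$ and $v\in V$, so $\Phi(\Gm\ltimes\uG)=\Phi(\GL_n\times\Gm)\cdot\Phi(V)$ as a subset of $\End_\bbc(\uvtr)$; taking $\bbc$-linear spans gives $\calp(n,r)=\scrl(n,r)\cdot\Omega(n,r)$, which is in particular the algebra generated by these two subalgebras, and Lemmas \ref{gen V} and \ref{gen G} then supply the generating sets asserted in (1). For the basis in (2), I would expand any product of these generators via (S1)--(S3) and observe from the formulas (\ref{cobasis}) and (\ref{dot g}) that all orbits that can appear admit a representative whose $\bi$-coordinate is the sorted $\tilde\pi_\bs$ and whose $\bj$-coordinate has its last $r-|\bs|$ entries equal to $n+1$; this set is closed under multiplication and is exactly indexed by $E$. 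Linear independence is automatic since these are distinct basis elements of $S(n+1,r)$, and the dimension count amounts to enumerating, for each $k=|\bs|$, the $\binom{n+k-1}{k}$ admissible $\bs$'s together with the $\binom{n+k}{k}$ equivalence classes of $\bj\in\caln^{|\bs|}$ modulo the action of the $\fsr$-stabilizer of $\tilde\pi_\bs$ on the first $|\bs|$ positions.

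For Part (3), the ideal property follows from (S2): the product $\xi_{\tilde\pi_{\bs_1},\bj_1}\cdot\xi_{\tilde\pi_{\bs_2},\bj_2}$ vanishes unless $\bj_1\sim\tilde\pi_{\bs_2}$, and when nonzero the output $\xi_{\bp,\bq}$ has $\bp\sim\tilde\pi_{\bs_1}$, forcing $|\bs_\bp|=|\bs_1|$; the analogous constraint on the right factor yields the two-sided ideal condition. To identify the quotients, I would split $\calp(n,r)_i/\calp(n,r)_{i+1}$ according to whether $\bj\in\un^i$ (this gives a subalgebra isomorphic to the classical Schur algebra $S(n,i)$, obtained by trimming the uniform trailing string $(n+1)^{r-i}$) or $\bj\notin\un^i$ (the complement $\mathfrak{a}_i$). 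A product of two elements of $\mathfrak{a}_i$ carries extra $(n+1)$-entries forced into its output $\bj$-string, so it vanishes modulo $\calp(n,r)_{i+1}$, making $\mathfrak{a}_i$ an abelian ideal; in the top case $i=r$ the complement is precisely $\calp(n,r)_{r+1}$, giving $\calp(n,r)_r/\calp(n,r)_{r+1}\cong S(n,r)$.

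Part (4) is then immediate: Lemma \ref{diag gp} places each $\theta_{\bs,\bt}$ in $\calp(n,r)$, and combining these with the $\xi_{\bi,\bj}$'s for $(\bi,\bj)\in D/\fsr$ recovers the generators used in (1) through the relation $\Theta_\bs=\sum_{\bt\in\Lambda_\bs}\theta_{\bs,\bt}$. The main technical hurdle is Part (3): verifying that $\mathfrak{a}_i$ is an ideal with the advertised abelian semidirect-product structure requires careful tracking of how products in $S(n+1,r)$ shift across the $(n+1)$-filtration stratum by stratum, and this multiplication bookkeeping is the most delicate part of the argument.
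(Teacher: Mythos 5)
Your handling of parts (1), (3) and (4) follows essentially the paper's route, but there is a genuine gap in part (2). Everything you say there only establishes the inclusion $\calp(n,r)\subseteq\bbc\text{-span}\{\xi_{\tilde\pi_\bs,\bj}\mid(\tilde\pi_\bs,\bj)\in E\}$: expanding products of generators via (S1)--(S3) and checking that the resulting orbits are indexed by $E$ bounds the dimension from above, and linear independence inside $S(n+1,r)$ is indeed free. What is missing is the reverse inclusion, namely that each \emph{individual} $\xi_{\tilde\pi_\bs,\bj}$ with $(\tilde\pi_\bs,\bj)\in E$ actually lies in $\calp(n,r)$. This is not automatic, because the generators of $\Omega(n,r)$ you have in hand are the sums $\Theta_\bs=\sum_{\bt\in\Lambda_\bs}\theta_{\bs,\bt}$, not single basis elements, so one cannot simply read off individual $\xi$'s from the generating set. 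This reverse inclusion is where the paper does its real work: for $\mathbf{z}=\xi_{p_1\ldots p_s(n+1)^{r-s},\,q_1\ldots q_t(n+1)^{r-t}}$ it exhibits $\mathbf{x}=\xi_{p_1\ldots p_s(n+1)^{r-s},\,q_1\ldots q_tp_{t+1}\ldots p_s(n+1)^{r-s}}\in\scrl(n,r)$ (available by Lemma \ref{gen G}) and $\mathbf{y}=\xi_{q_1\ldots q_tp_{t+1}\ldots p_s(n+1)^{r-s},\,q_1\ldots q_t(n+1)^{r-t}}$, which is a single $\theta_{\bs,\bt}$ and hence lies in $\calp(n,r)$ by Lemma \ref{diag gp}, and then computes $\mathbf{x}\mathbf{y}=(c+1)\mathbf{z}$ via (S1) for an explicit nonnegative integer $c$. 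Without some version of this factorization your proposal does not establish the basis claim, and consequently the filtration in part (3) is not yet known to live inside $\calp(n,r)$ either.

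Two smaller points. First, your justification of the dimension count is incorrect as stated: the $\fsr$-stabilizer of $\tilde\pi_\bs$ acts on the first $|\bs|$ positions through $\mathfrak{S}_{s_1}\times\cdots\times\mathfrak{S}_{s_n}$, not through the full $\mathfrak{S}_{|\bs|}$, so the number of equivalence classes of $\bj\in\caln^{|\bs|}$ depends on $\bs$ and is in general strictly larger than $\binom{n+|\bs|}{|\bs|}$ (for $n=2$, $r=2$, $\bs=(1,1)$ the stabilizer is trivial and one gets $9$ classes, not $\binom{4}{2}=6$); so this enumeration cannot be the source of the stated formula. Second, in part (3) the product of two elements of $\mathfrak{a}_i$ already vanishes in $\calp(n,r)_i$ by (S2), since the $\un$-rank of the inner indices cannot match, rather than merely vanishing modulo $\calp(n,r)_{i+1}$ because of extra $(n+1)$-entries in the output; this is harmless but the mechanism should be stated correctly.
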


\begin{proof}
The statement (1) follows from Lemmas \ref{gen V} and \ref{gen G}.

(2) Denote by $\calp(n,r)'$ the subspace of $S(n+1,r)$ spanned by    $\{\xi_{\tilde\pi_\bs,\bj}\mid (\tilde\pi_\bs,\bj)\in E\}$. By definition, $\calp(n,r)\subset \calp(n,r)'$. What remains is to show that for any ${\bf{z}}=\xi_{\tilde\pi_\bs,\bj}$ with $(\tilde\pi_\bs,\bj)\in E$, ${\bf{z}}$ must fall in $\calp(n,r)$. Modulo the order, we can write $(\tilde\pi_\bs,\bj)\in E$ as
$((p_1p_2\ldots p_s(n+1)^{r-s}), (q_1\ldots q_t(n+1)^{s-t}(n+1)^{r-s}))$ with $(p_1p_2\ldots p_s, q_1\ldots q_t)\in\un^s\times\un^t$ for $t\leq s$. Then
$${\bf{z}}=\xi_{p_1p_2\ldots p_s(n+1)^{r-s}, q_1\ldots q_t(n+1)^{r-t}}.$$
 Consider $${\bf{x}}:=\xi_{p_1p_2\ldots p_s(n+1)^{r-s}, q_1\ldots q_tp_{t+1}\ldots p_s (n+1)^{r-s}}\in \scrl(n,r)$$
and
 $${\bf{y}}:=\xi_{q_1\ldots q_tp_{t+1}\ldots p_s (n+1)^{r-s},q_1\ldots q_t(n+1)^{r-t}}\in \Omega(n,r).$$
Thanks to (S1), it follows that  ${\bf{xy}}=(c+1){\bf{z}}$ where {\small{
$$c=\#\{\sigma.(q_1,\ldots,q_t,p_{t+1},\cdots p_s)\mid \sigma\in (\frak{S}_s)^\bp_\bq\}$$}} for {\small{
$$(\frak{S}_s)^{\bp}_{\bq}:=\{\sigma\in\mathfrak{S}_s\mid \sigma.(p_1,\ldots,p_s)=(p_1,\ldots,p_s), \mbox{ while } \sigma.(q_1,\ldots,q_t,p_{t+1},\cdots p_s)\ne(q_1,\ldots,q_t,p_{t+1},\cdots p_s)\}.$$ }}
Consequently, by Lemmas \ref{diag gp} and \ref{gen G} we have that ${\bf{z}}$ really lies in $\calp(n,r)$. The dimension formula is obvious.

(3)  By (S1)-(S3), it is readily known that $\calp(n,r)_{i+1}$ is an ideal of $\calp(n,r)_{i}$, $i=0,\ldots,r-1$. And
$$\calp(n,r)_i/\calp(n,r)_{i+1}=\mathfrak{s}_i\oplus\mathfrak{a}_i$$
where
$$\mathfrak{s}_i=
\bbc\mbox{-span}\{\overline{\xi_{\tilde\pi_\bs,\bj}}\mid (\tilde\pi_\bs,\bj)\in E_i^{(1)}\},
\mathfrak{a}_i=\bbc\mbox{-span}\{\overline{\xi_{\tilde\pi_\bs,\bk}}\mid (\tilde\pi_\bs,\bk)\in E_i^{(2)}\}$$ with
{\small{
\begin{align*}
E_i^{(1)}=\{(\tilde\pi_\bs, \bj): =&((1^{s_1}2^{s_2}\ldots n^{s_n}(n+1)^{r-i}),\bj=(j_1\ldots j_{i} (n+1)^{r-i}))
\mid\bs\in \bbn^n, |\bs|=i, (j_1\ldots j_{i})\in\underline{n}^{i}\}.
\end{align*}}}
and {\small{
\begin{align*}
E_i^{(2)}=\{(\tilde\pi_\bs, \bk): =&((1^{s_1}2^{s_2}\ldots n^{s_n}(n+1)^{r-i}),(k_1\ldots k_{i} (n+1)^{r-i}))
\mid\bs\in \bbn^n, |\bs|=i, \bk=(k_1\ldots k_{i})\in\caln^{i}\setminus \underline{n}^{i}\}.
\end{align*}}}
The following mapping
\[
\begin{array}{rcl}
\Xi:\mathfrak{s}_i&\rightarrow&S(n, i)\vspace{3mm}\\
\overline{\xi_{\tilde\pi_\bs,\bj}}&\mapsto&\xi_{((1^{s_1}2^{s_2}\ldots n^{s_n}),(j_1\ldots j_{i}))}
\end{array}
\]
gives an algebra isomorphism between $\mathfrak{s}_i$ and $S(n,i)$. Furthermore, $\mathfrak{a}_i$ is an ideal of $\calp(n,r)_i/\calp(n,r)_{i+1}$.

(4) This statement follows from (1) and Lemma \ref{diag gp}.
\end{proof}

\section{Degenerate double Hecke algebras}
In this section, we introduce degenerate double Hecke algebras which will be important in the sequel arguments. For the symmetric group $\fsr$, we denote by $s_l=(l,l+1)$ for $l=1,\ldots,r-1$, the  transposition just interchanging $l$ and $l+1$, and fixing the others.

\subsection{Degenerate double Hecke algebras}
\subsubsection{}\label{sec: ddha}  For  given positive integers $r$ and $l$ with $r>l$, we consider the following algebra $\calh^l_r$ defined by generators $\{\bx_\sigma\mid \sigma\in\fsl\}\cup\{\bs_i\mid i=1,2,\cdots,r-1\}$ and relations as below.
\begin{align}\label{ddha 1}
\bs_i^2=1,\;\;\bs_i\bs_j=\bs_j\bs_i \mbox{ for } 0< i\ne j\leq r-1,\; |j-i|>1;
\end{align}
\begin{align}\label{ddha 2}
\bs_i\bs_j\bs_i=\bs_j\bs_i\bs_j \mbox{ for } 0< i\ne j\leq r-1,\;  |j-i|=1;
\end{align}
\begin{align}\label{ddha 3}
\bx_{\sigma}\bx_{\mu}= \bx_{\sigma\circ\mu} \mbox{ for } \sigma,\mu\in\fsl;
\end{align}
\begin{align}\label{ddha 4}
\bs_i\bx_{\sigma}=\bx_{s_i\circ\sigma},\;\; \bx_\sigma \bs_i=\bx_{\sigma\circ s_i} \mbox{ for }\sigma\in\fsl, i<l;
\end{align}
\begin{align}\label{ddha 5}
\bs_i\bx_{\sigma}=\bx_\sigma=\bx_\sigma \bs_i \mbox{ for }\sigma\in\fsl, i>l.
\end{align}
This is an infinite-dimensional associative algebra.  We call $\calh^l_r$  the $l$th degenerate double Hecke algebra of $\fsr$. By (\ref{ddha 3}), the subalgebra $X_l$ generated by $\bx_\sigma$ for $\sigma\in\fsl$ is isomorphic to $\bbc\fsl$.
As well as being a subalgebra of $\calh^l_r$, $\bbc\fsr$ is also a quotient, via the homomorphism $\calh^l_r\twoheadrightarrow \bbc\fsr$ mapping $\bs_i\mapsto s_i$ and $\bx_\sigma\mapsto 0$ for each $i=1,\ldots,r-1$ and $\sigma\in\fsl$.

 Additionally, we make an appointment that  $\calh^0_r:=\langle\fsr, X_0\rangle$ with $X_0=\bbc \bx_\emptyset$ satisfying $f=f.\bx_\emptyset=\bx_\emptyset.f$ for $f\in\calh^0_r$; and $\calh^r_r:=\langle\bbc \bs_i, \bx_\sigma\rangle$ with all $\bs_i, \bx_\sigma$, $i=1,\ldots,r-1$ and $\sigma\in\fsr$  satisfying (\ref{ddha 1})-(\ref{ddha 4}). Here and after, $\langle \Box \rangle$ stands for a $\bbc$-algebra generated by $\Box$.
 Then $\calh^0_r\cong \bbc\fsr$, and $\calh^r_r\cong \bbc \fsr^{[2]}$ with  $\fsr^{[2]}$ being a group giving rise to  a non-split extension $\fsr\hookrightarrow \fsr^{[2]}\twoheadrightarrow\fsr$.

\subsubsection{Full degenerate double Hecke algebras} Now we combine all $\calh^l_r$ ($l=0,1,\ldots,r$) into a full degenerate double Hecke algebras.
\begin{definition} The degenerate double Hecke algebra $\calh_r$ of $\fsr$ is an associative algebra  with generators $\bs_i$ ($i=1,\ldots,r-1$), and $\bx^{(l)}_\sigma$ for $\sigma\in\fsl$, $l=0,1,\ldots,r$, and with relations as (\ref{ddha 1})-(\ref{ddha 5}) in which $\bx_{\sigma},\bx_{\mu}$ are replaced by $\bx_{\sigma}^{(l)},\bx_{\mu}^{(l)}$, and additional ones:
\begin{align}\label{ddha 6}
\bx^{(l)}_\delta \bx^{(k)}_\gamma=0 \mbox{ for }\delta\in\fsl, \gamma\in\fsk \mbox{ with }k\ne l.
\end{align}
\end{definition}

 Naturally, as well as being a subalgebra of $\calh_r$, $\bbc\fsr$ is also a quotient of $\calh_r$, via the homomorphism $\calh_r\twoheadrightarrow \bbc\fsr$ mapping $\bs_i\mapsto s_i$ and $\bx^{(l)}_\sigma\mapsto 0$ for each $i=1,\ldots,r-1$ and $\sigma\in\fsl$, and $l=0,1,\ldots,r$.

\subsection{} Degenerate double Hecke algebras arise from the following question: 
\begin{question}
For the tensor representation  $\uvtr$ over the Levi-subgroup  $\GL(V)\times \Gm$, $\End_\bbc(\uvtr)^{\GL(V)\times \Gm}=?$
\end{question}

Let us begin the arguments with  turning  to $\uvtr$ which is regarded as a $\GL(V)$-module by fixing  $\eta$ and a $\Gm$-module by fixing $V$.

Keep the notations as before. In particular, we fix a basis $\{\eta_1,\ldots,\eta_n; \eta_{n+1}:=\eta\}$ for $\uV=V\oplus \bbc\eta$ with $V=\sum_{i=1}^n\bbc\eta_i$. Associated with this basis, $\GL(V)=\GL(n,\bbc)$, and $\GL(\uV)=\GL(n+1,\bbc)$. And $\GL(n,\bbc)$ is canonically regarded as a subgroup of $\GL(n+1,\bbc)$ by the established imbedding $\GL(V)\hookrightarrow \underline{\GL(V)} \hookrightarrow\GL(\uV)$ sending $g\in \GL(V)$ to $(g,0)\in \underline{\GL(V)}$ (see Example \ref{ex3} for the notations).

 In this view, we already know that the image of $\GL(V)\times\Gm$ by $\Phi$  is  $\scrl(n,r)$ (\S\ref{para sub}). In the following we will exactly determine  $\End_{\scrl(n,r)}(\uvtr)$ and then deduce the Levi version of  Schur-Weyl duality for  $\GL(n,\bbc)\times\Gm$.

\subsection{}\label{un rank} Recall that all $\eta_\bi=\eta_{i_1}\otimes \eta_{i_2}\otimes\cdots\otimes\eta_{i_r}$ for $\bi=(i_1,\ldots,i_r)\in\caln^r$ form a basis of $\uvtr$.
For a given $\bj=(j_1,\ldots,j_r)\in\caln^r$, there exists a unique $l\in\{0,1,\ldots,r\}$ and $\bj'_l=(j'_1,\ldots,j'_l)\in\un^l$ such that $\bj\sim (\bj'_l(n+1)^{r-l})$, $l$ is called the $\un$-rank of $\bj$, denoted by $\text{rk}_{\un}(\bj)$.
 All elements with $\un$-rank equal to $l$ constitute a subset of $\caln^r$, denoted by $\caln^r_l$. Clearly, $\caln^r=\bigcup_{l=0}^r \caln^r_l$.
  And $\uvtr$ is decomposed into a direct sum of subspaces:  $\uvtr=\bigoplus_{l=0}^r\uvtr_l$ for $\uvtr_l=\sum_{\bi\in \caln^r_l}\bbc\eta_{\bi}$, $l=0,1,\cdots,r$.

Clearly, each $\uvtr_l$ is stabilized under $\fsr$-action. Hence this action gives rise to a representation of $\fsr$ on $\uvtr_l$, denoted by $\Psi|_l$. For a subset $I=\{i_1,\ldots, i_l\}\subset \ur$ whose elements are assumed  to be ordered increasingly, we can write a subspace $\uvtr_I$ of $\uvtr_l$ as
$$\uvtr_I:=\bbc\text{-span}\{\eta_\bj=\eta_{j_1}\otimes\cdots\otimes\eta_{j_r}
\mid j_{i_k}\in\un, k=1,\ldots,l; j_d=n+1 \mbox{ for }d\ne i_k \}. $$
This means, for any $\eta_\bj\in \uvtr_I$, there exists
\begin{equation*}\tau_I=
\left( \begin{array}{cccc}
 1 & 2 & \cdots &r\cr
i_1 &i_2 &\cdots &i_r
\end{array}\right)\in\fsr
\end{equation*}
such that $\Psi(\tau_I)\eta_\bj=\eta_{\bj_l(n+1)^{r-l}}\in \uvtr_{\ul}$ for some $\bj_l\in\un^l$, where $\{i_{l+1},\cdots, i_n\}=\{j_d\mid d\in\underline{r}\setminus I\}$.  Conversely, for any $\eta_\bj\in \uvtr_l$, there exists $\tau\in\fsr$ and $\eta_{\bj'}\in \uvtr_{\ul}$ such that $\eta_\bj=\Psi(\tau)(\eta_{\bj'})$. What is more, as a vector space
$$\uvtr_l=\bigoplus_{I\subset \ur} \uvtr_I$$
where $I$ in the sum ranges over all subsets of $\ur$ consisting of  $l$ elements. In the above, each $\uvtr_I$ is a $\GL(V)$-module. The corresponding representation is a subrepresentation of $\GL(V)$ on $\uvtr_l$, the latter of which is also denoted by $\Phi$ for brevity.

For $I\subset \ur$ with $\# I=l$, denote by $\mbox{Sym}(I)$ the symmetric group of $I$ consisting all permutations of the $I$, which is isomorphic to $\fsl$. Naturally,  any $\sigma\in \mbox{Sym}(I)$ gives rise to a transformation on $\uvtr_I$ which just permutates the set $\{\eta_{\bj}\in\uvtr_I\}$ via changing the position of  factor $\eta_{j_i}$ into the position of $\eta_{j_{\sigma^{-1}(i)}}$ for all $i\in I$ and fixing the other factors. This gives rise to an representation of $\mbox{Sym}(I)$ on $\uvtr_I$. For $I=\ul$, the corresponding symmetric group is directly denoted by $\fsl$. The corresponding representation is denoted by $\Psi|_{\ul}$.

\subsection{Representations of $\calh_r$ on $\uvtr$}\label{comm ddha-g}
On $V^{\otimes l}$, there is a permutation representation $\Psi_l^V$ of $\fsl$ defined via $\Psi_l^V(\sigma)$ sending $v_1\otimes v_2 \otimes\cdots\otimes v_l$ onto $v_{\sigma^{-1}(1)}\otimes v_{\sigma^{-1}(2)}\otimes\cdots\otimes v_{\sigma^{-1}(l)}$ for $\sigma\in \fsl$. Keep in mind the notations $\ul=\{1,2,\ldots,l\}\subset\un$, and $\uvtr_{\ul}=V^{\otimes l}\otimes \eta^{\otimes r-l}$. Extending $\Psi_l^V$, we define the following linear operator on $\uvtr_{\ul}$
\begin{align}\label{sigma operator}
x_\sigma=\Psi^V_l(\sigma)\otimes \id^{\otimes r-l}\in\End_\bbc(\uvtr_{\ul})
\end{align}
for $\sigma\in\fsl$. Next we  extend $x_\sigma$ to an element $x_\sigma^{\ul}$ of $\End_\bbc(\uvtr_l)$ by annihilating any other summand $\uvtr_I$ with $I\ne \ul$.

Now let us look at the representation meaning of $\bs_i\in\calh_r^l$ in $\End_\bbc(\uvtr_l)$. This one keeps the role of $\Psi(s_i)$  such that the conjugation of $x_\sigma^{\ul}$ by $\Psi(s_i)$ will be an operator translating  $x_\sigma^{\ul}\in \End_\bbc(\uvtr_{\ul})$ to the forthcoming parallel one $x_\sigma^I\in \End_\bbc(\uvtr_I)$ for $I=\{1,2,\ldots,l-1,l+1\}$.

In general, for $\eta_\bj\in\uvtr_I$ with $I\subset\ur$ and $\#I=l$, we can write $\eta_\bj=\Psi(\tau_I)\eta_{\bj_l'(n+1)^{r-l}}$ for some $\bj_l'\in\un^l$.
Then $\Psi(\tau_I)\circ x_\sigma\circ\Psi(\tau_I^{-1})$  lies in $\End_\bbc(\uvtr_I)$ for any $\sigma\in\fsl$, which extends to an element of $\End_\bbc(\uvtr_l)$ by annihilating any other summand $\uvtr_J$ with $J\ne I$. This elements is denoted by $x_\sigma^I$. All $x_\sigma^I$ $(\sigma\in\fsl)$ generate a subalgebra in $\End_\bbc(\uvtr_l)$, denoted by $E_I$ which is isomorphic to $\bbc\Psi^V_l(\fsl)$. Set $x_e^l:=\sum_I x_e^I$, where $I$ in the sum ranges over all subsets of $\ur$ containing $l$ elements, and $e$ represents the identity element in $\fsl$. Then $x_e^l$ is just the identity mapping on $\uvtr_l$. Sometimes, $e$ also indicates  the identity element in $\fsr$ if the context is clear.



Now it is a position to demonstrate  a representation of the  degenerate double Hecke algebra $\calh_r$ on $\uvtr$.

\begin{lemma}\label{l rep of ddha}
The following statements hold.
\begin{itemize}

\item[(1)] For $1\leq l\leq r$, there is an algebra homomorphism $\Xi_l: \calh^l_r\rightarrow \End_\bbc(\uvtr_l)$ defined by sending $\bs_i\mapsto \Psi|_l(s_i)$ and $\bx_\sigma\mapsto x_\sigma^{\ul}$.


\item[(2)] For $l=0$,  there is an algebra homomorphism $\Xi_0: \calh^0_r\rightarrow \End_\bbc(\uvtr_l)$ defined by sending $\bs_i\mapsto \Psi|_l(s_i)=\id$.

\item[(3)] For any $l\in\{0,1,\ldots,r\}$,  $\Phi(\dot g)$ for any $g\in \GL(V)$ commutes with any elements from  $\Xi_l(\calh^l_r)$  in $\End_\bbc(\uvtr_l)$.

\item[(4)] Set  $E_l:=\bigoplus_{I\in\mathscr{S}_l} E_I$  with $\mathscr{S}_l:=\{I\subset \ur\mid \#I=l\}$, correspondingly $E_0:=E_\emptyset$. Then  $E_l$ is a  left $\Psi|_l(\fsr)$-module under the conjugation, i.e.  $\Psi|_l(\tau). x^I_\sigma=\Psi|_l(\tau) x^I_\sigma\Psi|_l(\tau)^{-1}$ for $\tau\in\fsr$. Furthermore,  $\Xi_l(\calh^l_r)=\Psi|_l(\bbc\fsr)E_l$.
\item[(5)] On the enhanced tensor space $\uvtr$, there is a representation $\Xi$ of $\calh_r$ defined via:
\begin{itemize}
\item[(5.1)] $\Xi|_{\bbc\fsr}=\Psi$;

\item[(5.2)] For any $\bx_\sigma\in \calh^l_r$, $l=0,1,\ldots,r$, $\Xi(\bx_\sigma)|_{\uvtr_l}=\Xi_l(\bx_\sigma)$ and $\Xi(\bx_\sigma)|_{\uvtr_k}=0$ for $k\ne l$.
\end{itemize}
\end{itemize}
\end{lemma}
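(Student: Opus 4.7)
The plan is to verify the five claims by direct inspection using the block decomposition $\uvtr_l = \bigoplus_{I \in \mathscr{S}_l} \uvtr_I$ together with the support property that $x^I_\sigma$ vanishes off $\uvtr_I$. For (1), I would check the defining relations (\ref{ddha 1})--(\ref{ddha 5}) one at a time. The braid relations (\ref{ddha 1})--(\ref{ddha 2}) transfer because $\Psi|_l$ is the restriction of the permutation representation $\Psi$ of $\fsr$. Relation (\ref{ddha 3}) becomes, on $\uvtr_{\ul} = V^{\otimes l} \otimes \eta^{\otimes(r-l)}$, the composition law $\Psi^V_l(\sigma)\Psi^V_l(\mu) = \Psi^V_l(\sigma\circ\mu)$ in the first $l$ tensor slots, while both sides vanish on $\uvtr_I$ for $I \ne \ul$. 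For (\ref{ddha 4}) with $i < l$, the transposition $s_i$ permutes two of the first $l$ positions, so $\Psi|_l(s_i)|_{\uvtr_{\ul}}$ agrees with $x^{\ul}_{s_i}$, and the composite is $x^{\ul}_{s_i\circ\sigma}$. For (\ref{ddha 5}) with $i > l$, $s_i$ stabilises $\ul$ as a set and swaps two $\eta$-factors, so $\Psi|_l(s_i)$ is the identity on $\uvtr_{\ul}$; off $\uvtr_{\ul}$ the operator $x^{\ul}_\sigma$ is already zero, making both composites equal to $x^{\ul}_\sigma$. Statement (2) is then trivial, since $\uvtr_0 = \bbc\eta^{\otimes r}$ is one-dimensional and fixed by every $\Psi|_0(s_i)$, so $\Xi_0$ collapses to the trivial homomorphism.

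For (3), I would exploit that $\Phi(\dot g)$ acts on each tensor factor as $\nu(g) \oplus \id_{\bbc\eta}$; this preserves every $\uvtr_I$ (the $\eta$-positions are undisturbed) and restricts on $\uvtr_{\ul}$ to $\nu(g)^{\otimes l} \otimes \id^{\otimes(r-l)}$. Classical Schur--Weyl duality yields $[\nu(g)^{\otimes l}, \Psi^V_l(\sigma)] = 0$ on $V^{\otimes l}$, whence $\Phi(\dot g)$ commutes with $x^{\ul}_\sigma$ on $\uvtr_{\ul}$; on $\uvtr_I$ with $I \ne \ul$ both composites vanish because $x^{\ul}_\sigma$ is zero there and $\Phi(\dot g)$ stabilises the subspace. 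Commutation with each $\Psi|_l(s_i)$ is the classical Schur--Weyl statement applied to the enhanced space $\uvtr$.

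For (4), first note that $E_I E_J = 0$ whenever $I \ne J$ (each element of $E_I$ vanishes off $\uvtr_I$), so $E_l = \bigoplus_I E_I$ is itself a subalgebra of $\End_\bbc(\uvtr_l)$. Since $\Psi|_l(\tau)$ maps $\uvtr_I$ onto $\uvtr_{\tau(I)}$, conjugation by $\Psi|_l(\tau)$ carries any operator supported on $\uvtr_I$ to one supported on $\uvtr_{\tau(I)}$; a short computation identifies the conjugate of $x^I_\sigma$ as an element of $E_{\tau(I)}$, so $E_l$ is closed under the conjugation action of $\Psi|_l(\fsr)$. Since every $I \in \mathscr{S}_l$ has the form $\tau(\ul)$, this places every $x^I_\sigma$ in $\Xi_l(\calh^l_r)$, giving $\Psi|_l(\bbc\fsr)E_l \subset \Xi_l(\calh^l_r)$. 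For the reverse inclusion I would show $\Psi|_l(\bbc\fsr)E_l$ is closed under left multiplication by the generators: $\Psi|_l(s_i) \cdot \Psi|_l(\tau)e = \Psi|_l(s_i\tau)e$ is immediate, while $x^{\ul}_\sigma \cdot \Psi|_l(\tau)e = \Psi|_l(\tau)\bigl(\Psi|_l(\tau)^{-1} x^{\ul}_\sigma \Psi|_l(\tau)\bigr)e$ stays in $\Psi|_l(\bbc\fsr)E_l$ because the inner conjugate lies in $E_l$ and $E_l$ is closed under multiplication. Statement (5) then assembles the pieces block-diagonally along $\uvtr = \bigoplus_{l=0}^r \uvtr_l$: the relations (\ref{ddha 1})--(\ref{ddha 5}) for each fixed $l$ follow from (1)--(2), and (\ref{ddha 6}) is automatic because $\Xi(\bx_\delta^{(l)})$ is supported on $\uvtr_l$ and $\Xi(\bx_\gamma^{(k)})$ on $\uvtr_k$, so their product vanishes whenever $k \ne l$. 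The main obstacle will be the bookkeeping in (4): one must carefully verify that conjugation by $\Psi|_l(\tau)$ genuinely lands in $E_l$ rather than in some larger subalgebra, and keep track of supports when sliding the generators past one another.
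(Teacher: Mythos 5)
Your proposal is correct and follows essentially the same route as the paper: direct verification of the relations (\ref{ddha 1})--(\ref{ddha 5}) using the decomposition $\uvtr_l=\bigoplus_{I\in\mathscr{S}_l}\uvtr_I$ and the support property of $x^{\ul}_\sigma$, factorwise commutation for (3), the spanning set $\Psi|_l(\tau)x^I_\sigma$ for (4), and block-diagonal assembly plus disjoint supports for (5) and (\ref{ddha 6}). Your treatment of (4) is in fact a slightly more careful version of the paper's terse argument, correctly noting that conjugation lands only in $E_{\tau(I)}$ rather than sending $x^I_\sigma$ to $x^{\tau(I)}_\sigma$ on the nose.
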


\begin{proof}
(1) For  $l\leq r$, we need to show that $\Xi_l$ keeps the relations (\ref{ddha 1})-(\ref{ddha 5}).

Recall that for any $1\leq i\leq r-1$, $\bj\in\caln^r$, we have
\begin{align*}
\Xi_l(\bs_i)(\eta_\bj)&=\Psi|_l(s_i)(\eta_{\bj})\cr
&=\eta_{s_i(\bj)}\cr
&=\eta_{j_1}\otimes\cdots\otimes\eta_{j_{i+1}}\otimes\eta_{j_i}
\otimes\cdots\otimes\eta_{j_n}.
\end{align*}
Hence, it is readily known that
\begin{align}\label{ddha check1}
\Xi_l(\bs_i)^2=\id,\;\;\Xi_l(\bs_i)\Xi_l(\bs_j)=\Xi_l(\bs_j)\Xi_l(\bs_i) \mbox{ for } 0\leq i\ne j\leq r-1,\; |j-i|>1;
\end{align}
and
\begin{align}\label{ddha check2}
\Xi_l(\bs_i)\Xi_l(\bs_j)\Xi_l(\bs_i)=\Xi_l(\bs_j)\Xi_l(\bs_i)\Xi_l(\bs_j) \mbox{ for } 0\leq i\ne j\leq r-1,\;  |j-i|=1.
\end{align}

For any $\sigma,\mu\in\fsl$, and $\bj=(\bj_l(n+1)^{r-l})$ with $\bj_l\in\underline{n}^l$, we have
$$\Xi_l(\bx_{\sigma})\circ\Xi_l(\bx_{\mu})(\eta_{\bj})
=\eta_{\sigma\mu(\bj)}=\Xi_l(\bx_{\sigma\mu})(\eta_{\bj}).$$
And  $\Xi_l(\bx_{\sigma})\circ \Xi_l(\bx_{\mu})(\eta_{\bk})=0=\Xi_l(\bx_{\sigma\mu})(\eta_{\bk})$ for any $\bk\in\caln^r$ with $\eta_\bk\notin \uV_{\ul}^r$.
Hence,
\begin{align}\label{ddha check3}
\Xi_l(\bx_{\sigma})\circ\Xi_l(\bx_{\mu})=\Xi_l(\bx_{\sigma\mu})
=\Xi_l(\bx_{\sigma}\bx_{\mu}), \; \forall \sigma, \mu\in\fsl.
\end{align}

For any $\sigma\in\fsl, i<l$, and $\bj=(\bj_l(n+1)^{r-l})$ with $\bj_l=(j_1,\cdots, j_l)\in\underline{n}^l$, we have
\begin{align*}
\Xi_l(\bs_i)\circ \Xi_l(\bx_{\sigma})(\eta_{\bj})
&=\eta_{(j_{\sigma^{-1}(1)}\cdots j_{\sigma^{-1}(i+1)}j_{\sigma^{-1}(i)}\cdots j_{\sigma^{-1}(l)}(n+1)^{r-l})}\cr
&=\Xi_l(\bx_{s_i\circ\sigma})(\eta_{\bj}).
\end{align*}
And $\Xi_l(\bs_i)\circ \Xi_l(\bx_{\sigma})(\eta_{\bk})=0=\Xi_l(\bx_{s_i\circ\sigma})(\eta_{\bk})$ for  any $\bk\in\caln^r$ with $\eta_\bk\notin \uV_{\ul}^r$.
Hence,
\begin{align}\label{ddha check4}
\Xi_l(\bs_i)\circ \Xi_l(\bx_{\sigma})=\Xi_l(\bs_i\bx_{\sigma}), \sigma\in\fsl, i<l.
\end{align}
Moreover, similar arguments yield that
\begin{align}\label{ddha check5}
\Xi_l(\bx_{\sigma})\circ \Xi_l(\bs_i)=\Xi_l(\bx_{\sigma}\bs_i), \sigma\in\fsl, i<l.
\end{align}
and
\begin{align}\label{ddha check6}
\Xi_l(\bs_i)\circ\Xi_l(\bx_{\sigma})=\Xi_l(\bx_\sigma)=
\Xi_l(\bx_\sigma\bs_i) \mbox{ for }\sigma\in\fsl, i>l.
\end{align}
Now it follows from (\ref{ddha check1})-(\ref{ddha check6}) that $\Xi_l$ is an algebra homomorphism from $\calh^l_r$ to $\End_\bbc(\uvtr_l)$.


(2) In this situation, $\Xi_0$ obviously keeps the relations (\ref{ddha 1})-(\ref{ddha 5}). Hence, $\Xi_0$ is an algebra homomorphism from $\calh^0_r$ to $\End_\bbc(\uvtr_l)$.

(3) It suffices to show that for any $g\in \GL(V), 1\leq i\leq r-1, \sigma\in\fsl$,
$$\Phi(\dot g)\Xi_l(\bs_i)=\Xi_l(\bs_i)\Phi(\dot g)\,\,\text{and }\,\,\Phi(\dot g)\Xi_l(\bx_{\sigma})=\Xi_l(\bx_{\sigma})\Phi(\dot g).$$
Indeed,
\begin{align*}
\Phi(\dot g)\Xi_l(\bs_i)(\eta_{\bj})&=\Phi(\dot g)(\eta_{j_1}\otimes\cdots\otimes\eta_{j_{i+1}}\otimes\eta_{j_i}\cdots\otimes\eta_{j_r})\cr
&=\Phi(\dot g)(\eta_{j_1})\otimes\cdots\otimes\Phi(\dot g)(\eta_{j_{i+1}})\otimes\Phi(\dot g)(\eta_{j_{i}})\cdots\otimes\Phi(\dot g)(\eta_{j_{n}})\cr
&=\Xi_l(\bs_i)\Phi(\dot g)(\eta_{\bj}).
\end{align*}
Hence, $\Phi(\dot g)\Xi_l(\bs_i)=\Xi_l(\bs_i)\Phi(\dot g)$.

Furthermore, for any $\bj=(j_1,\cdots, j_n)=(\bj_l(n+1)^{r-l})$ with $\bj_l=(j_1,\cdots, j_l)\in\underline{n}^l$,
\begin{align*}
\Phi(\dot g)\Xi_l(\bx_{\sigma})(\eta_{\bj})&=\Phi(\dot g)(\eta_{j_{\sigma^{-1}(1)}}\otimes\cdots\otimes\eta_{j_{\sigma^{-1}(n)}})\cr
&=(\Phi(\dot g)\eta_{j_{\sigma^{-1}(1)}})\otimes\cdots\otimes(\Phi(\dot g)\eta_{j_{\sigma^{-1}(n)}})\cr
&=\Xi_l(\bx_{\sigma})\Phi(\dot g)(\eta_{\bj}).
\end{align*}
And $\Phi(\dot g)\Xi_l(\bx_{\sigma})(\eta_{\bk})=\Xi_l(\bx_{\sigma})\Phi(\dot g)(\eta_{\bk})=0$ for any $\bk\in\caln^r$ with $k_s=n+1$ for some $s\leq l$.
This implies that $\Phi(\dot g)\Xi_l(\bx_{\sigma})=\Xi_l(\bx_{\sigma})\Phi(\dot g)$, as desired.

(4) The first part follows from the second one.
 We only need to prove the latter by different steps.

 (i) First of all,  by a direct check, $\Psi|_l(\bbc\fsr)x^l_e$ is an associative  algebra
  because $\Psi|_l(\tau).x^l_e=\Psi|_l(\tau)x^l_e\Psi|_l(\tau^{-1})=x^l_e$ for $\tau\in\fsr$. Correspondingly, it is a left module of $\Psi|_l(\bbc\fsr)$.

 (ii) By the above, it is not hard to see that $\Xi_l(\calh_r^l)$ is spanned by $\Psi|_l(\tau)x^I_\sigma$ with  $\tau\in\fsr$, $\sigma\in\fsl$ and $I\in\mathscr{S}_l$. Hence $\Xi_l(\calh_r^l)=\Psi|_l(\bbc\fsr)E_l$.

 (iii) As to the algebra homomorphism, it can be directly verified.

(5) We need to show that $\Xi$ keeps the relations (\ref{ddha 1})-(\ref{ddha 5}). Since $\Psi|_{\fsr}$ is a representation of $\fsr$, $\Xi$ keeps the relations (\ref{ddha 1})-(\ref{ddha 2}). Moreover, note that $\Xi|_{_{\uvtr_l}}=\Xi_l$ for $0\leq l\leq r$, we have
\begin{align}\label{hecke rep  3}
\Xi(\bx_{\sigma}^l)\circ\Xi(\bx_{\mu}^k)= \Xi_l(\bx_{\sigma}^l)\Xi_k(\bx_{\mu}^k)=\delta_{kl}\Xi_l(\bx_{\sigma\circ\mu}^k)
=\delta_{kl}\Xi(\bx_{\sigma}^l\bx_\mu^k) \mbox{ for } \sigma,\mu\in\fsl, k,l\in\ur.
\end{align}
\begin{align}\label{hecke rep  4}
\Xi(\bs_i)\circ\Xi(\bx_{\sigma}^l)=\Psi(s_i)\Xi_l(\bx_{\sigma}^l)
=\Xi_l(\bx_{\bs_i\circ\sigma}^l)=\Xi(\bs_i\bx_{\sigma}^l), \sigma\in\fsl, i<l.
\end{align}

\begin{align}\label{hecke rep  5}
\Xi(\bx_{\sigma}^l)\circ\Xi(\bs_i)=\Xi_l(\bx_{\sigma}^l)\Psi(s_i)=\Xi_l(\bx_{\sigma\circ s_i}^l)=\Xi(\bx_{\sigma}^l\bs_i), \sigma\in\fsl, i<l.
\end{align}

\begin{align}\label{hecke rep  6}
\Xi(\bs_i)\circ\Xi(\bx_{\sigma}^l)=\Psi(s_i)\Xi_l(\bx_{\sigma}^l)
=\Xi_l(\bx_{\sigma}^l)=\Xi(\bs_i\bx_{\sigma}^l) \mbox{ for }\sigma\in\fsl, i>l.
\end{align}

\begin{align}\label{hecke rep  7}
\Xi(\bx_\sigma^l)\circ\Xi(\bs_i)=\Xi_l(\bx_{\sigma}^l)\Psi(s_i)
=\Xi_l(\bx_{\sigma}^l)=\Xi(\bx_{\sigma}^l\bs_i)\mbox{ for }\sigma\in\fsl, i>l.
\end{align}
So $\Xi$ is an algebra homomorphism, thereby  a representation of $\calh_r$.
\end{proof}

We will further investigate  this representation in the next section.

\subsection{Finite dimensional DDHAs} Keep the notations as above.

\begin{definition}
 Set $D(n,r):=\Xi(\calh_r)$, which we call the finite-dimensional degenerate double Hecke algebra of $\fsr$ (f.d. DHHA for short).
 \end{definition}

 Set $D(n,r)_l:=\Xi_l(\calh^l_r)$ for $l=0,1,\ldots, r$. We can extend the action of $D(n,r)_l$ on the whole of $\uvtr$ as follows.  Set $$\Psi_l(\sigma):=\Psi|_{\uvtr_l}(\sigma)\circ x^l_e.$$
  Then $\Psi_l$ defines a representation of $\fsr$ on $\uvtr_l$. Now $\Psi_l$ extends a representation of $\fsr$ on $\uvtr$ by letting $\Psi_l(\fsr)(\eta_{\bj})=0$ for $\eta_{\bj}\in\uvtr_k$ with $k\neq l$. Similarly, each $x_{\sigma}^I$ extends to an element in $\End_\bbc(\uvtr)$ with trivial action on $\uvtr_k$ for $k\ne\# I$.  So $D(n,r)_l$ annihilates $\uvtr_k$ for $k\ne l$.
 Thus, each $D(n,r)_l$ becomes a subalgebra of $\cala$, and  two different  such subalgebras are commutative.

\begin{lemma}\label{sum lem}
Keep the notations as above. Then the following statements hold.
\begin{itemize}
\item[(1)] The f.d. DDHA $D(n,r)$ is a direct sum of all $D(n,r)_l$ \rm($\l=0,\cdots, r$\rm), i.e.,
\begin{equation}\label{sum decom}
 D(n,r)=\bigoplus_{l=0}^r D(n,r)_l.
\end{equation}
\item[(2)] Set $d(n,l):=\sum_{\lambda\in P(l,n)} (\dim S_l^{\lambda})^2$ with  $S_l^{\lambda}$ denoting the irreducible Specht module of $\mathfrak{S}_l$ corresponding to $\lambda\in \text{Par}(l,n)$ (see \S\ref{sec: 4.1} for the notation). Then
    $\dim D(n, r)_l=d(n,l){r\choose l}^2$ and
$$\dim D(n,r)=\sum\limits_{l=0}^rd(n,l){r\choose l}^2.$$
\item[(3)] For $l\in\{0,1,\ldots,r\}$ there is a basis of $E_l$: $\{ x^I_{\sigma_{l,i}}\mid I\in\mathscr{S}_l, \sigma_{l,i}\in\fsl, i=1,\ldots,d(n,l)\}$ with $\sigma_{l,1}=\id$, and ${r\choose l}^2$ elements $c_{J,I}\in \Psi|_l(\fsr)$ such that
 $D(n,r)_l$ has a basis $\{c_{J,I}x_{\sigma_{l,i}}^I\mid (I,J)\in \mathscr{S}^2_l, i=1,\ldots,d(n,l)\}$.  In particular, $x_e=\sum_{l=1}^r\sum_{I\in\mathscr{S}_l}x^I_e + x_\emptyset$ is just the identity of $D(n,r)$.

\item[(4)] The following  decomposition of $D(n,r)$ into a direct sum of subspaces holds:
    $$D(n,r)=\bbc\Psi(\fsr)x_e\oplus \bigoplus_{l=1}^r\bigoplus_{\overset{(J,I)\in\mathscr{S}^2_l}{ i=1,\ldots,d(n,l)}}\bbc c_{J,I} x^I_{\sigma_{l,i}}.$$
\end{itemize}
\end{lemma}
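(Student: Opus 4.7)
I treat the four parts in order, with the dimension count in (2) as the heart of the argument and (1), (3), (4) as structural bookkeeping. For (1), I would exploit block orthogonality: by construction $\Xi_l(\calh_r^l)$ acts on $\uvtr_l$ and annihilates $\uvtr_k$ for every $k\ne l$, so the products $D(n,r)_l\cdot D(n,r)_k$ vanish whenever $k\ne l$. Hence $\sum_{l=0}^r D(n,r)_l$ is automatically an internal direct sum of subspaces and is already closed under multiplication. Lemma \ref{l rep of ddha}(5) then expresses every generator of $\Xi(\calh_r)$, namely $\Xi(\bs_i)=\sum_l\Xi_l(\bs_i)$ and $\Xi(\bx^{(l)}_\sigma)=\Xi_l(\bx_\sigma)$, as an element of this subalgebra, so $D(n,r)=\Xi(\calh_r)=\bigoplus_l D(n,r)_l$.

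For (2), the strategy is to identify $D(n,r)_l$ with the $\GL(V)$-centralizer of $\uvtr_l$. Lemma \ref{l rep of ddha}(3) gives the containment $D(n,r)_l\subseteq\End_{\GL(V)}(\uvtr_l)$. Since $\uvtr_l=\bigoplus_{I\in\mathscr{S}_l}\uvtr_I$ as $\GL(V)$-modules with each $\uvtr_I\cong V^{\otimes l}$, the multiplicity principle together with classical Schur--Weyl gives
\[
\End_{\GL(V)}(\uvtr_l)\;\cong\;M_{{r\choose l}}(\bbc)\otimes\End_{\GL(V)}(V^{\otimes l}),
\]
a space of dimension ${r\choose l}^2 d(n,l)$. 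For the reverse inclusion, Lemma \ref{l rep of ddha}(4) already presents $D(n,r)_l=\Psi|_l(\bbc\fsr)E_l$ with $E_I\cong\Psi_l^V(\bbc\fsl)=\End_{\GL(V)}(V^{\otimes l})$; the transitive $\fsr$-action on $\mathscr{S}_l$ then carries these blockwise endomorphisms into every $\Hom_{\GL(V)}(\uvtr_I,\uvtr_J)$. The main technical obstacle is to check that no cancellations occur among the conjugated pieces, which is handled by producing the explicit basis in (3); summing over $l$ then yields the asserted total dimension.

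For (3), I would make the matrix-unit structure explicit. Fix for each pair $(I,J)\in\mathscr{S}_l^2$ an order-preserving bijection $\tau_{J,I}\in\fsr$ with $\tau_{J,I}(I)=J$ and set $c_{J,I}:=\Psi|_l(\tau_{J,I})$; then choose $\sigma_{l,1}=e,\sigma_{l,2},\ldots,\sigma_{l,d(n,l)}\in\fsl$ so that $\{\Psi_l^V(\sigma_{l,i})\}_i$ is a basis of $\Psi_l^V(\bbc\fsl)$. The operator $c_{J,I}x^I_{\sigma_{l,i}}$ is supported on the single summand $\uvtr_I$ and sends it into $\uvtr_J$ via a prescribed element of $\End_{\GL(V)}(V^{\otimes l})$; this block-diagonal pattern of supports immediately forces linear independence, and since the count ${r\choose l}^2 d(n,l)$ matches the upper bound from (2), these elements form a basis of $D(n,r)_l$. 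The identity claim for $x_e$ is then immediate: $\sum_{l\ge 1}\sum_I x^I_e+x_\emptyset$ restricts to $\id$ on each $\uvtr_l$, hence equals $\id_{\uvtr}$.

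Finally, (4) is obtained by regrouping the basis from (3). Each $\Psi(\tau)x_e=\Psi(\tau)$ decomposes blockwise as $\sum_l\sum_{I\in\mathscr{S}_l}c_{\tau(I),I}x^I_{\rho_{I,\tau}}$, where $\rho_{I,\tau}\in\fsl$ is the permutation induced by $\tau$ on $I\to\tau(I)$; this exhibits $\bbc\Psi(\fsr)x_e$ as a distinguished subspace spanned by specific linear combinations of the basis in (3). I would then verify by dimension comparison using (2) that the remaining basis vectors $c_{J,I}x^I_{\sigma_{l,i}}$ together with $\bbc\Psi(\fsr)x_e$ span all of $D(n,r)$; the finicky step, and the main obstacle, is to check that the chosen coset representatives $\tau_{J,I}$ and the transversal $\sigma_{l,i}$ furnish a transversal complement to $\bbc\Psi(\fsr)x_e$, so that the sum on the right-hand side is indeed direct.
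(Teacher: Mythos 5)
Your parts (1)--(3) are essentially sound, and for (2)--(3) you take a genuinely different and arguably cleaner route than the paper: you pin down $D(n,r)_l$ as the full centralizer $\End_{\GL(V)}(\uvtr_l)\cong M_{{r\choose l}}(\End_{\GL(V)}(V^{\otimes l}))$, using Lemma \ref{l rep of ddha}(3) for one inclusion and the operators $\Psi|_l(\tau_{JI})x^I_\sigma$ for the other, so the dimension count and the matrix-unit basis of (3) fall out simultaneously; the paper instead shows directly that the elements $\Psi_l(\tau_{JI})x^I_\sigma$ span $D(n,r)_l$ and counts them blockwise, never invoking the centralizer identification (which is in effect the $l$-graded piece of Theorem \ref{res swd}, and your derivation of it is not circular). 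One repair is needed in (1): your argument only yields $D(n,r)\subseteq\bigoplus_l D(n,r)_l$, since placing the generators $\Xi(\bs_i)$ and $\Xi(\bx^{(l)}_\sigma)$ inside the (multiplicatively closed) internal direct sum says nothing about the reverse inclusion. You must also check $D(n,r)_l\subseteq D(n,r)$, i.e. that $\Psi|_l(\tau)\in D(n,r)$; the paper does this via $\Psi|_l(\tau)=\Psi(\tau)x^l_e$ together with $x^I_e=\Psi(\tau_I)\,\Xi(\bx^{(l)}_e)\,\Psi(\tau_I)^{-1}\in D(n,r)$. Without this the equality in (1), and hence the total dimension formula in (2), is not yet established.

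The genuine gap is in (4), and the ``finicky step'' you defer cannot be completed as planned. With the index set as displayed (all $(J,I)\in\mathscr{S}^2_l$ and all $i$), the second summand is, by (3), exactly $\bigoplus_{l\geq1}D(n,r)_l$, and its sum with $\bbc\Psi(\fsr)x_e=\bbc\Psi(\fsr)$ is not direct: for $r\geq2$ and any $\tau\neq e$ the nonzero operator $\Psi(\tau)-\Psi(e)$ vanishes on $\uvtr_0$, hence equals $\sum_{l\geq1}\bigl(\Psi|_l(\tau)-\Psi|_l(e)\bigr)\in\bigoplus_{l\geq1}D(n,r)_l$, while it obviously lies in $\bbc\Psi(\fsr)$. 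Your proposed dimension comparison would expose the same obstruction rather than confirm directness, since $\dim\bbc\Psi(\fsr)+\sum_{l\geq1}d(n,l){r\choose l}^2$ exceeds $\dim D(n,r)$ as soon as $\dim\bbc\Psi(\fsr)>1=\dim D(n,r)_0$, which holds whenever $r\geq2$. So no transversal complement of $\bbc\Psi(\fsr)x_e$ is spanned by the \emph{full} family $\{c_{J,I}x^I_{\sigma_{l,i}}\}$. What can be proved, and what the later application in Lemma \ref{lemma enhanced} actually uses, is either the non-direct decomposition $D(n,r)=\bbc\Psi(\fsr)+\bigoplus_{l\geq1}D(n,r)_l$, or a direct sum obtained after discarding suitably many of the $c_{J,I}x^I_{\sigma_{l,i}}$ (extend a basis of $\bbc\Psi(\fsr)$ to a basis of $D(n,r)$ inside the spanning set from (3)); this is what the paper's terse ``proper adjustment of the basis'' amounts to. You should either carry out such an adjustment explicitly or record the counterexample and prove the weaker sum statement.
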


\begin{proof}
(1) By definition, $\bigoplus_{l=0}^r D(n,r)_l$ is really a direct sum in $\End_\bbc(\uvtr)$. In the following, we prove (\ref{sum decom}).
 Let $\tau\in\fsr$. Then  for any $\bi\in\caln^r$, we have
$$\Psi(\tau)(\eta_{\bi})=\Psi|_{s}(\tau)(\eta_{\bi})=
\sum\limits_{l=0}^r\sum\limits_{\stackrel{I\subset\un}{\# I=l}}\Psi|_l(\tau)x_e^I(\eta_{\bi}).$$
where $s=\text{rk}_{\un}(\bi)$.
This implies that
$$\Psi(\tau)=\sum\limits_{l=0}^r\sum\limits_{\stackrel{I\subset\ur}{\# I=l}}\Psi|_l(\tau)x_e^I.$$
Hence,
$$ D(n,r)\subset\bigoplus_{l=0}^r D(n,r)_l.$$
On the other hand, since
$$\Psi|_l(\sigma)(\eta_{\bi})=\delta_{l,\text{rk}_{\un}(\bi)}\Psi(\sigma)(\eta_{\bi})=\Psi(\sigma)x_e^l(\eta_{\bi}),\forall\,\bi\in\caln^r,$$
it follows that
$$ \Psi|_l(\sigma)=\Psi(\sigma)x_e^l.$$
This implies that
$$ \bigoplus_{l=0}^r D(n,r)_l\subset D(n,r).$$
We complete the proof for (\ref{sum decom}).

(2) Due to (1), it suffices to prove the formula $\dim D(n, r)_l=d(n,l){r\choose l}^2$.   Firstly, for any given $I\in\mathscr{L}_l$,  the subalgebra $D_I$ generated by all $x^I_\sigma$ ($\sigma\in\fsl$) is isomorphic to $\bbc\Psi_l^V(\fsl)$ which is of dimension equal to $d(n,l)$ by the  representation theory of symmetric groups. So the dimensions of all those $D_I$ are the same. Secondly,  by Lemma \ref{l rep of ddha}(5) $D(n,r)_l$ is spanned by $\Psi(\tau)x^I_\sigma$ for all  $\tau\in\fsr, \sigma\in\fsl$ and $I\subset\ur$ with $\#I=l$. On the other hand, for  any subsets $I=\{i_1<i_2<\cdots<i_l\}, J=\{j_1<j_2<\cdots<j_l\}$ of $\ur$ with $\# I=\# J=l$, take $\tau_{IJ}\in\fsr$ such that $\tau_{IJ}(j_k)=i_k$ for $k=1,\cdots, l$.
 For any two $\tau_1$ and $\tau_2\in\fsr$, if $\tau_1^{-1}\circ\tau_2$ stabilizes $I$, then $\Psi_l(\tau_1)x_\sigma^I=\Psi_l(\tau_2)x_{\sigma'}^I$ for some $\sigma'\in\fsl$. Actually,  for $\tau\in \fsr$ satisfying $\tau|_I\in\text{Sym}(I)$ which is isomorphic to $\fsl$, say by $\theta$,  we have $\Psi_l(\tau)x_{\sigma}^I=x_{\theta(\tau|_I)\circ\sigma}^I$ for any $\sigma\in\fsl, I\subset\ur$.

By the above arguments, we have that for any ordered pair $(I,J)$, the above $\tau_{IJ}$ changes a set of basis of $D_J$ into the ones of $D_I$. There are ${r\choose l}^2$  such ordered pairs $(I,J)$. Summing up,  along with  the definition of $D_I$ we have $\dim D(n,r)_l=d(n,l){r\choose l}^2$, as desired.

(3) This statement follows from the arguments in (2).

(4) Note that the chosen basis elements for every $E_I$ in the statement (3) include $x^I_e$. We make a proper adjustment of the basis such that the new basis contains $x_e$. Then the corresponding decomposition follows.
\end{proof}

\begin{remark} \label{remark: 3.6} If $n>l$,
then  the following set
\begin{equation*}\label{basis}
 \{\Psi_l(\tau_{JI})x_{\sigma}^I\mid I\subset\ur, J\subset\ur, \# I=\# J=l,\sigma\in\fsl\}
\end{equation*}
is a basis of $D(n, r)_l$. Moreover, the algebra structure of $D(n, r)_l$ is given by
\begin{align}\label{algebra structure}
(\Psi_l(\tau_{LK})x_{\mu}^K)(\Psi_l(\tau_{JI})x_{\sigma}^I)=\begin{cases} \Psi_l(\tau_{LI})x_{\mu\circ\sigma}^I, &\mbox{ if }J=K;\cr
0, &\mbox{ otherwise.}
\end{cases}
\end{align}
\end{remark}
 {\sl{Proof of Remark \ref{remark: 3.6}:}}
  Note that in this case, the elements $x_\sigma^I$ for all $\sigma\in\fsl$ and for any given $I$ in the remark are linearly independent in $D_I$.
  The first statement  follows from the arguments in the proof of Lemma \ref{sum lem}(3).
  To show (\ref{algebra structure}), we can assume that $I=\ul$ without loss of generality. We note that the non-vanishing range of the operator $\Psi_l(\tau_{JI})x_{\sigma}^I$ is contained in $\uvtr_J$ for $J\neq K$. Hence, $(\Psi_l(\tau_{LK})x_{\mu}^K)(\Psi_l(\tau_{JI})x_{\sigma}^I)=0$. For the case $J=K$, we have
\begin{align*}
(\Psi_l(\tau_{LJ})x_{\mu}^J)(\Psi_l(\tau_{JI})x_{\sigma}^I)(\eta_{\bi})=\begin{cases} \Psi_l(\tau_{LI})\eta_{(\mu\circ\sigma)\bi}, &\mbox{ if }\bi\in\caln^r_l, i_k\in\un,\,\forall\,k\in I;\cr
0, &\mbox{ otherwise.}
\end{cases}
\end{align*}
Hence, (\ref{algebra structure}) follows.





\section{Duality related to the degenerate double Hecke algebra and the branching duality formula}

In this section, with aid of some structural property of Levi and parabolic Schur algebras, we establish a duality between the degenerate double Hecke algebra $\calh_r$ and $\GL(n,\bbc)\times\Gm$ on the tensor space $(\bbc^{n+1})^{\otimes r}$. We will keep the nations as before.

\subsection{} \label{sec: 4.1} Let us first recall the classical branching law for general linear groups. For positive integers $m,r$, set $$\text{Par}(r,m)=\{\mu=(\mu_1,\cdots,\mu_m)\in\mathbb{N}^n\mid |\mu|=r, \mu_1\geq\mu_2\geq\cdots\geq\mu_m\}. $$
 An element  from $\text{Par}(r,m)$ is usually  called a dominant weight, which is actually a partition of $r$ into $m$ parts (zero parts are allowed). For another weight $\lambda \in\text{Par}(l,n)$ with $l\leq r$ and $n\leq m$, we call $\lambda$ interlaces $\mu$ if
 $\mu_1\geq \lambda_1\geq \mu_2\geq \lambda_2\geq\cdots\geq\mu_{m-1}\geq\lambda_{m-1}\geq \mu_m$
 where $\lambda_{n+1}=\cdots=\lambda_{m-1}=0$. We denote $\lambda\lesssim\mu$ if $\lambda$ interlaces $\mu$.

 For a given $\mu\in\text{Par}(r,n+1)$, let  $L^\mu_{n+1}$ be an irreducible $\GL(n+1,\bbc)$-module with highest weight $\mu$. Then as a module over $\GL(n,\bbc)$, there is a unique decomposition
 $$ L^\mu_{n+1}=\bigoplus_{l=0}^{r}\bigoplus_{\lambda\in \text{Par}(l,n)} \delta_{\lambda\lesssim\mu}L^\lambda_n$$
with
\begin{align*}
\delta_{\lambda\lesssim\mu}=\begin{cases} 1, &\mbox{ if }\lambda\lesssim\mu;\cr
0, &\mbox{ otherwise.}
\end{cases}
\end{align*}
There is a similar branching law for symmetric groups (see (\ref{sym gp branch rule})).

\subsection{}
In the following, we  give the decomposition of $\uvtr_l$ as a $\calh^l_r$-module into a direct sum of irreducible modules. For this, denote by $L^{\lambda}_n$ the irreducible $\GL(n,\bbc)$-module with highest weight $\lambda$  for $\lambda\in \text{Par}(l,n)$, and denote by $S^{\lambda}_l$ the irreducible Specht module over $\mathfrak{S}_l$ corresponding to $\lambda$.

Recall that $\GL(n,\bbc)$ has a standard Borel subgroup consisting of all upper triangular invertible matrices, denoted by $\frak{B}$. Denote by $\frak{N}$ the unipotent radical of $\frak{B}$ which consists of all unipotent upper triangular matrices.

\begin{lemma}\label{ddha decomp l} Keep the notations as above. As a $\calh^l_r$-module, $\uvtr_l$ decomposes into a direct sum of irreducible modules  as follows
\begin{align}\label{decomp l level}
\bigoplus_{\lambda\in \text{Par}(l,n)}(D^\lambda_l)^{\oplus \dim L^\lambda_n}
\end{align}
where as a $\bbc\fsl$-module,
$$D^\lambda_l\cong\underbrace{S^{\lambda}_l\oplus S^{\lambda}_l\oplus\cdots\oplus S^{\lambda}_l.}_{{r\choose l}\,\,\text{\rm times}}$$
\end{lemma}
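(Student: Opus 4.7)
The plan is to combine the classical Schur--Weyl decomposition of $V^{\otimes l}$ with the $\Psi$-transports between the summands $\uvtr_I$ of $\uvtr_l = \bigoplus_{I \in \mathscr{S}_l} \uvtr_I$.

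First, under the identification $\uvtr_{\ul} = V^{\otimes l}\otimes\eta^{\otimes r-l}$, the operators $\Xi_l(\bx_\sigma) = x^{\ul}_\sigma$ act as the permutation representation of $\fsl$ on the first $l$ tensor factors and annihilate every $\uvtr_I$ with $I\ne\ul$. Classical Schur--Weyl duality therefore gives
$$\uvtr_{\ul} = \bigoplus_{\lambda\in\text{Par}(l,n)}L^\lambda_n\otimes S^\lambda_l\otimes\eta^{\otimes r-l}$$
as a $\GL(V)\times X_l$-module. For each $I\in\mathscr{S}_l$ I fix $\tau_I\in\fsr$ for which $\Psi(\tau_I)$ carries $\uvtr_{\ul}$ onto $\uvtr_I$, and define
$$D^\lambda_l := \bigoplus_{I\in\mathscr{S}_l}\Psi(\tau_I)\bigl(S^\lambda_l\otimes\eta^{\otimes r-l}\bigr),$$
so that $L^\lambda_n\otimes D^\lambda_l = \bigoplus_I \Psi(\tau_I)\bigl(L^\lambda_n\otimes S^\lambda_l\otimes\eta^{\otimes r-l}\bigr)$. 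Letting the generating $\fsl$-isotype range over a basis of the multiplicity space $L^\lambda_n$ and then over $\lambda$ produces the decomposition $\uvtr_l = \bigoplus_\lambda(D^\lambda_l)^{\oplus\dim L^\lambda_n}$, with total dimension $\binom{r}{l}n^l$ matching via $\sum_\lambda\dim L^\lambda_n\dim S^\lambda_l = n^l$.

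Next I verify that $D^\lambda_l$ is $\calh^l_r$-irreducible. Let $0\ne N\subseteq D^\lambda_l$ be a submodule and pick $0\ne v\in N$. Since $\Xi_l(\bx_e) = x^{\ul}_e$ is the projection onto $\uvtr_{\ul}$, a suitable $\Psi(\tau)$-translate of $v$ followed by $\Xi_l(\bx_e)$ produces a nonzero element of $S^\lambda_l\otimes\eta^{\otimes r-l}$ inside $N$. This subspace is $X_l$-irreducible, so the full $S^\lambda_l\otimes\eta^{\otimes r-l}$ lies in $N$; applying $\Psi(\tau_I)$ for every $I$ then forces $N = D^\lambda_l$.

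Finally, for the $\bbc\fsl$-module structure, I read $\bbc\fsl$ as the diagonal copy inside $\Xi_l(\calh^l_r)$ generated by $y_\sigma := \sum_{I\in\mathscr{S}_l} x^I_\sigma$; these $y_\sigma$ lie in $\Xi_l(\calh^l_r)$ by Lemma \ref{l rep of ddha}(4), and the vanishing of cross-products $x^I_\sigma x^J_\mu$ for $I\ne J$ yields $y_\sigma y_\mu = y_{\sigma\mu}$ with $y_e = \id_{\uvtr_l}$. On each summand $\Psi(\tau_I)(S^\lambda_l\otimes\eta^{\otimes r-l})$ the conjugation relation $x^I_\sigma = \Psi(\tau_I)\,x^{\ul}_\sigma\,\Psi(\tau_I)^{-1}$ identifies the $y_\sigma$-action with the $\sigma$-permutation on the copy of $S^\lambda_l$, giving $D^\lambda_l\cong\bigoplus_{I\in\mathscr{S}_l}S^\lambda_l$ as $\bbc\fsl$-modules. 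The main obstacle I expect is pinning down the intended realisation of $\bbc\fsl$, since $\calh^l_r$ carries several candidate $\fsl$'s (the subalgebra $X_l$, the parabolic $\fsl\subset\fsr$ via $\bs_1,\ldots,\bs_{l-1}$, and the diagonal copy above); only the diagonal realisation produces $\binom{r}{l}$ copies of $S^\lambda_l$, after which the rest is bookkeeping between Schur--Weyl on $\uvtr_{\ul}$ and its $\Psi$-translates.
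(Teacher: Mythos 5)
Your proposal is correct and follows essentially the same route as the paper: decompose $\uvtr_l=\bigoplus_{I}\uvtr_I$, apply classical Schur--Weyl on $\uvtr_{\ul}$ and transport by $\Psi(\tau_I)$, define $D^\lambda_l$ as the sum of the transported Specht copies, and prove irreducibility by using the $x$-operators to isolate a nonzero component of a submodule element in a single $\uvtr_I$ and then moving it around with $\fsr$. The only cosmetic differences are that the paper pins down the embedded copy of $S^\lambda_l$ concretely as the $\frakN$-invariants of the $\lambda$-weight space $I^\lambda_{\ul}$ (where you instead choose a basis of the multiplicity space $L^\lambda_n$), and it hits the chosen component with a suitable $x^I_\sigma$ rather than translating by $\Psi(\tau)$ and projecting with $x^{\ul}_e$; your diagonal reading of the $\bbc\fsl$-structure via $\sum_I x^I_\sigma$ matches the intended meaning.
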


\begin{proof} For any given $J=\{j_1,j_2,\ldots,j_l\}\subset \underline{r}$, we have a subspace $\uvtr_J$ in $\uvtr_l$, which is spanned by $\eta_{\bi}=\eta_{i_1}\otimes\cdots\otimes\eta_{i_r}$ with $i_{j_1},\ldots,i_{j_l}\in\un$. Then
\begin{align}\label{Level l decomp}
\uvtr_l=\bigoplus_{\overset{J\subset\underline{r}}{\#J=l}}\uvtr_J
\end{align}
where in the sum, $J$ ranges over all subsets of $\underline{r}$ consisting of $l$-elements.
The number of such $J$ is exactly $r\choose l$. For a fixed $J$, $\uvtr_J$ admits an $\fsl$-action, which just permutates the position indicated by $J$, this is to say, for any $\sigma_l\in\fsl$,
$\sigma_l.\eta_\bi= \eta_{\sigma_l.\bi}$ if defining $\sigma_l.\bi=\delta.((\sigma_l.(\bi'_l))(n+1)^{r-l})$ for $\bi=\delta.(\bi'_l(n+1)^{r-l})$.
On the other hand, $\uvtr_J$ becomes a $\GL(V)$-module with every factor $\eta$ fixed.

Thus $\uvtr_J$ can be regarded as an $r$-tensor space of $V$ with $(\GL_n\times \fsl)$-action. Thanks to the classical Schur-Weyl duality, as a $(\GL_n\times \fsl)$-module we have the following decomposition
\begin{align}\label{J part SW}
\uvtr_J\cong \bigoplus_{\lambda\in \text{Par}(l,n)}L^{\lambda}_n\otimes S^{\lambda}_l.
\end{align}
Let us show the meaning of  $S^{\lambda}_l$ in the above decomposition by demonstrating the standard representatives in the isomorphism class of $S^\lambda_l$. For  the simplicity of arguments, we might as well suppose $J=\ul$ which is equal to $\{1,2,\ldots,l\}$ without any loss of generality. Take $\lambda=(\lambda_1,\lambda_2,\ldots,\lambda_n)\in \text{Par}(l,n)$.
Set
\begin{align*}
\eta(\lambda)=&\underbrace{\eta_1\otimes\cdots\otimes\eta_1}_{\lambda_1 \text{ factors}}\otimes \underbrace{\eta_2\otimes\cdots\otimes\eta_2}_{\lambda_2 \text{ factors}}\otimes\cdots\otimes\underbrace{\eta_n\otimes\cdots\otimes\eta_n}_{\lambda_n \text{ factors}}
\otimes\eta\otimes\cdots\otimes\eta.
\end{align*}
Consider the orbit $\fsl.\eta(\lambda)$. The space spanned by this  orbit forms an $\fsl$-module, which is denoted by $I^\lambda_{\ul}$. Then $I^\lambda_{\ul}$ is actually the $\lambda$-weighted space of $\GL(n,\bbc)$-module $\uvtr_{\ul} $. In particular, $I^\lambda_{\ul}$ contains all maximal $\lambda$-weighted vectors in $\GL(n,\bbc)$-module $\uvtr_{\ul}$, all of which by definition, are $\frakN$-invariants in $I^\lambda_{\ul}$. Furthermore,  the invariant space in $I^\lambda_{\ul}$ under the action of $\frakN$ is exactly the unique direct summand isomorphic to $S^\lambda_l$, in the complete reducible decomposition of $I^\lambda_{\ul}$ as $\fsl$-module (see \cite[\S9.1.1]{GW}). This summand is denoted by $S^\lambda_{\ul}$. More generally, each $\uvtr_I$ for $I\subset \underline{r}$ (with $\#I=l$) admits the corresponding $I^\lambda_I$ which contains  a unique direct summand $S^\lambda_I$ paralleling to $S^\lambda_{\ul}$. This means, there exists $\sigma\in \fsr$ such that the conjugation by $\Psi|_l(\sigma)$ maps $S^\lambda_{\ul}$ onto $S^\lambda_I$.
Obviously,
$$D^\lambda_l:=\bigoplus_{\overset{I\subset \underline{r}}{\#I=l}}S^\lambda_I$$
 is a $\Psi|_l(\fsr)$-module, which is as an $\fsl$-module, isomorphic to the direct sum of $r\choose l$ copies of $S^\lambda_l$.

  Next we prove that $D^\lambda_l$ is an irreducible module over $\calh^l_r$. For this, we only need to show that for any given nonzero vector $w\in D^\lambda_l$, the cyclic $\calh^l_r$-submodule $W$ generated by $w$ must coincide with $D^\lambda_l$ itself. Now, we  write $w=\sum_{\text{some }I} w_I$ with some nonzero $w_I\in\uvtr_I$.
 Fix one $w_I$. Recall  $\calh^l_r$ has a subalgebra $E_I$ by abuse of the notation (see \S\ref{comm ddha-g}), which is isomorphic to $\bbc\Psi_l^V(\fsl)$. So the cyclic module $E_Iw_I$ of $E_I$ coincides with the irreducible $\bbc\fsl$-module $S^\lambda_I$. This implies that there is some $x^I_\sigma$ for $\sigma\in\fsl$ such that $x^I_\sigma w_I\ne 0$. On the other hand, by definition (see \S\ref{comm ddha-g}) $x^I_\sigma\sum_{J\ne I} w_J=0$. Therefore, $x^I_\sigma w=x^I_\sigma w_I$ is a nonzero vector in $S^\lambda_I$. Hence the submodule $W$ contains an irreducible $E_I$-module $S^\lambda_I$. By $\bbc\fsr$-action, we finally have that the $\calh^l_r$-submodule $W$ coincides with $\bigoplus_{\overset{J\subset \underline{r}}{\#J=l}}S^\lambda_J$, which is equal to $D^\lambda_l$.

 According to the previous analysis,  $D^\lambda_l$ is really an irreducible $\calh^l_r$-module. From the above arguments, along with (\ref{Level l decomp}) and (\ref{J part SW}), the lemma follows.
\end{proof}

\subsection{}\label{Phi Upsilon}
Let us first notice that $\uvtr$ naturally becomes a representation space of $\cale(n,r)$ (and of $S(n+1,r)$ more generally) by defining
\begin{align}\label{Ups rep}
\xi(\eta_\bi)=\sum_{\bk\in\caln^r}\xi(c_{\bk,\bi})\eta_\bk
\end{align}
 for any $\xi\in \cale(n,r)$, and any basis elements $\eta_\bi=\eta_{i_1}\otimes\cdots\otimes \eta_{i_r}\in $ of $\uvtr$ ($\bi\in \caln^r$).  This representation is denoted by $\Upsilon$. We are actually considering $\Upsilon:\cale(n,r)\rightarrow \End_\bbc(\uvtr)$.

As before, let $\Phi:\GL(n+1)\rightarrow \cala:=\End_\bbc(\uvtr)$ be the natural representation. By the classical Schur algebra theory, we can identify the image of $\Phi$  with $S(n+1,r)$, in aid of $\Upsilon$.

Consequently, we have the following corollary to Lemma \ref{ddha decomp l}.
\begin{corollary}\label{ddha decomp full}  Keep the notations as above. As a $\calh_r$-module, $\uvtr$ decomposes into a direct sum of irreducible modules  as follows
\begin{align}\label{decomp l level}
\bigoplus_{l=0}^r\bigoplus_{\lambda\in \text{Par}(l,n)}(D^\lambda_l)^{\oplus \dim L^\lambda_l}.
\end{align}
\end{corollary}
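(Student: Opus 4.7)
The plan is to reduce the claim directly to Lemma \ref{ddha decomp l} by exploiting the block decomposition $\uvtr=\bigoplus_{l=0}^{r}\uvtr_{l}$ already used throughout \S3. First I would verify that this direct sum is in fact a decomposition of $\uvtr$ as an $\calh_r$-module: the symmetric group $\fsr$ preserves the $\un$-rank of each basis vector $\eta_\bi$, so each summand $\uvtr_l$ is $\Psi(\bbc\fsr)$-stable; and by Lemma \ref{l rep of ddha}(5.2), for every $l'\in\{0,1,\dots,r\}$ and every $\sigma\in\fsl[l']$ the operator $\Xi(\bx_\sigma^{(l')})$ stabilizes $\uvtr_{l'}$ and annihilates $\uvtr_k$ for $k\ne l'$. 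Since $\calh_r$ is generated by $\{\bs_i\}$ together with the $\bx_\sigma^{(l')}$, this forces each $\uvtr_l$ to be an $\calh_r$-submodule.

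Next I would identify the induced $\calh_r$-action on a single summand $\uvtr_l$. By the discussion above, all generators $\bx_\sigma^{(k)}$ with $k\ne l$ act as zero on $\uvtr_l$, while the remaining generators $\{\bs_i\mid 1\le i\le r-1\}\cup\{\bx_\sigma^{(l)}\mid \sigma\in\fsl\}$ satisfy the defining relations of $\calh^l_r$ (this is exactly the content of Lemma \ref{l rep of ddha}(1)-(2) together with relation (\ref{ddha 6})). Hence the $\calh_r$-module structure on $\uvtr_l$ factors through the natural surjection $\calh_r\twoheadrightarrow \calh^l_r$ that kills all $\bx^{(k)}$ with $k\ne l$. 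Applying Lemma \ref{ddha decomp l} then gives
\begin{equation*}
\uvtr_l\;\cong\;\bigoplus_{\lambda\in \text{Par}(l,n)}(D^\lambda_l)^{\oplus \dim L^\lambda_n}
\end{equation*}
as $\calh^l_r$-modules, and hence as $\calh_r$-modules via the surjection.

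The only mildly delicate point is that irreducibility of each $D^\lambda_l$ as an $\calh_r$-module is not strictly weaker than its irreducibility as an $\calh^l_r$-module, so one must confirm no ``extra'' $\calh_r$-submodules appear; but this is immediate, since the generators of $\calh_r$ outside $\calh^l_r$ act trivially on $\uvtr_l$, so every $\calh_r$-submodule of $\uvtr_l$ is automatically an $\calh^l_r$-submodule. Summing the decompositions of the $\uvtr_l$ over $l=0,1,\dots,r$ yields the desired formula. I do not anticipate any real obstacle; the work was already carried out in Lemma \ref{ddha decomp l}, and the corollary is just the bookkeeping step that packages the level-by-level decompositions into a single $\calh_r$-module statement.
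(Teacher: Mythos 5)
Your proposal is correct and is exactly the argument the paper intends (the paper states the corollary without proof as an immediate consequence of Lemma \ref{ddha decomp l}): the rank decomposition $\uvtr=\bigoplus_l\uvtr_l$ is $\calh_r$-stable, the generators $\bx^{(k)}_\sigma$ with $k\ne l$ kill $\uvtr_l$ so that submodules for $\calh_r$ and $\calh^l_r$ coincide there, and summing the level-$l$ decompositions gives the result. Note only that the multiplicity in the corollary's display should read $\dim L^\lambda_n$ (as in Lemma \ref{ddha decomp l} and as you use it), the subscript $l$ in the paper's statement being a typo.
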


\subsection{} We have the following Levi version of  Schur-Weyl duality from  $\GL(n+1,\bbc)$ to $\GL(n,\bbc)\times \Gm$.

 \begin{theorem}\label{res swd} (Levi Schur-Weyl duality)
  Keep the above notations. The following statements hold.
 \begin{itemize}
 \item[(1)] $\End_{D(n,r)}(\uvtr)=\bbc\Phi(\GL(V)\times\Gm)$.
 \item[(2)] The following duality holds:
   \begin{align}\label{Res SW}
  \End_{\scrl(n,r)}(\uvtr)&=D(n,r);\cr
 \End_{D(n,r)}(\uvtr)&=\scrl(n,r).
  \end{align}
\end{itemize}
\end{theorem}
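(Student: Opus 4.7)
The plan is to deduce the Levi Schur-Weyl duality from a single multiplicity-free bimodule decomposition of $\uvtr$ under $(\GL(V)\times\Gm)\times\calh_r$, and then invoke the classical double-centralizer theorem. First, I would assemble that decomposition. The inclusion $\Gm\hookrightarrow \GL_{n+1}$, $c\mapsto \diag(1,\ldots,1,c)$, acts on $\uvtr_l$ by the scalar $c^{r-l}$, so $\uvtr_l$ is exactly the $\Gm$-weight space of weight $r-l$. The proof of Lemma \ref{ddha decomp l} already produces the $(\GL(V)\times\calh^l_r)$-bimodule structure on $\uvtr_l$ as $\bigoplus_{\lambda\in\text{Par}(l,n)} L^\lambda_n\otimes D^\lambda_l$---the left factor arising from the classical $(\GL_n,\mathfrak{S}_l)$-duality on each $\uvtr_J$, and $D^\lambda_l$ being the irreducible $\calh^l_r$-module that bundles the Specht pieces $S^\lambda_I$ over all $I\subset\underline{r}$ with $\#I=l$. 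Tagging the left factor with its $\Gm$-weight $r-l$ and summing over $l$ then gives a $(\GL(V)\times\Gm)\otimes\calh_r$-bimodule decomposition
\begin{equation*}
\uvtr \;\cong\; \bigoplus_{l=0}^{r}\bigoplus_{\lambda\in\text{Par}(l,n)} \widetilde L^{\lambda,l}\otimes D^\lambda_l,
\end{equation*}
where $\widetilde L^{\lambda,l}$ denotes $L^\lambda_n$ equipped with $\Gm$-weight $r-l$.

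Next, I would check multiplicity-freeness on each tensor side. The $(\GL(V)\times\Gm)$-modules $\widetilde L^{\lambda,l}$ are irreducible and pairwise non-isomorphic: two with $l\ne l'$ are separated by the $\Gm$-weight, and two with $l=l'$ by the $\GL(V)$-highest weight. The $\calh_r$-modules $D^\lambda_l$ are irreducible by Lemma \ref{ddha decomp l}; two with distinct $l$'s are separated by the rank idempotent $x^l_e\in D(n,r)$ (identity on $\uvtr_l$, zero on every $\uvtr_k$ with $k\ne l$) in conjunction with the vanishing relation (\ref{ddha 6}), while two with equal $l$ and distinct $\lambda$'s are already inequivalent as $\bbc\mathfrak{S}_l$-modules.

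At this stage the double-centralizer theorem closes the argument. Both $\scrl(n,r)$ and $D(n,r)$ act semisimply on $\uvtr$---the former because $\GL(V)\times\Gm$ is reductive in characteristic zero, the latter by the explicit irreducible decomposition of Corollary \ref{ddha decomp full}---and the multiplicity-free bimodule structure, together with Jacobson density, identifies
\begin{equation*}
\scrl(n,r)\cong\bigoplus_{l,\lambda}\End_{\bbc}(\widetilde L^{\lambda,l}),\qquad D(n,r)\cong\bigoplus_{l,\lambda}\End_{\bbc}(D^\lambda_l),
\end{equation*}
each realized as the full centralizer of the other inside $\End_\bbc(\uvtr)$. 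This yields $\End_{\scrl(n,r)}(\uvtr)=D(n,r)$ and $\End_{D(n,r)}(\uvtr)=\scrl(n,r)$, which is exactly part (2); part (1) is merely the second equality rewritten through the definition $\scrl(n,r)=\bbc\Phi(\GL(V)\times\Gm)$.

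The main obstacle I expect will be verifying rigorously that $D^\lambda_l$ and $D^{\lambda'}_{l'}$ are non-isomorphic as full $\calh_r$-modules (not merely as $\calh^l_r$-modules in which they were originally constructed); this is precisely the step where the ideal-type relation (\ref{ddha 6}) and the rank-idempotents $x^l_e$ of the full DDHA are essential rather than decorative. Once that separation is secured, the rest is a standard Artin--Wedderburn computation.
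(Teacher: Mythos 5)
Your argument is correct, but it reaches the theorem by a genuinely different route than the paper. The paper proves the hard inclusion $\End_{D(n,r)}(\uvtr)\subseteq\scrl(n,r)$ by a hands-on computation inside $S(n+1,r)$: since $\bbc\Psi(\fsr)\subseteq D(n,r)$, any $\phi\in\End_{D(n,r)}(\uvtr)$ already lies in $\bbc\Phi(\GL(\uV))=S(n+1,r)$ by classical Schur--Weyl duality; writing $\phi=\sum_l\phi_l$ in the $\xi_{\bi,\bj}$-basis, commutation with the operators $x^{\ul}_\sigma$ forces $\phi$ to stabilize each $\uvtr_l$ and then, on $\uvtr_{\ul}=V^{\otimes l}\otimes\eta^{\otimes r-l}$, to commute with $\fsl$, so classical duality for $(\GL(V),\fsl)$ together with the structural Lemma~\ref{gen G} puts each $\phi_l$ in $\scrl(n,r)_l$; the other equality $\End_{\scrl(n,r)}(\uvtr)=D(n,r)$ is then obtained from the double commutant theorem using the complete reducibility of Corollary~\ref{ddha decomp full}. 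You instead build the multiplicity-free $(\scrl(n,r),D(n,r))$-bimodule decomposition of $\uvtr$ first (separating the $l$-levels by the $\Gm$-weight $r-l$ and the $\lambda$-blocks by $\GL_n$-highest weights), check pairwise non-isomorphy on both sides, and extract both centralizer statements at once by density/Artin--Wedderburn. This effectively proves Corollary~\ref{coro bran d f}(1) first and reverses the paper's logical order, since in the paper that bimodule decomposition is deduced \emph{from} Theorem~\ref{res swd}; to avoid circularity you must, as you indicate, upgrade Lemma~\ref{ddha decomp l} to a bimodule statement by identifying the multiplicity space of $L^\lambda_n$ in $\uvtr_l$ with the highest-weight space $\bigoplus_I S^\lambda_I=D^\lambda_l$, using that $D(n,r)$ commutes with $\GL_n$, and you must verify that the $D^\lambda_l$ are pairwise non-isomorphic over the full $\calh_r$ --- exactly the point you flag, and the idempotents $x^l_e\in D(n,r)$ together with relation~(\ref{ddha 6}) do settle it. The trade-off: the paper's computation is more elementary and exhibits the graded pieces $\phi_l\in\scrl(n,r)_l$ explicitly, while your argument is shorter and more conceptual and yields the classification of irreducible $D(n,r)$-modules (the paper's Corollary~4.5) and the branching duality as immediate by-products, at the cost of the extra care just described.
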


\begin{proof} (1) Recall that $\GL(V)\times \Gm$ can be regarded as a closed subgroup of $\GL(\uV)$ (see \S\ref{para sub}).
By Lemma \ref{l rep of ddha}(3) (along with some direct computation), $\bbc\Phi(\GL(V)\times \Gm )\subset \End_{D(n,r)}(\uvtr)$. We need to show the opposite inclusion.

Note that $D(n,r)=\Xi(\calh_r)$. So $\End_{D(n,r)}(\uvtr)\subset \End_{\bbc\fsr}(\uvtr)=\bbc\Phi(\GL(\uV))$. The last equation is due to the classical Schur-Weyl duality for $\GL(\uV)$ and $\fsr$. Thus, for any $\phi\in \End_{D(n,r)}(\uvtr)$, we can write $\phi=\sum_{h\in \GL(\uV)}a_h\Phi(h)$ for $a_h\in\bbc$.
In the following, we will show that $\phi$ must lie in $\bbc\Phi(\GL(V)\times\Gm)$.

We identify $\GL(\uV)$ with $\GL(n+1,\bbc)$, and $\Phi(\GL(\uV))$ with $S(n+1,r)$ by the classical theory of  Schur algebras. Thus, we rewrite $\phi=\sum_{(\bi,\bj)\in\caln^r\times\caln^r\slash\fsr} a_{\bi\bj}\xi_{\bi,\bj}$ for $a_{\bi\bj}\in \bbc$ satisfying $a_{\bi\bj}=a_{\tau.\bi,\tau.\bj}$ for any $\tau\in\fsr$. Furthermore,  $\phi=\sum_{l=0}^r \phi_l$ with $$\phi_l=\sum_{\overset{(\bi,\bj)\in\caln^r\times\caln^r\slash\sim}{\text{rk}_{\un}\bj=l}}
a_{\bi\bj}\xi_{\bi,\bj}.$$
In this view, by Lemma \ref{gen G} we only need to prove that all $\phi_l$ lie in the Levi subalgebra $\scrl(n,r)$. The arguments proceed in steps.

(1-1) By the assumption $\phi\in \End_{D(n,r)}(\uvtr)$, $\phi\circ x^{\ul}_\sigma=x^{\ul}_\sigma\circ \phi$ for all $\sigma\in\fsl$, and $l=0,1,\ldots,r$.
 We first claim that $\phi$ stabilise $\uvtr_l$ for every $l$.

Actually, if not so, then there must be a basis element $\eta_\bj\in \uvtr_l$ not obeying the stability  claimed above for $\phi$.  From (\ref{Ups rep}) it follows $\phi(\uvtr_l)=\phi_l(\uvtr_l)$.  Violation of the stability implies that $\phi_l(\eta_\bj)=\eta_l+\eta_k$ with $\eta_l\in\uvtr_l$ but $\eta_k\notin\uvtr_l$. Suppose $\bj=\tau(\bj'_l(n+1)^{r-l})$ for $\tau\in\fsr$, and $\bj'_l\in \un^l$. Then $(\tau x^{\ul}_{\id}\tau^{-1})(\eta_\bj)=\eta_\bj$. On the other side, $(\tau x^{\ul}_{\id}\tau^{-1})\circ \phi (\eta_\bj)=\phi\circ (\tau x^{\ul}_{\id}\tau^{-1})(\eta_\bj)$. This leads to a contradiction that $\tau x^{\ul}_{\id}\tau^{-1}(\eta_l+\eta_k)=\eta_l+\eta_k$. So the claim is true.

(1-2) By (1-1), $\phi_l$ is commutative with $x^{\ul}_\sigma$ for all $\sigma\in\fsl$. From this, by the same arguments as in (1-1) it follows that $\phi_l$ stabilises $\uvtr_{\ul}$.  Hence $\phi_l$ is actually commutative with $\fsl$ in $\End_\bbc(\uvtr_{\ul})$.

Recall that $\uvtr_{\ul}=V^{\otimes l}\times \eta^{\otimes r-l}$, and $x^{\ul}_\sigma=\Psi_l^V(\sigma)\otimes \id^{\otimes l-r}$. By applying the classical Schur Weyl duality, we have that $\phi|_{\uvtr_{\ul}}\in \Phi(\GL(V)|_{\uvtr_{\ul}}=\Phi(\GL(V)\times\Gm)|_{\uvtr_{\ul}}$. Note that $\phi|_{\uvtr_{\ul}}=\phi_l|_{\uvtr_{\ul}}$. From this, it is deduced that $\phi_l|_{\uvtr_{\ul}}\in \Phi(\GL(V)\times\Gm)|_{\uvtr_{\ul}}$.  By definition, we can further deduce that $\phi_l|_{\uvtr_{l}}\in \Phi(\GL(V)\times \Gm)|_{\uvtr_l}$.  Keep in mind  $\phi_l|_{\uvtr_k}=0$ for $k\ne l$  in the same sense as in \S\ref{Phi Upsilon}. By definition and Lemma \ref{gen G}(2), it follows that $\phi_l\in \scrl(n,r)_l\subset\scrl(n,r)$.

By the analysis in the beginning, we accomplish the proof of the first statement.

(2) The second equation in (\ref{Res SW}) follows from (1). Note that 
$\uvtr$ is completely reducible over $D(n,r)$ (Corollary \ref{ddha decomp full}). The first equation in (\ref{Res SW}) follows from the classical double commutant theorem (see \cite[\S4.1.5]{GW}).
\end{proof}

 As a consequence of Theorem \ref{res swd}, we have the following result on classification of irreducible $D(n, r)$-modules.

\begin{corollary} The set consisting of $D^\lambda_l$ for $\lambda\in \text{Par}(l,n)$ and $l=0,1,\ldots,r$ form a representative set of the isomorphism class of irreducible $D(n, r)$-modules.
\end{corollary}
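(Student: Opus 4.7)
The plan is to deduce the classification directly from the decomposition in Corollary \ref{ddha decomp full} together with the double centralizer theorem, with the semisimplicity of $D(n,r)$ being the key ingredient.

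First, I would observe that Lemma \ref{ddha decomp l} already supplies the irreducibility of each $D^\lambda_l$ as a $\calh^l_r$-module, hence as a $D(n,r)$-module via $\Xi$ (recall that $D(n,r)_k$ annihilates $\uvtr_l$ for $k\neq l$). Then Corollary \ref{ddha decomp full} exhibits $\uvtr$ as a direct sum of these modules, so in particular $\uvtr$ is a semisimple $D(n,r)$-module. By construction $D(n,r)\hookrightarrow\End_\bbc(\uvtr)$ acts faithfully, and Theorem \ref{res swd} identifies its centralizer as $\scrl(n,r)$; applying the double centralizer theorem (as in \cite[\S4.1.5]{GW}) yields that $D(n,r)$ is a semisimple algebra whose simple modules are in bijection with the simple summands appearing in the isotypic decomposition of $\uvtr$.

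Next I would verify pairwise non-isomorphism of $\{D^\lambda_l\}$. The idempotents $x_e^l\in D(n,r)$ (defined in Section~3) act as the identity on $\uvtr_l$ and as zero on $\uvtr_k$ for $k\neq l$; hence $x_e^l$ acts as the identity on $D^\lambda_l$ and as zero on $D^{\lambda'}_{l'}$ whenever $l\neq l'$, separating modules with distinct $l$. For fixed $l$, the subalgebra $\bbc\Psi(\fsl)\subset D(n,r)_l$ acts on $D^\lambda_l$ as $\binom{r}{l}$ copies of the Specht module $S^\lambda_l$ by the second part of Lemma \ref{ddha decomp l}; since the $S^\lambda_l$ are pairwise non-isomorphic as $\bbc\fsl$-modules for distinct $\lambda\in\text{Par}(l,n)$, the $D^\lambda_l$ are already non-isomorphic on restriction, hence \emph{a fortiori} non-isomorphic as $D(n,r)$-modules.

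Finally, for exhaustion: any simple module over the semisimple algebra $D(n,r)$ must occur as a direct summand of the faithful module $\uvtr$, which has already been decomposed in Corollary \ref{ddha decomp full} into copies of the $D^\lambda_l$; thus every simple $D(n,r)$-module is isomorphic to exactly one $D^\lambda_l$. I do not anticipate a serious obstacle here: all the hard work is packaged in Lemma \ref{ddha decomp l} and Theorem \ref{res swd}. The only subtle point to state carefully is that semisimplicity of $D(n,r)$ is not assumed \emph{a priori} but follows from the faithful semisimple action on $\uvtr$ via the double centralizer identification with $\End_{\scrl(n,r)}(\uvtr)$.
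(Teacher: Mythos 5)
Your proposal is correct, and its core difference from the paper lies in the exhaustion step. The paper first gets semisimplicity of $D(n,r)$ (hence of each $D(n,r)_l$) from Theorem \ref{res swd} together with the double commutant theorem, quotes Lemma \ref{ddha decomp l} for irreducibility and pairwise non-isomorphism of the $D^\lambda_l$ over $D(n,r)_l$, and then proves exhaustion by the dimension count $\sum_{\lambda\in\text{Par}(l,n)}(\dim D^\lambda_l)^2=\binom{r}{l}^2d(n,l)=\dim D(n,r)_l$ coming from Lemma \ref{sum lem}. You instead observe that $\uvtr$ is a \emph{faithful} completely reducible $D(n,r)$-module (faithfulness is automatic since $D(n,r)=\Xi(\calh_r)\subset\End_\bbc(\uvtr)$, complete reducibility is Corollary \ref{ddha decomp full}), so $D(n,r)$ is semisimple and every simple $D(n,r)$-module must occur among the summands of $\uvtr$. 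This bypasses the dimension count entirely; indeed you do not even need the identification $\End_{D(n,r)}(\uvtr)=\scrl(n,r)$ for semisimplicity, since faithfulness plus semisimplicity of the module already forces the radical to vanish. What the paper's route buys in exchange is the finer numerical information $\dim D(n,r)_l=\sum_\lambda(\dim D^\lambda_l)^2$, i.e.\ the Wedderburn structure of each block $D(n,r)_l$; yours is shorter and more module-theoretic. There is no circularity in your use of Corollary \ref{ddha decomp full}, which depends only on Lemma \ref{ddha decomp l}.

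One small repair in your separation of $D^\lambda_l$ and $D^\mu_l$ for fixed $l$: the subgroup $\fsl\leq\fsr$ acting through $\Psi$ permutes the index sets $I$, hence does not stabilize the individual summands $S^\lambda_I$, so the restriction of $D^\lambda_l$ to $\bbc\Psi(\fsl)$ is \emph{not} literally $\binom{r}{l}$ copies of $S^\lambda_l$ (also $\Psi(\fsl)\not\subset D(n,r)_l$; only $\Psi|_l(\fsl)=\Psi(\fsl)x_e^l$ is). The clean fix is to use the idempotent $x_e^{\ul}\in D(n,r)$: any $D(n,r)$-isomorphism $D^\lambda_l\rightarrow D^\mu_l$ commutes with all $x^{\ul}_\sigma$, hence restricts to a $\bbc\fsl$-isomorphism $x_e^{\ul}D^\lambda_l=S^\lambda_{\ul}\rightarrow x_e^{\ul}D^\mu_l=S^\mu_{\ul}$, forcing $\lambda=\mu$. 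Your use of the idempotents $x_e^l$ to separate different values of $l$ is fine as stated.
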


\begin{proof}
Since $\scrl(n,r)$ is semisimple, so is $D(n,r)$ by Theorem \ref{res swd} and the Double Commutant  Theorem. Furthermore, each $D(n,r)_l$ is semisimple for $l=0,1,\ldots,r$ by Lemma \ref{sum lem}(1). Thanks to Lemma \ref{ddha decomp l}, $D^\lambda_l$ is an irreducible $D(n,r)_l$-module for any $\lambda\in \text{Par}(l,n)$, and $D^\lambda_l\not\cong D^\mu_l$ for $\lambda\neq\mu$.  Moreover,
\begin{align*}
\sum\limits_{\lambda\in\text{Par}(l,n)}(\dim D^\lambda_l)^2&=\sum\limits_{\lambda\in\text{Par}(l,n)}{r\choose l}^2(\dim S_l^{\lambda})^2\cr
&={r\choose l}^2\sum\limits_{\lambda\in\text{Par}(l,n)}(\dim S_l^{\lambda})^2\cr
&={r\choose l}^2 d(n,l)
\cr
&=\dim D(n,r)_l    \;\;\;\; (\mbox{Lemma \ref{sum lem}(4)}).
\end{align*}
Hence, the set $\{D^\lambda_l\mid \lambda\in \text{Par}(l,n)\}$ exhausts all representatives of isomorphism classes of irreducible  $D(n,r)_l$-modules. Consequently, the desired assertion holds.
\end{proof}

\subsection{Branching duality formula}\label{interlaces} From Theorem \ref{res swd}
 we derive a dimension relation between  irreducible modules from $(\GL_n,\fsr)$-duality, and irreducible modules from $(\GL_{n+1},\fsr)$-duality.

\begin{corollary}\label{coro bran d f}   The following statements hold.
\begin{itemize}
\item[(1)] Under the action of $\GL(n,\bbc)\times \calh_r$, the space of $r$-tensors over $\bbc^{n+1}$ decomposes as
$$(\bbc^{n+1})^{\otimes r} \cong\bigoplus_{l=0}^r\bigoplus_{\lambda\in \text{Par}(l,n)}L^{\lambda}_n\otimes D^{\lambda}_l.$$

\item[(2)] (Branching duality formula) In the Schur-Weyl duality,  the irreducible pairs $(L^\mu_{n+1}, S^\mu_{r})$ ($\mu\in \text{Par}(r,n+1)$) for   $(\GL_{n+1},\fsr)$ and the irreducible pairs $(L^\lambda_n, S^\lambda_l$) ($\lambda\in \text{Par}(l,n)$) for   $(\GL_n,\fsl)$ with $l=1,\ldots,r$ satisfy   the following branching duality formula
\begin{align}\label{bran d formula}
\sum\limits_{\mu\in \text{Par}(r,n+1)}\delta_{\lambda\lesssim\mu}\dim S^{\mu}_r={r\choose l}\dim S^{\lambda}_l
\end{align}
for any $\lambda\in \text{Par}(l,n)$ ($l\leq r$).
\end{itemize}
\end{corollary}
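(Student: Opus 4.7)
My plan is to deduce part (1) from Corollary \ref{ddha decomp full} together with the $\GL_n$-equivariance already present in the proof of Lemma \ref{ddha decomp l}, and then to extract the branching duality formula (2) by comparing two distinct $\GL_n$-decompositions of $(\bbc^{n+1})^{\otimes r}$.

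For part (1), the key observation is that $\GL_n=\GL(V)$ commutes with the $\calh_r$-action on $\uvtr$ by Lemma \ref{l rep of ddha}(3), so each $D^\lambda_l$-isotypic component is automatically $\GL_n$-stable. Inside the proof of Lemma \ref{ddha decomp l}, the isomorphism (\ref{J part SW}) exhibits each $\uvtr_J$ as $\bigoplus_{\lambda\in\text{Par}(l,n)}L^\lambda_n\otimes S^\lambda_J$ in a $(\GL_n,\text{Sym}(J))$-equivariant fashion, and this Sym$(J)$-action is the relevant part of $\calh^l_r$ on $\uvtr_J$. Summing over $J\subset\ur$ with $\#J=l$ and grouping the $\dim L^\lambda_n$ copies of $\bigoplus_J S^\lambda_J=D^\lambda_l$ arising from a basis of each $L^\lambda_n$ yields an isomorphism $\uvtr_l\cong\bigoplus_\lambda L^\lambda_n\otimes D^\lambda_l$ of $(\GL_n,\calh^l_r)$-modules. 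Summing over $l=0,\ldots,r$ and invoking the orthogonality relation (\ref{ddha 6}) between different $\calh^l_r$ inside $\calh_r$ promotes this to the full $(\GL_n,\calh_r)$-decomposition claimed in (1).

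For part (2), I will first apply the classical $(\GL_{n+1},\fsr)$ Schur--Weyl duality to write
$$(\bbc^{n+1})^{\otimes r}\cong\bigoplus_{\mu\in\text{Par}(r,n+1)}L^\mu_{n+1}\otimes S^\mu_r,$$
then restrict to $\GL_n$ and apply the branching rule recalled in Section \ref{sec: 4.1} to each $L^\mu_{n+1}$. This produces the $\GL_n$-decomposition
$$(\bbc^{n+1})^{\otimes r}\big|_{\GL_n}\cong\bigoplus_{l=0}^r\bigoplus_{\lambda\in\text{Par}(l,n)}L^\lambda_n\otimes\Bigl(\bigoplus_{\mu\in\text{Par}(r,n+1)}\delta_{\lambda\lesssim\mu}\,S^\mu_r\Bigr).$$
Comparing this with the $\GL_n$-decomposition supplied by part (1), namely $\bigoplus_{l,\lambda}L^\lambda_n\otimes D^\lambda_l$, and reading off the multiplicity of the irreducible $\GL_n$-module $L^\lambda_n$ on both sides gives $\dim D^\lambda_l=\sum_\mu\delta_{\lambda\lesssim\mu}\dim S^\mu_r$. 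Substituting the formula $\dim D^\lambda_l={r\choose l}\dim S^\lambda_l$ provided by Lemma \ref{ddha decomp l} then yields (\ref{bran d formula}).

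The only subtle point, rather than a true obstacle, is the identification in (1) of the multiplicity space of $D^\lambda_l$ as precisely $L^\lambda_n$ and not merely some $\GL_n$-module of the correct dimension. This is settled by the explicit realisation, used in the proof of Lemma \ref{ddha decomp l}, of the Specht summand $S^\lambda_I\subset\uvtr_I$ as the space of $\frakN$-fixed vectors in the $\lambda$-weight space; it is exactly this explicit pairing that matches $D^\lambda_l$ with $L^\lambda_n$ canonically. Once that identification is in place, (2) reduces to a pure multiplicity count.
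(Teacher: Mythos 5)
Your proof is correct, and part (2) is exactly the paper's argument: compare the $\GL_n$-multiplicities of $L^\lambda_n$ coming from the decomposition in (1) (which give ${r\choose l}\dim S^\lambda_l$) with those coming from classical $(\GL_{n+1},\fsr)$ Schur--Weyl duality followed by the interlacing branching rule (which give $\sum_\mu\delta_{\lambda\lesssim\mu}\dim S^\mu_r$). Where you genuinely diverge is in how you obtain (1). The paper gets it in one line from Theorem \ref{res swd} together with the classical double commutant theorem, plus the remark that $\Gm$ acts on each $\uvtr_l$ by a scalar, so the abstract machinery hands over the canonical bimodule pairing $L^\lambda_n\leftrightarrow D^\lambda_l$ at once. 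You instead rebuild the $(\GL_n,\calh_r)$-decomposition by hand from the ingredients inside the proof of Lemma \ref{ddha decomp l}: the per-summand isomorphisms (\ref{J part SW}), the commutation of $\GL_n$ with $\Xi(\calh_r)$ from Lemma \ref{l rep of ddha}(3), and the canonical realisation of $S^\lambda_I$ as the $\frakN$-fixed vectors in the $\lambda$-weight space, which is what lets you glue the copies of $S^\lambda_J$ over varying $J$ into a single multiplicity space identified with $L^\lambda_n$ compatibly with the $\fsr$-part of $\calh^l_r$ (equivalently, you are identifying the $\calh^l_r$-module $\Hom_{\GL_n}(L^\lambda_n,\uvtr_l)$ with $D^\lambda_l$). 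Your route is more explicit and does not use the full strength of the Levi Schur--Weyl duality, only the commutant inclusion and the classical duality on each $\uvtr_J$; the paper's route is shorter but rests on Theorem \ref{res swd} and the general double commutant theorem. The gluing step is the one place where care is needed, and your appeal to the canonical highest-weight/$\frakN$-invariant realisation closes it, so there is no gap.
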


\begin{proof}
(1) Note that all $c\in\Gm$ act on $\uvtr_l$ by a scalar for any $l$. Keep Lemma \ref{ddha decomp l} in mind. So the decomposition follows from Theorem \ref{res swd} and the classical double commutant theorem (see \cite[\S4.16]{GW}).

(2) It follows from (1) that as a $\GL(n,\bbc)$-module
$$(\bbc^{n+1})^{\otimes r}\cong \bigoplus_{l=0}^r\bigoplus_{\lambda\in \text{Par}(l,n)}(L^{\lambda}_n)^{\bigoplus({r\choose l}\dim S^{\lambda}_l)}=
\sum\limits_{l=0}^r\sum\limits_{\lambda\in \text{Par}(l,n)}\big({r\choose l}\dim S^{\lambda}_l\big)L^{\lambda}_n.$$
On the other hand, according to the classical Schur-Weyl duality, $(\bbc^{n+1})^{\otimes r}$ has the following decomposition, as a $\GL(n+1,\bbc)$-module:
\begin{align*}
(\bbc^{n+1})^{\otimes r}&\cong\bigoplus_{\mu\in \text{Par}(r,n+1)}(L^{\mu}_{n+1})^{\oplus\dim S^{\mu}_r}\cr
&=\sum\limits_{\mu\in \text{Par}(r,n+1)}\sum\limits_{\lambda\in \text{Par}(l,n)}(\delta_{\lambda\lesssim\mu}\dim S^{\mu}_r) L^{\lambda}_n\cr
&=\sum\limits_{l=0}^r\sum\limits_{\lambda\in \text{Par}(l,n)}\sum\limits_{\mu\in \text{Par}(r,n+1)}(\delta_{\lambda\lesssim\mu}\dim S^{\mu}_r) L^{\lambda}_n
\end{align*}
Compare both decompositions. The equation (\ref{bran d formula}) follows.
\end{proof}

\begin{remark}
(1) When $l=r$, the formula (\ref{bran d formula}) becomes trivial.

(2) When $l=r-1$, the formula (\ref{bran d formula}) becomes
$$\sum\limits_{\mu\in \text{Par}(r,n+1)}\delta_{\lambda\lesssim\mu}\dim S^{\mu}_r=r\dim S^{\lambda}_{r-1}$$
for $\lambda\in \text{Par}(r-1,n)$. This formula can be regarded as a duality to the following classical branching rule (see \cite[Corollary 9.2.7]{GW})
\begin{align}\label{sym gp branch rule}
\text{Res}^{\fsr}_{\frak{S}_{r-1}}S^\mu_r\cong \bigoplus_{\lambda\in\text{Par}(r-1,n)}
\delta_{\lambda\lesssim\mu}S^{\lambda}_{r-1}.
\end{align}
\end{remark}

\section{Enhanced tensor invariants and parabolic Schur-Weyl duality}\label{sec 5}
Keep the notations as before. In this section, we give the answer to Question \ref{ques}, then establish the   parabolic Schur-Weyl dualities and give some applications.

\subsection{}
Recall the notations in \S\ref{enhanced space}. In particular,  $\uV$ has a basis $\{\eta_1,\ldots,\eta_n; \eta_{n+1}\}$ with $\eta_i\in V$ for $i=1,\ldots,n$; $\eta_{n+1}=\eta$. Then $\uvtr$ has a basis $\{\eta_{\bi}=\eta_{i_1}\otimes \eta_{i_2}\otimes\cdots\otimes\eta_{i_r}\mid \bi=(i_1,i_2,\ldots,i_n)\in \caln^r\}$.
Let us recall  that $\uvtr$ naturally becomes a representation space of $\cale(n,r)$ (and of $S(n+1,r)$ more generally) by definition as in (\ref{Ups rep}).

  We first have the following basic  observation.

\begin{lemma}\label{invar subspace} Denote by $\calp(n,r)_{l,0}$ the subspace of $\calp(n,r)$ spanned by $\xi_{\bi_l(n+1)^{r-l}, (n+1)^r}$ with $\bi_l$ ranging over $\un^l$,  and by  $\calp(n,r)_{\geq0,0}$ the direct sum of all $\calp(n,r)_{l,0}$ with $l=0,1,\ldots,r$. Then the image of $\calp(n,r)_{\geq0,0}$ is exactly $(\uvtr)^{\fsr}$.
\end{lemma}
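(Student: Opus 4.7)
The plan is to compute directly the action of each basis element $\xi_{\bi_l(n+1)^{r-l},(n+1)^r}$ of $\calp(n,r)_{\geq 0,0}$ on the basis $\{\eta_\bk\mid \bk\in\caln^r\}$ of $\uvtr$ using the formula $\xi(\eta_\bk)=\sum_\bp \xi(c_{\bp,\bk})\eta_\bp$ recalled in \eqref{Ups rep}, together with the pairing $\xi_{\bi,\bj}(c_{\bp,\bk})=1$ iff $(\bp,\bk)\sim(\bi,\bj)$. The key observation will be that for $\xi=\xi_{\bi_l(n+1)^{r-l},(n+1)^r}$, the condition $(\bp,\bk)\sim(\bi_l(n+1)^{r-l},(n+1)^r)$ forces $\bk=\sigma.(n+1)^r=(n+1)^r$ for some $\sigma\in\fsr$; hence $\xi$ annihilates $\eta_\bk$ unless $\bk=(n+1)^r$, in which case
\[
\xi_{\bi_l(n+1)^{r-l},(n+1)^r}(\eta^{\otimes r})=\sum_{\bp\in \fsr.(\bi_l(n+1)^{r-l})}\eta_\bp,
\]
which is manifestly $\fsr$-invariant.

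With this computation in hand, I will show that the image (as an operator space acting on $\uvtr$) of $\calp(n,r)_{\geq 0,0}$ coincides with the span of these orbit-sums. For the inclusion in $(\uvtr)^{\fsr}$, each such orbit-sum is by construction $\fsr$-invariant. For the reverse inclusion, I will use that the $\fsr$-orbits on $\caln^r$ are indexed by multisets of size $r$ in $\caln=\{1,\ldots,n+1\}$, and that every such orbit admits a representative of the standard form $\bi_l(n+1)^{r-l}$ with $\bi_l\in \un^l$ (by ordering the $n+1$'s to the tail); the corresponding orbit-sums form the canonical basis of $(\uvtr)^{\fsr}$.

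The step I expect to require the most care is merely bookkeeping: verifying that as $l$ ranges over $0,1,\ldots,r$ and $\bi_l$ ranges over $\un^l$, the set of orbits $\fsr.(\bi_l(n+1)^{r-l})$ exhausts all $\fsr$-orbits on $\caln^r$ exactly once up to the $\fsl$-redundancy coming from permutations of entries of $\bi_l$ — so that the resulting orbit-sums span the entire $\binom{n+r}{r}$-dimensional invariant space $(\uvtr)^{\fsr}$. Once this matching is established, the equality of the two subspaces follows at once. No deeper machinery is needed: the lemma is essentially a direct unwinding of definitions, and serves as a concrete identification of a distinguished piece of $\calp(n,r)$ with the space of symmetric tensors, to be exploited in the subsequent parabolic Schur–Weyl duality.
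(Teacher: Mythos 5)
Your proposal is correct and follows essentially the same route as the paper: both compute that $\xi_{\bi_l(n+1)^{r-l},(n+1)^r}$ kills every basis vector except $\eta^{\otimes r}$ and sends $\eta^{\otimes r}$ to the orbit-sum $\sum_{\bp\in\fsr.(\bi_l(n+1)^{r-l})}\eta_\bp$, and then observe that these orbit-sums, as $l$ and $\bi_l$ vary, run over the canonical basis of $(\uvtr)^{\fsr}$. The paper merely phrases this more tersely (calling the orbit-sum a ``fundamental invariant'' in $(\uvtr_l)^{\fsr}$ and summing over $l$), while you unwind the pairing $\xi_{\bi,\bj}(c_{\bp,\bk})$ explicitly; the bookkeeping step you flag is exactly the standard parametrization of $\fsr$-orbits on $\caln^r$ by multisets and poses no difficulty.
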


\begin{proof} Recall that for any $l=0,1,\ldots,r$, $\uvtr_l$ has a basis consisting of $\eta_\bi\in \uvtr_{l}$ with $\bi$ ranging over $\caln^r_l$. Note that $\bi\sim \bi_{l(n+1)^{r-l}}$ with $\bi_l\in \un^l$. By definition,  $\xi_{\bi_l(n+1)^{r-l}, (n+1)^r} (\eta^{\otimes r})$ is a fundamental invariant in $(\uvtr_l)^{\fsr}$ arising from the $\fsr$-orbit of $\eta_\bi$ (see (\ref{Ups rep})) . Correspondingly, the image of $\calp(n,r)_{l,0}$ coincides with $(\uvtr_l)^{\fsr}$. When $l$ ranges over $\{0,1,\ldots,r\}$, the lemma follows.
\end{proof}

As a result, we have the following crucial lemma.

\begin{lemma}\label{lemma enhanced}
 For any $\phi\in \End_{\calp(n,r)}(\uvtr)$, $\phi$ must lie in $\bbc\Psi(\fsr)+D(n,r)_{\ann}$ where $D(n,r)_\ann:=\{\phi\in D(n,r)\mid \phi((\uvtr)^{\fsr})=0\}$.
\end{lemma}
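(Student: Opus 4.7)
The plan is to reduce in three steps: first, use the Levi Schur--Weyl duality to place $\phi$ inside $D(n,r)$; second, extract a scalar from the action on $\uvtr_0$; and third, exploit the cyclicity of $(\uvtr)^{\fsr}$ under $\calp(n,r)$ furnished by Lemma~\ref{invar subspace} to kill what remains on the invariants.

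First I would observe that $\scrl(n,r)\subset \calp(n,r)$ by the definition of the parabolic Schur algebra (\S\ref{para sub}), so $\End_{\calp(n,r)}(\uvtr)\subset \End_{\scrl(n,r)}(\uvtr)$. By the Levi Schur--Weyl duality, Theorem~\ref{res swd}(2), the latter is exactly $D(n,r)$, giving $\phi\in D(n,r)$. Next I would exploit the decomposition $D(n,r)=\bigoplus_{l=0}^r D(n,r)_l$ from Lemma~\ref{sum lem}(1), in which each summand $D(n,r)_l$ acts on $\uvtr_l$ and annihilates $\uvtr_k$ for $k\ne l$. Since $\uvtr_0=\bbc\,\eta^{\otimes r}$ is one-dimensional, this forces $\phi(\eta^{\otimes r})=\lambda\,\eta^{\otimes r}$ for a unique $\lambda\in\bbc$. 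I then set
$$\phi'=\lambda\,\Psi(e)=\lambda\,\id_{\uvtr}\in\bbc\Psi(\fsr),\qquad \phi''=\phi-\phi'.$$
Both lie in $D(n,r)$, the scalar $\phi'$ commutes with all of $\calp(n,r)$, and hence $\phi''$ continues to commute with $\calp(n,r)$; moreover by construction $\phi''(\eta^{\otimes r})=0$.

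The final step is to check that $\phi''\in D(n,r)_{\ann}$, i.e.\ $\phi''(w)=0$ for every $w\in(\uvtr)^{\fsr}$. Here Lemma~\ref{invar subspace} is the key ingredient: for any such $w$ it produces some $T\in \calp(n,r)_{\geq 0,0}\subset \calp(n,r)$ with $T(\eta^{\otimes r})=w$. Using the commutation $\phi''T=T\phi''$ one obtains
$$\phi''(w)=\phi''(T\eta^{\otimes r})=T\phi''(\eta^{\otimes r})=0,$$
yielding $\phi=\phi'+\phi''\in\bbc\Psi(\fsr)+D(n,r)_{\ann}$ as required.

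I do not anticipate a serious obstacle, since the whole argument is driven by two structural facts already proved: the block decomposition of $D(n,r)$ (which makes $\uvtr_0$ a one-dimensional ``apex'' for $\phi$) and the cyclicity statement in Lemma~\ref{invar subspace} (which reduces verification on all $\fsr$-invariants to the single vector $\eta^{\otimes r}$). The only small point requiring care is the identification of the scalar $\lambda\,\id_{\uvtr}$ with an element of $\bbc\Psi(\fsr)$, which is immediate from $\Psi(e)=\id_{\uvtr}$.
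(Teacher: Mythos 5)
Your proposal is correct, and it reaches the conclusion by a slightly different route than the paper. Both arguments share the same skeleton: Theorem \ref{res swd}(2) places $\phi$ in $D(n,r)$, and Lemma \ref{invar subspace} together with commutation against $\calp(n,r)_{\geq 0,0}$ is what kills the residual part on $(\uvtr)^{\fsr}$. Where you diverge is in how the $\bbc\Psi(\fsr)$-component is split off: the paper invokes the direct-sum decomposition $D(n,r)=\bbc\Psi(\fsr)x_e\oplus\bigl(\bigoplus_{l\geq 1}\bbc\, c_{J,I}x^I_{\sigma_{l,i}}\bigr)$ of Lemma \ref{sum lem}(4) (plus Proposition \ref{enhanced swty} to see that the $\bbc\Psi(\fsr)$-part commutes, so the complementary part still does), and then shows that a commuting element supported in degrees $l\geq t>0$ annihilates the image of $\calp(n,r)_{\geq0,0}$. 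You instead use only Lemma \ref{sum lem}(1) (each $D(n,r)_l$ annihilates $\uvtr_k$ for $k\neq l$) to see that $\phi$ preserves the one-dimensional space $\uvtr_0=\bbc\,\eta^{\otimes r}$, peel off the scalar $\lambda\,\id=\lambda\,\Psi(e)$, and then run the commutation argument directly on $\phi''=\phi-\lambda\,\id$ via $\phi''(T\eta^{\otimes r})=T\phi''(\eta^{\otimes r})=0$. This is more elementary (it avoids the basis adjustment behind Lemma \ref{sum lem}(4) entirely) and in fact yields the marginally sharper statement $\phi\in\bbc\,\id+D(n,r)_{\ann}$, which is consistent with the lemma since every $\Psi(\sigma)$ acts as the identity on $(\uvtr)^{\fsr}$. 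The only point you should make explicit is that Lemma \ref{invar subspace} gives not just that the image of $\calp(n,r)_{\geq0,0}$ equals $(\uvtr)^{\fsr}$, but that every invariant is of the form $T(\eta^{\otimes r})$ with $T\in\calp(n,r)_{\geq0,0}$; this is immediate from the lemma's proof, because each spanning operator $\xi_{\bi_l(n+1)^{r-l},(n+1)^r}$ vanishes on $\uvtr_k$ for $k\geq 1$ and sends $\eta^{\otimes r}$ to the corresponding orbit-sum invariant, so the image of the subspace is exactly its image on the single vector $\eta^{\otimes r}$.
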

\begin{proof} Note that $\End_{\calp(n,r)}(\uvtr)\subset \End_{\scrl(n,r)}(\uvtr)$. From  Theorem \ref{res swd}(2), $\End_{\calp(n,r)}(\uvtr)\subset D(n,r)$. In the following, we will show that $\End_{\calp(n,r)}(\uvtr)\cap D(n,r)\subset\bbc\Psi(\fsr)+D(n,r)_{\ann}$. By Lemma \ref{sum lem}(4),
$$D(n,r)=\bbc\Psi(\fsr)x_e\oplus ( \bigoplus_{l=1}^r\bigoplus_{\overset{(J,I)\in\mathscr{S}^2_l}{ i=1,\ldots,d(n,l)}}\bbc c_{J,I} x^I_{\sigma_{l,i}}).$$
Thanks to Proposition \ref{enhanced swty}, we only need to show that
\begin{align}\label{equ: inters 0 for ed}
\End_{\calp(n,r)}(\uvtr)\cap ( \bigoplus_{l=1}^r\bigoplus_{\overset{(J,I)\in\mathscr{S}^2_l}{ i=1,\ldots,d(n,l)}}\bbc c_{J,I} x^I_{\sigma_{l, i}})\subset D(n,r)_\ann.
\end{align}
Actually, for any given nonzero $\phi=\sum_{l=t}^s \phi_l$ with $0<t\leq s\leq r$, and $\phi_l\in D(n,r)_l$, $l=t,t+1,\ldots,s$, such that $\phi\in \End_{\calp(n,r)}(\uvtr)$ and both $\phi_s,\phi_t$ are not zero.
By the assumption, for any $\xi\in \calp(n,r)_{\geq0,0}$,  we have $\xi\circ \phi=\phi\circ \xi$. Then on one side $\xi\circ \phi$ vanishes because $\phi=\sum_{l=t}^s \phi_l$ with $t>0$. Hence, $\phi\circ\xi$ must be zero for all $\xi\in \calp(n,r)_{\geq0,0}$. Thanks to Lemma \ref{invar subspace}, the image of $\calp(n,r)_{\geq0,0}$ is exactly $(\uvtr)^{\fsr}$. This means that $\phi$ annihilates the space  $(\uvtr)^{\fsr}$. 

Summing up, (\ref{equ: inters 0 for ed}) is proved. The lemma follows.
\end{proof}

Generally, we have the following theorem.

\begin{theorem} (Parabolic Schur-Weyl duality) \label{enh sch thm} Keep the notations as above, in particular $\uG=\underline{\GL(V)}$. Set  $D(n,r)^V=\{\phi\in D(n,r)\mid e^v\circ\phi\circ e^{-v}=\phi, \forall v\in V\}$. Then the following statements hold.
\begin{itemize}
\item[(1)]  $\End_{\bbc\Phi(\uG\rtimes\Gm)}(\uvtr)=D(n,r)^V$.
\item[(2)]  The above $D(n,r)^V$ can be described as
$$\bbc\Psi(\fsr)\subset D(n,r)^V\subset \bbc\Psi(\fsr)+D(n,r)_\ann.$$
 \end{itemize}
\end{theorem}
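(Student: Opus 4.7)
\medskip\noindent
\textbf{Proof proposal.} The overall strategy is to read off the theorem from the Levi Schur--Weyl duality (Theorem~\ref{res swd}) together with Lemma~\ref{lemma enhanced}, after exploiting the semidirect-product factorization $\uG\rtimes\Gm=(\GL(V)\times\Gm)\ltimes V$. The only new input is a clean description of which elements of $D(n,r)$ survive when we enlarge the Levi subgroup $\GL(V)\times\Gm$ by the unipotent translations $e^v$ to obtain the parabolic $\uG\rtimes\Gm$.

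For part (1), I will first note that from $(g,v)=(g,0)\cdot(e,g^{-1}v)$ the group $\uG$ is generated by $\GL(V)$ and $V=\{e^v\mid v\in V\}$; hence $\uG\rtimes\Gm$ is generated by $\GL(V)\times\Gm$ and $V$. Consequently the algebra $\bbc\Phi(\uG\rtimes\Gm)$ is generated by $\scrl(n,r)=\bbc\Phi(\GL(V)\times\Gm)$ together with the operators $\Phi(e^v)$, $v\in V$, so an operator $\phi\in\End_\bbc(\uvtr)$ centralizes $\bbc\Phi(\uG\rtimes\Gm)$ if and only if it centralizes both $\scrl(n,r)$ and every $\Phi(e^v)$. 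By the Levi Schur--Weyl duality (Theorem~\ref{res swd}(2)) the first condition is equivalent to $\phi\in D(n,r)$, and within $D(n,r)$ the second condition is precisely the defining relation $\Phi(e^v)\circ\phi\circ\Phi(e^{-v})=\phi$ for membership in $D(n,r)^V$. This yields (1).

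For part (2), the first inclusion $\bbc\Psi(\fsr)\subset D(n,r)^V$ follows from two observations: by Lemma~\ref{sum lem}(1) the identity $\Psi(\tau)=\sum_{l=0}^r\Psi|_l(\tau)x_e^l$ exhibits each $\Psi(\tau)$ as an element of $D(n,r)$, and the permutation operators $\Psi(\tau)$ trivially commute with any diagonal action, in particular with every $\Phi(e^v)$, so they are $V$-invariant. The second inclusion is now immediate from Lemma~\ref{lemma enhanced}: since $\calp(n,r)=\bbc\Phi(\uG\rtimes\Gm)$ by the definition in \S\ref{para sub}, part~(1) identifies $D(n,r)^V$ with $\End_{\calp(n,r)}(\uvtr)$, which Lemma~\ref{lemma enhanced} places inside $\bbc\Psi(\fsr)+D(n,r)_{\ann}$. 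I do not anticipate a serious obstacle, since all the heavy lifting has been done in Theorem~\ref{res swd}(2) and Lemma~\ref{lemma enhanced}; the only subtlety worth spelling out is that centralizing a generating subset of a group is the same as centralizing the algebra it spans, which is what allows us to reduce centralizing $\bbc\Phi(\uG\rtimes\Gm)$ to centralizing $\scrl(n,r)$ and the translations $\Phi(e^v)$ separately.
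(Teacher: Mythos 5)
Your proposal is correct and follows essentially the same route as the paper: part (1) is exactly the reduction to Theorem \ref{res swd}(2) via the factorization of $\uG\rtimes\Gm$ through $\GL(V)\times\Gm$ and the translations $\Phi(e^v)$ (which the paper states tersely as ``follows from Theorem \ref{res swd}''), and part (2) combines $\bbc\Psi(\fsr)\subset D(n,r)$ with commutation against the diagonal $\Phi(e^v)$ for the first inclusion and Lemma \ref{lemma enhanced} for the second, just as the paper does. Your write-up merely makes explicit the generating-set argument that the paper leaves implicit.
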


\begin{proof} (1) Keep in mind $\calp(n,r)=\bbc\Phi(\uG\rtimes\Gm)$, and $\uG=G\times_\nu V$. Then the statement follows from Theorem \ref{res swd}.

(2) The first inclusion follows from Lemma \ref{enhanced swty}(1). The second one is due to Lemma \ref{lemma enhanced}.
\end{proof}

\subsection{Example} Let us demonstrate the above theorem by an example. 
Keep the notations as before, and let $I=\{1,2\}$, $I_i=\{i\}$ for $i=1,2$,  and $\sigma\in\mathfrak{S}_2$ interchanging $1$ and $2$. Then we have the following demonstration of $D(n,r)^V$.
\begin{itemize}
\item[(1)] If $r=1$ or $2$, and $n\geq r$,  then $D(n,r)^V=\bbc\Psi(\fsr)$.
\item[(2)] If $r=2$ and $n=1$, then $D(n,r)^V=\bbc\Psi(\fsr)\oplus\bbc(x_e^I-x_{\sigma}^I)$.
\end{itemize}

{\sl{Proof of the demonstration.}}
Recall that $\uV$ has a basis $\eta_1,\cdots, \eta_n; \eta_{n+1}:=\eta$, where $\eta_1,\cdots, \eta_n$ forms a basis of $V$.

(1) In the case $r=1$, it is obvious that $D(n,r)_{\ann}$=0. The assertion is obvious.

Now we suppose that $n\geq r=2$. Then $D(n, r)$ has the following  basis $$\{x_e^I,x_{\sigma}^I, \Psi(\sigma)x_e^{I_1}, \Psi(\sigma)x_e^{I_2},x_e^{I_1}, x_e^{I_2}, x_e^{\emptyset}\}.$$ Take any $\phi\in D(n,r)^V$, by the discussion as in Lemma \ref{lemma enhanced}, we can assume that
\begin{equation}\label{expression of phi}
\phi=a_1x_e^I+a_2x_{\sigma}^I+a_3\Psi(\sigma)x_e^{I_1}+a_4\Psi(\sigma)x_e^{I_2}+a_5x_e^{I_1}+a_6x_e^{I_2}+a_7x_e^{\emptyset},
\end{equation}
where $a_i\in\bbc$ for $1\leq i\leq 7$. By the assumption, $\phi\circ\Phi(e^{\eta_1})=\Phi(e^{\eta_1})\circ\phi$. In particular, on one hand,
\begin{align}\label{eq1 for lem5.4}
&\phi\circ\Phi(e^{\eta_1})(\eta\otimes\eta)\cr
=&\phi((\eta_1+\eta)\otimes(\eta_1+\eta))\cr
=&(a_1+a_2)\eta_1\otimes\eta_1+(a_3+a_6)\eta\otimes\eta_1+(a_4+a_5)\eta_1\otimes\eta+a_7\eta\otimes\eta,
\end{align}
and one the other hand,
\begin{align}\label{eq2 for lem5.4}
&\Phi(e^{\eta_1})\circ\phi(\eta\otimes\eta)\cr
=&\Phi(e^{\eta_1})(a_7\eta\otimes\eta)\cr
=&a_7(\eta_1+\eta)\otimes(\eta_1+\eta)\cr
=&a_7\eta_1\otimes\eta_1+a_7\eta\otimes\eta_1+a_7\eta_1\otimes\eta+a_7\eta\otimes\eta.
\end{align}
By comparing both sides of (\ref{eq1 for lem5.4}) and (\ref{eq2 for lem5.4}), we  have
\begin{equation}\label{eqq1}
a_1+a_2=a_3+a_6=a_4+a_5=a_7.
\end{equation}
Moreover, on one hand,
\begin{align}\label{eq3 for lem5.4}
&\phi\circ\Phi(e^{\eta_1})(\eta_2\otimes\eta)\cr
=&\phi(\eta_2\otimes (\eta_1+\eta))\cr
=&a_1\eta_2\otimes\eta_1+a_2\eta_1\otimes\eta_2+a_3\eta\otimes\eta_2+a_5\eta_2\otimes\eta.
\end{align}
On the other hand, \begin{align}\label{eq4 for lem5.4}
&\Phi(e^{\eta_1})\circ\phi(\eta_2\otimes\eta)\cr
=&\Phi(e^{\eta_1})(a_3\eta\otimes\eta_2+a_5\eta_2\otimes\eta)\cr
=&a_3\eta_1\otimes\eta_2+a_3\eta\otimes\eta_2+a_5\eta_2\otimes\eta_1+a_5\eta_2\otimes\eta.
\end{align}
By comparing both sides of (\ref{eq3 for lem5.4}) and (\ref{eq4 for lem5.4}), we  have
\begin{equation}\label{eqq2}
a_1=a_5, a_2=a_3.
\end{equation}
It follows from (\ref{eqq1}) and (\ref{eqq2}) that
$$a_1=a_5=a_6:=a, a_2=a_3=a_4=b, a_7=a+b.$$
Consequently,
$$\phi=a(x_e^I+x_e^{I_1}+x_e^{I_2}+x_e^{\emptyset})+b(x_{\sigma}^I+\Psi(\sigma)x_e^{I_1}+\Psi(\sigma)x_e^{I_2}+x_e^{\emptyset})
=a\Psi(e)+b\Psi(\sigma)\in\bbc\Psi(\fsr).$$
The assertion follows.

(2) Suppose $r=2$ and $n=1$. Take any $\phi\in D(n,r)^V$. As the arguments in (1), we can write $\phi$ as the form (\ref{expression of phi}).
By the assumption, $\phi\circ\Phi(e^{\eta_1})=\Phi(e^{\eta_1})\circ\phi$. In particular, on one hand,
\begin{align}\label{eq5 for lem5.4}
&\phi\circ\Phi(e^{\eta_1})(\eta_1\otimes\eta)\cr
=&\phi(\eta_1\otimes(\eta_1+\eta))\cr
=&(a_1+a_2)\eta_1\otimes\eta_1+a_3\eta\otimes\eta_1+a_5\eta_1\otimes\eta,
\end{align}
and one the other hand,
\begin{align}\label{eq6 for lem5.4}
&\Phi(e^{\eta_1})\circ\phi(\eta_1\otimes\eta)\cr
=&\Phi(e^{\eta_1})(a_3\eta\otimes\eta_1+a_5\eta_1\otimes\eta)\cr
=&(a_3+a_5)\eta_1\otimes\eta_1+a_3\eta\otimes\eta_1+a_5\eta_1\otimes\eta.
\end{align}
By comparing both sides of (\ref{eq5 for lem5.4}) and (\ref{eq6 for lem5.4}), we  have
\begin{equation}\label{eqq3}
a_1+a_2=a_3+a_5.
\end{equation}
Then it follow from (\ref{eqq1}) and (\ref{eqq3}) that
$$a_3=a_4:=c, a_5=a_6:=d, a_7=a_1+a_2=c+d.$$
Consequently,
\begin{align*}
\phi=&d(x_e^I+x_e^{I_1}+x_e^{I_2}+x_e^{\emptyset})+c(x_{\sigma}^I+\Psi(\sigma)x_e^{I_1}+\Psi(\sigma)x_e^{I_2}+x_e^{\emptyset})+(a_1-c)x_e^I+(a_2-d)x_{\sigma}^I)\cr
=&d\Psi(e)+c\Psi(\sigma)+(a_1-c)(x_e^I-x_{\sigma}^I)\in\bbc\Psi(\fsr)\oplus\bbc(x_e^I-x_{\sigma}^I).
\end{align*}
Moreover, it is a routine to check that $x_e^I-x_{\sigma}^I\in D(n,r)^V $, and the assertion follows.

\subsection{A conjecture} We  propose the following conjecture.
\begin{conjecture} When $n\geq r$, $D(n,r)^V$ coincides with $\bbc\Psi(\fsr)$.
\end{conjecture}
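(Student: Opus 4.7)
The plan is to compute $D(n,r)^V=\End_{\calp(n,r)}(\uvtr)$ directly by combining the classical Schur--Weyl decomposition of $\uvtr$ as a $\GL_{n+1}\times\fsr$-module with an elementary invariant-theoretic analysis of the maximal parabolic subgroup $\Gm\ltimes\uG\subset\GL(\uV)=\GL_{n+1}$ whose image under $\Phi$ is $\calp(n,r)$. By Theorem~\ref{enh sch thm}(1), $D(n,r)^V=\End_{\calp(n,r)}(\uvtr)$. Classical Schur--Weyl for $(\GL_{n+1},\fsr)$ yields
$$
\uvtr \;\cong\; \bigoplus_{\mu\in\text{Par}(r,n+1)} L^\mu_{n+1}\otimes S^\mu_r
$$
as a $\GL_{n+1}\times\fsr$-module; since $\calp(n,r)$ acts only on the $L^\mu_{n+1}$-factors,
$$
\End_{\calp(n,r)}(\uvtr)\;=\;\bigoplus_{\mu,\mu'}\Hom_{\calp(n,r)}\bigl(L^\mu_{n+1},L^{\mu'}_{n+1}\bigr)\otimes\Hom_\bbc\bigl(S^\mu_r,S^{\mu'}_r\bigr).
$$

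The crucial step is to show $\Hom_{\calp(n,r)}(L^\mu_{n+1},L^{\mu'}_{n+1}) = \delta_{\mu,\mu'}\bbc$. Rewriting this $\Hom$-space as $\bigl((L^\mu_{n+1})^*\otimes L^{\mu'}_{n+1}\bigr)^{\calp(n,r)}$ and decomposing into $\GL_{n+1}$-irreducibles reduces the question to showing $(L^\nu_{n+1})^{\calp(n,r)}=\delta_{\nu,0}\,\bbc$ for every irreducible $L^\nu_{n+1}$. The key observation is that the matrix description from~\S\ref{para sub} shows $\Gm\ltimes\uG$ consists of all block-triangular matrices $\bigl(\begin{smallmatrix}A & v\\ 0 & c\end{smallmatrix}\bigr)$, and in particular contains the upper-triangular Borel $B^+$ of $\GL_{n+1}$. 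Hence $(L^\nu_{n+1})^{\calp(n,r)}\subset (L^\nu_{n+1})^{B^+}$, and a $B^+$-fixed vector is simultaneously $U^+$-fixed (hence a highest-weight vector of weight $\nu$) and $T$-fixed (hence of $T$-weight zero), which forces $\nu=0$. Therefore $\Hom_{\calp(n,r)}(L^\mu_{n+1},L^{\mu'}_{n+1})$ equals the multiplicity of the trivial representation in $(L^\mu_{n+1})^*\otimes L^{\mu'}_{n+1}$, namely $\delta_{\mu,\mu'}$.

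Assembling the pieces gives $\End_{\calp(n,r)}(\uvtr) = \bigoplus_{\mu\in\text{Par}(r,n+1)}\End_\bbc(S^\mu_r)$, which coincides with the image $\bbc\Psi(\fsr)$ of $\fsr$ on $\uvtr$ by Schur--Weyl on the symmetric-group side. This establishes the conjecture. There is no substantial obstacle in this approach; the main point is the Lie--Kolchin-type fact $(L^\nu_{n+1})^{B^+}=0$ for $\nu\neq 0$, which is classical. It is also worth noting that the argument never uses the hypothesis $n\geq r$, so the conclusion should in fact hold for every $n\geq 1$; for instance the apparent counterexample suggested by the case $r=2,\,n=1$ disappears once one observes that $x_e^I-x_\sigma^I$ vanishes as an operator on $\uvtr$ there, since $V^{\otimes 2}$ is one-dimensional.
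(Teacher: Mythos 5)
The paper records this statement only as a conjecture and supplies no proof, so your argument has to stand on its own --- and it does. After the (legitimate) identification $D(n,r)^V=\End_{\calp(n,r)}(\uvtr)$ from Theorem \ref{enh sch thm}(1), your route is pure classical invariant theory: since $\calp(n,r)\subset S(n+1,r)=\End_{\fsr}(\uvtr)$ acts only through the $L^\mu_{n+1}$-factors of the Schur--Weyl decomposition, the commutant is $\bigoplus_{\mu,\mu'}\Hom_{P}(L^\mu_{n+1},L^{\mu'}_{n+1})\otimes\Hom_\bbc(S^\mu_r,S^{\mu'}_r)$ for the parabolic $P=\uG\rtimes\Gm$; and because $P$ contains the upper-triangular Borel $B^+$ of $\GL_{n+1}$, complete reducibility of $(L^\mu_{n+1})^*\otimes L^{\mu'}_{n+1}$ together with $(L^\nu_{n+1})^{B^+}=\delta_{\nu,0}\,\bbc$ forces $\Hom_P(L^\mu_{n+1},L^{\mu'}_{n+1})=\delta_{\mu,\mu'}\bbc$, hence $\End_{\calp(n,r)}(\uvtr)=\End_{\GL_{n+1}}(\uvtr)=\bbc\Psi(\fsr)$. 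Each step checks out; the only ingredient worth writing out explicitly is the standard characteristic-zero fact that $\dim (L^\nu_{n+1})^{U^+}=1$ (the $U^+$-fixed space is the highest-weight line), which combined with $T$-fixedness gives $\nu=0$. Your closing observation is also correct and worth emphasizing: the argument never uses $n\geq r$, and I verified directly that for $n=1$, $r=2$ the commutant of the Borel of $\GL_2$ on $(\bbc^2)^{\otimes 2}$ is two-dimensional, equal to $\bbc\Psi(\mathfrak{S}_2)$; the element $x_e^I-x_\sigma^I$ exhibited in the paper's Example in \S 5.2 is the zero operator there (as $\Psi^V_2(\sigma)$ acts trivially on the one-dimensional space $V^{\otimes 2}$, consistent with Remark \ref{remark: 3.6} requiring $n>l$ for linear independence), so that Example's extra summand is $0$ and does not contradict your stronger conclusion. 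In short: this is a correct proof of the conjecture, and in fact of a sharper statement with the hypothesis $n\geq r$ removed.
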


\subsection{Enhanced tensor invariants}
   Identify $\duvtr$ with $(\uvtr)^*$. Then  there is a natural $\uG\rtimes\Gm$-equivariant isomorphism of vector spaces
$$T:\uvtr\otimes {\duvtr}\rightarrow \End_\bbc(\uvtr).$$
With the parabolic Schur-Weyl duality, we describe the enhanced tensor invariants.

 Still let $\eta_1,\ldots,\eta_n$ be a basis of $V$, and let $\eta_{n+1}=\eta$. Then $\{\eta_i\mid i=1,\ldots, n+1\}$ constitute a basis of $\uV$. Let $\eta_1^*,\ldots,\eta_{n+1}^*$ be the dual basis of $\uV^*$. For a multi-index $\bi=(i_1,\ldots,i_r)\in \caln^r$. Still set $\eta_\bi=\eta_{i_1}\otimes\cdots\otimes \eta_{i_r}$, and $\eta^*_{\bi}=\eta^*_{i_1}\otimes\cdots\otimes \eta^*_{i_r}$. Then $\eta_\bi$, and $\eta^*_\bi$ form basis of $\uvtr$ and $\duvtr$ respectively, when $\bi$ ranges over $\caln^r$. Furthermore, the set $\{\eta_\bi\otimes \eta^*_\bj\mid (\bi,\bj)\in \caln^r\times\caln^r\}$ is a basis of $\uvtr\otimes \duvtr$.

For $\sigma\in \fsr$, define a mixed tensor $C_\sigma$ by $$C_\sigma=\sum_{\bi\in\caln^r} \eta_{\sigma.\bi}\otimes \eta^*_\bi.$$

\begin{proposition} Let $\uG=\underline{\GL(V)}$. When $D(n,r)^V=\bbc\Psi(\fsr)$, then the space of $\uG\rtimes\Gm$-invariants in the mixed-tensor product $\uvtr\otimes \duvtr$ is generated by  these $C_\sigma$ with $\sigma\in \fsr$.
\end{proposition}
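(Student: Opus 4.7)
The plan is to reduce the computation of the invariant space to the parabolic Schur--Weyl duality via the standard identification $T$, and then to identify the image of each $C_\sigma$ under $T$ with the corresponding permutation operator $\Psi(\sigma)$. The key point is that $T$ is $\GL(\uV)$-equivariant (where $\GL(\uV)$ acts on $\duvtr$ contragrediently and on $\End_\bbc(\uvtr)$ by conjugation $\phi \mapsto \Phi(g)\phi\Phi(g)^{-1}$), hence $\uG\rtimes\Gm$-equivariant when we restrict to the closed subgroup $\uG\rtimes\Gm \subset \GL(\uV)$.

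First I would take invariants on both sides of $T$: this yields an isomorphism
\begin{equation*}
T\colon (\uvtr\otimes \duvtr)^{\uG\rtimes\Gm} \xrightarrow{\;\sim\;} \End_{\bbc\Phi(\uG\rtimes\Gm)}(\uvtr).
\end{equation*}
By Theorem \ref{enh sch thm}(1), the right-hand side equals $D(n,r)^V$, and by the hypothesis $D(n,r)^V = \bbc\Psi(\fsr)$. Thus the invariant space has dimension $r!$ and is carried by $T$ onto $\bbc\Psi(\fsr)$.

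Next I would compute $T(C_\sigma)$ directly. For a basis element $\eta_\bk \in \uvtr$,
\begin{equation*}
T(C_\sigma)(\eta_\bk) \;=\; \sum_{\bi\in\caln^r}\eta^*_\bi(\eta_\bk)\,\eta_{\sigma.\bi} \;=\; \eta_{\sigma.\bk} \;=\; \Psi(\sigma)(\eta_\bk),
\end{equation*}
using the definition of the permutation action in (\ref{sym action on tensor}). Hence $T(C_\sigma)=\Psi(\sigma)$ for every $\sigma\in\fsr$. Since $\{\Psi(\sigma)\mid \sigma\in\fsr\}$ spans $\bbc\Psi(\fsr)$, the set $\{C_\sigma\mid \sigma\in\fsr\}$ spans $(\uvtr\otimes\duvtr)^{\uG\rtimes\Gm}$ by linearity and the isomorphism $T$, which is the desired conclusion.

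There is essentially no obstacle here beyond the conditional hypothesis $D(n,r)^V=\bbc\Psi(\fsr)$: the real content lies in Theorem \ref{enh sch thm} and the conjectural equality above, while the proposition itself is a clean translation via $T$. The only bookkeeping point deserving care is to verify that $T$ intertwines the conjugation action on $\End_\bbc(\uvtr)$ with the diagonal action on $\uvtr\otimes\duvtr$ for the enhanced group, but this follows at once from the fact that $\uG\rtimes\Gm$ is a closed subgroup of $\GL(\uV)$ together with the classical $\GL(\uV)$-equivariance of $T$.
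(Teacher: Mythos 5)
Your proof is correct and follows essentially the same route as the paper's: compute $T(C_\sigma)=\Psi(\sigma)$, then use the equivariance of $T$ together with Theorem \ref{enh sch thm}(1) and the hypothesis $D(n,r)^V=\bbc\Psi(\fsr)$ to identify the invariant space with $\bbc\Psi(\fsr)$. Your version merely spells out the intertwining of the conjugation action more explicitly than the paper does.
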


\begin{proof} By the definition of $T$ mentioned before, for $\sigma\in\fsr$ we have $$T(C_\sigma)\eta_\bj=
\sum_{\bi\in\caln^r}\eta^*_\bi(\eta_\bj)\eta_{\sigma.\bi}=\eta_{\sigma.\bj}.$$
This means that $T(C_\sigma)=\Psi(\sigma)$. By the assumption,
the $\uG\rtimes\Gm$-invariants in $\uvtr\otimes \duvtr$ is isomorphic to $\sum_{\sigma\in\fsr}\bbc\Psi(\sigma)$ via $T$. Hence, the proposition is proved.
\end{proof}

\section{Representations of the parabolic Schur algebra}
Keep the notations as before. In this section, we proceed to study representations of the parabolic Schur algebra $\calp(n,r)$. In the whole section, the term ``a module of $\calp(n,r)$"  always means a  right module of $\calp(n,r)$.

\subsection{Irreducible modules and PIMs} Recall $\calp(n,r)$ has a sequence of right ideals of $\calp(n,r)$:
$$\calp(n,r)=\calp(n,r)_0\supset \calp(n,r)_1\supset\calp(n,r)_2\supset\cdots\supset\calp(n,r)_r\supset\calp(n,r)_{r+1}.$$
 with quotients $\calp(n,r)_i\slash \calp(n,r)_{i+1}\cong S(n,i)\ltimes\mathfrak{a}_i$ (Lemma \ref{enh sch stru}(3)).
 Those right ideals satisfy $\calp(n,r)_i\calp(n,r)=\calp(n,r)_i$. Naturally, $S(n,r)$ naturally becomes a $\calp(n,r)$-module.

 Consider the primitive idempotent decomposition of the identity element $\id$ in $\calp(n,r)$
\begin{align}\label{ide decomp}
\id=\sum_{\bs\in\bbn^n_r}\xi_{\tpis,\tpis}.
\end{align}
We have
\begin{align}\label{reali of IPM}
\calp(n,r)=\sum_{\bs\in\bbn^n_r}\xi_{\tpis,\tpis}\calp(n,r).
\end{align}
Then  $P_{\tpis}:=\xi_{\tpis,\tpis}\calp(n,r)$ is an indecomposable projective module (PIM\footnote{short for Principal Indecomposable Modules.} for short) of $\calp(n,r)$. Furthermore, for $0\leq l\leq r$ we set
$$\calp(n,r)_{[l]}:=\bbc\mbox{-span}\{\xi_{\tilde\pi_\bs,\bj}\in E\mid |\bs|=l\}.$$
Then by (S1)-(S3) again,
\begin{align}\label{reali of the lth}
\calp(n,r)_{[l]}=\sum_{\overset{\bs\in\bbn^n_r}{|\bs|=l}}P_{\tpis},
\end{align}


\subsubsection{}\label{subsec gen irr} Let us diverge by recalling some general results on irreducible modules for the classical Schur algebras. First of all, the Schur algebra $S(m,r)$ over $\bbc$ is semisimple.
For any $\pi\in\um^r$, $\pi$ lies in some $\fsr$-orbit from $\um^r$, say $\lambda=(\lambda_1,\ldots,\lambda_m)\in \Lambda(m,r)$. The vector $\lambda$  can also be regarded as a unordered partition of $r$ into $m$ parts, where zero parts being allowed. So the identity element $\id_{S(m,r)}$ of $S(m,r)$ has the following primitive idempotent decomposition $\id_{S(m,r)}=\sum_{\lambda\in\Lambda(m,r)}\xi_{\lambda,\lambda}$.
 Consequently, the right ideal generated by $\xi_{\lambda,\lambda}$ forms an irreducible module over $S(m,r)$, denoted by $\tilde D_\lambda$.

Consider the Weyl group $\cw$ of $\GL_m$ which is exactly $\fsm$, and the action of $\cw$ on $\um^r$. The action is on the left, $w\pi=(w(\pi_1),\ldots,w(\pi_r))$ for $w\in\cw$ and $\pi=(\pi_1,\ldots,\pi_r)\in\um^r$.
This action commutates with that of $\fsr$. Consequently, it is well-defined that $\fsm$ acts on $\Lambda(m,r)$ as $w\lambda=(\lambda_{w^{-1}(1)},\ldots,\lambda_{w^{-1}(m)})$ for $w\in\cw$ and
$\lambda\in \Lambda(m,r)$. Each $\cw$-orbit of $\Lambda(m,r)$ contains only one dominant weight $\gamma=(\gamma_1,\ldots,\gamma_m)$ which satisfies $\gamma_1\geq \gamma_2\geq\cdots \geq\gamma_m$. Denote by $\Lambda^+(m,r)$ the set of all dominant weights. Then $\Lambda^+(m,r)$ exactly consists of all partition of $r$ admitting at most number  $m$ of  nonzero parts,  coinciding with $\text{Par}(r,m)$.

\begin{lemma} Suppose  $\nu$ is a dominant weight, and shares the same $\fsm$-orbit with $\lambda$. Then two $S(m,r)$-irreducible modules $\tilde D_\lambda$ and $\tilde D_\nu$ are isomorphic.
\end{lemma}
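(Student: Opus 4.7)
The plan is to construct the desired isomorphism via conjugation by a permutation matrix. The key observation is that the Weyl group $\cw = \fsm$ embeds into $\GL_m$ as the subgroup of permutation matrices, and for $w\in\cw$ the image $\Phi(\dot w)\in S(m,r)$ (where $\Phi:\GL_m\to S(m,r)\hookrightarrow \End_\bbc(E^{\otimes r})$ for $E=\bbc^m$) is an invertible element of $S(m,r)$, since $\dot w$ is invertible in $\GL_m$ and the Schur algebra is a quotient-realization of the group algebra.

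Since $\fsm$ is generated by simple reflections, it suffices to treat the case $\nu = w\lambda$ for a single $w\in\fsm$. The central claim I would establish is
\[
\Phi(\dot w)\,\xi_{\lambda,\lambda}\,\Phi(\dot w)^{-1}=\xi_{w\lambda,\,w\lambda}.
\]
There are two natural ways to see this. Using the realization on tensor space recalled in Lemma \ref{general GL lem} and the classical Schur algebra picture, the idempotent $\xi_{\lambda,\lambda}$ acts as the projector onto the $\lambda$-weight space $(E^{\otimes r})_\lambda=\bbc\text{-span}\{v_\bi\mid \bi\sim(1^{\lambda_1}2^{\lambda_2}\cdots m^{\lambda_m})\}$; the operator $\Phi(\dot w)$ permutes the basis $\{v_1,\ldots,v_m\}$ of $E$ according to $w$, and hence carries $(E^{\otimes r})_\lambda$ bijectively onto $(E^{\otimes r})_{w\lambda}$, which is exactly the description of $\xi_{w\lambda,w\lambda}$. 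Alternatively, one can compute directly using (S1)--(S3) together with the expansion of $\Phi(\dot w)$ provided by (\ref{class s g}), where the only nonzero contributions come from pairs $(\bp,\bq)$ with $\bq = w^{-1}.\bp$.

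Once the conjugation identity is in hand, the isomorphism is immediate: the map
\[
\varphi: \tilde D_\lambda = \xi_{\lambda,\lambda}\,S(m,r)\longrightarrow \xi_{w\lambda,\,w\lambda}\,S(m,r)=\tilde D_{w\lambda},\qquad \xi_{\lambda,\lambda}\,a\longmapsto \Phi(\dot w)\,\xi_{\lambda,\lambda}\,a = \xi_{w\lambda,w\lambda}\,\Phi(\dot w)\,a,
\]
is right $S(m,r)$-linear by construction, and it is bijective with inverse given by left multiplication by $\Phi(\dot w)^{-1}$. Iterating over a reduced expression of any element of $\fsm$ relating $\lambda$ to $\nu$ yields $\tilde D_\lambda\cong \tilde D_\nu$ as desired.

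I do not expect a serious obstacle here; the only delicate point is verifying the conjugation formula $\Phi(\dot w)\,\xi_{\lambda,\lambda}\,\Phi(\dot w)^{-1}=\xi_{w\lambda,w\lambda}$ cleanly, which is why I would prefer the tensor-space (weight-space) argument over a brute-force use of (S1)--(S3). Once that identity is established, the rest of the proof is formal module-theoretic manipulation using the invertibility of $\Phi(\dot w)$.
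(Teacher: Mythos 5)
Your proof is correct and follows essentially the same route as the paper: the paper also uses the permutation of the basis by $w$ to get an automorphism of $S(m,r)$ interchanging $\tilde D_\lambda$ and $\tilde D_{w\lambda}$. You merely make explicit what the paper leaves implicit, namely that this automorphism is inner (conjugation by the invertible element $\Phi(\dot w)\in S(m,r)$), with the conjugation identity $\Phi(\dot w)\,\xi_{\lambda,\lambda}\,\Phi(\dot w)^{-1}=\xi_{w\lambda,w\lambda}$ and left multiplication by $\Phi(\dot w)$ furnishing the module isomorphism.
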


\begin{proof} Recall that  $S(m,r)=\Phi(\GL(m,\bbc))$ with $\GL(m,\bbc)=\GL(W)$ associated to a given basis $\eta_i,i=1,\ldots,m$ of $W\cong \bbc^m$.
Suppose $\gamma=w(\lambda)$ for $w\in\cw$. Note that $w$ gives rise to an automorphism of $\GL(W)$ by permutation via $w$ on the basis $\eta_i, i=1,\ldots,m$. Naturally, $w$ gives rise to an automorphism of $S(m,r)$ which interchanges  two $S(m,r)$-irreducible modules $\tilde D_\lambda$ and $\tilde D_\gamma$. Hence both of them are isomorphic, as $S(m,r)$-modules.
\end{proof}

Thus the isomorphism classes of irreducible  $S(m,r)$-modules  are parameterized by $\Lambda^+(m,r)$. For $\lambda\in \Lambda(m,r)$, the representative of the corresponding isomorphism classes of irreducibles is denoted by $\tilde{D}_\gamma$ where $\gamma=\cw(\lambda)\cap \Lambda^+(m,r)$. Denote by $\ell(\gamma)$  the number of different elements in the $\cw$-orbit of $\gamma$. Then there is a direct sum decomposition of irreducible modules for $S(m,r)$:
\begin{align}\label{Schur decomp}
S(m,r)\cong \sum_{\gamma\in \Lambda^+(m,r)}\tilde{D}^{\oplus \ell(\gamma)}_\gamma.
\end{align}

\subsubsection{}\label{Structure of Simples and PIMs} Let us turn back to the parabolic Schur algebra $\calp(n,r)$.
For a given $\bs\in\bbn^n_r$ with $|\bs|=l\leq r$, $\tpis=(1^{s_1}\ldots n^{s_n}(n+1)^{r-l})$.
 By (S1)-(S3), $P_{\tpis}=\xi_{\tpis,\tpis}\calp(n,r)_l$ and $\xi_{\tpis,\tpis}\calp(n,r)_{l+1}=0$.
 So $P_{\tpis}$ can be decomposed into a direct sum of two subspaces
 \begin{align}\label{subspace P'}
 P'_{\tpis}=\bbc\mbox{-span}\{\xi_{{\tpis},\bp_l(n+1)^{r-l}}\mid \bp_l\in \un^l \}
 \end{align}
 and
 \begin{align}\label{subm maxQ}
 Q_{\tpis}=\bbc\mbox{-span}\{\xi_{{\tpis},\bq_l(n+1)^{r-l}}\mid \bq_l\in \caln^l, \text{rk}_{\un}(\bq_l)<l\}.
 \end{align}
   Furthermore $Q_{\tpis}=P_{\tpis}\calp(n,r)_{l+1}$, and it becomes a $\calp(n,r)$-submodule of $P_{\tpis}$. Moreover, $Q_{\tpis}$ is the unique maximal submodule of   $P_{\tpis}$, and the quotient $P_{\tpis}\slash Q_{\tpis}\cong \tilde D_{\tpis}$ as $S(n,l)$-modules. By the arguments above, the irreducible $S(n,l)$-module $\tilde D_{\tpis}$ is naturally an $\calp(n,r)$-module. Still denote by $\cw$ the Weyl group of $\GL_n$.  By the same arguments as in \S\ref{subsec gen irr}, $w\in \cw$ gives rise to an automorphism of $\calp(n,r)$. As $\calp(n,r)$-modules, $P_{\tpis}\cong P_\gamma$, and $\tilde D_{\tpis}\cong D_{\gamma}$ for  some $\gamma\in \Lambda^+(n,l)$. Moreover,
 $$\bigoplus\limits_{\stackrel{l=0}{l\neq |\bs|}}^r\calp(n,r)_{[l]}\subseteq\Ann_{\calp(n,r)}(D_\gamma)\subsetneqq \calp(n,r).$$
This implies that $D_{\gamma}\not\cong D_{\gamma^{\prime}}$ and $P_{\gamma}\not\cong P_{\gamma^{\prime}}$ if $\gamma\in\Lambda^+(n,l)$ and $\gamma^{\prime}\in\Lambda^+(n,l^{\prime})$ with $l\neq l^{\prime}$. Furthermore, by the classical theory of Schur algebras, $D_{\upsilon}\not\cong D_{\upsilon^{\prime}}$ and $P_{\upsilon}\not\cong P_{\upsilon^{\prime}}$ if $\upsilon, \upsilon^{\prime}\in\Lambda^+(n,l)$ and $\upsilon\neq\upsilon^{\prime}$.
  Summing up, we have the following classification results of irreducible   $\calp(n,r)$-modules and  of indecomposable projective $\calp(n,r)$-modules.

\begin{theorem}\label{thm:irr IPM csf} The set $\Lambda:=\{\gamma\in \Lambda^+(n,l)\mid l=0,\cdots, r\}$ parameterize both of the isomorphism classes of irreducible $\calp(n,r)$-modules and  of indecomposable projective $\calp(n,r)$-modules. The corresponding irreducible ({\sl{resp.}} indecomposable projective) modules are $D_{\gamma}$ ({\sl{resp.}} $P_{\gamma}$) with $\gamma\in \Lambda$.
\end{theorem}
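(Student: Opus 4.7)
The plan is to build on the explicit constructions already assembled in Section 6 and formalize them into a clean classification. First I would confirm that each $\xi_{\tpis,\tpis}$ in the decomposition \eqref{ide decomp} is a primitive idempotent, so that $P_{\tpis} = \xi_{\tpis,\tpis}\calp(n,r)$ is a PIM. Primitivity reduces to showing that the local ring criterion holds for $\xi_{\tpis,\tpis}\calp(n,r)\xi_{\tpis,\tpis}$: modulo $\calp(n,r)_{|\bs|+1}$ this algebra sits inside the corresponding quotient $S(n,|\bs|)\ltimes\mathfrak{a}_{|\bs|}$ from Theorem~\ref{enh sch stru}(3), where the image of $\xi_{\tpis,\tpis}$ is a primitive idempotent of the semisimple factor $S(n,|\bs|)$ (it is the rank-one projection onto the weight space $\tpis$). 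Since the kernel of this reduction is contained in a nilpotent ideal coming from the filtration, the algebra is local, hence $\xi_{\tpis,\tpis}$ is primitive.

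Next I would identify the head of $P_{\tpis}$. Using the decomposition $P_{\tpis}=P'_{\tpis}\oplus Q_{\tpis}$ from \eqref{subspace P'}–\eqref{subm maxQ}, I would check that $Q_{\tpis}=P_{\tpis}\cdot\calp(n,r)_{|\bs|+1}$ is a submodule and that the natural map $P_{\tpis}/Q_{\tpis}\twoheadrightarrow\tilde D_{\tpis}$ induced by reduction modulo $\calp(n,r)_{|\bs|+1}$ is an isomorphism of $S(n,|\bs|)$-modules. Because $\calp(n,r)_{|\bs|+1}$ is the radical of $\calp(n,r)_{|\bs|}$ modulo lower pieces and $S(n,|\bs|)$ is semisimple with $\tilde D_{\tpis}$ irreducible, $Q_{\tpis}$ is the unique maximal submodule. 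This forces $D_{\tpis}:=P_{\tpis}/Q_{\tpis}$ to be irreducible and $P_{\tpis}$ to be its projective cover.

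The remaining tasks are the Weyl-group reduction, distinctness, and exhaustion. As outlined in \S\ref{Structure of Simples and PIMs}, permuting the basis $\eta_1,\dots,\eta_n$ via $w\in\cw=\mathfrak{S}_n$ induces an automorphism of $\calp(n,r)$ (it stabilises $\Gm\ltimes\uG$), so $P_{\tpis}\cong P_{\tilde\pi_{w\bs}}$ and $D_{\tpis}\cong D_{\tilde\pi_{w\bs}}$, reducing the parameter set from $\bbn^n_r$ to $\Lambda=\bigsqcup_{l=0}^r\Lambda^+(n,l)$. For distinctness I would compute annihilators: $\calp(n,r)_{[k]}$ annihilates $D_\gamma$ for every $k\neq|\gamma|$ (by (S2) and the definition of $\calp(n,r)_{[l]}$), so $D_\gamma\not\cong D_{\gamma'}$ whenever $|\gamma|\neq|\gamma'|$; within a fixed level the non-isomorphism is inherited from the classical Schur-algebra case since $D_\gamma|_{S(n,l)}=\tilde D_\gamma$. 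Completeness then follows because \eqref{reali of IPM} gives $\calp(n,r)=\bigoplus_{\bs}P_{\tpis}$ as a right module, so every simple quotient of the regular module—hence every irreducible $\calp(n,r)$-module—is isomorphic to some $D_\gamma$; and every PIM appears as a summand of $\calp(n,r)$ and is, by general theory, the projective cover of its head, hence isomorphic to the corresponding $P_\gamma$.

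The main obstacle I expect is the careful verification that $\xi_{\tpis,\tpis}$ is primitive and that $Q_{\tpis}$ is the unique maximal submodule; both hinge on controlling the interaction between the filtration $\{\calp(n,r)_i\}$ and the multiplication rules (S1)–(S3) on the spanning set $\{\xi_{\tilde\pi_\bs,\bj}\}$ of $E$. Everything else is a bookkeeping translation of the classical semisimple Schur-algebra classification through the filtration.
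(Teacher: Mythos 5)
Your proposal follows the same route as the paper's own argument (the discussion in \S\ref{Structure of Simples and PIMs} preceding the theorem): the idempotent decomposition (\ref{ide decomp}), the modules $P_{\tpis}=\xi_{\tpis,\tpis}\calp(n,r)$ with $Q_{\tpis}$ as unique maximal submodule and head $\tilde D_{\tpis}$, the Weyl-group twist reducing parameters to dominant weights, the annihilator argument separating levels, and exhaustion via the regular module. The one place where you go beyond the paper --- actually justifying that $\xi_{\tpis,\tpis}$ is primitive --- is exactly where the argument fails. The image of $\xi_{\tpis,\tpis}$ in the quotient $S(n,l)$, $l=|\bs|$, is the projection of $(\bbc^n)^{\otimes l}$ onto its \emph{entire} $\bs$-weight space, i.e.\ onto the permutation module of $\fsl$ on the cosets of $\mathfrak{S}_{s_1}\times\cdots\times\mathfrak{S}_{s_n}$, which has dimension $l!/(s_1!\cdots s_n!)$; it is not a rank-one projection and is not a primitive idempotent of the semisimple algebra $S(n,l)$ in general. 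Concretely, since $\xi_{\tpis,\tpis}\calp(n,r)_{l+1}=0$, the reduction modulo $\calp(n,r)_{l+1}$ is injective on $\xi_{\tpis,\tpis}\calp(n,r)\xi_{\tpis,\tpis}$, and this algebra is isomorphic to $\End_{\fsl}$ of that permutation module --- a product of matrix algebras, local only when the permutation module is irreducible. Already for $n=r=2$ and $\bs=(1,1)$, so $\tpis=(1,2)$, it is spanned by $\xi_{(12),(12)}$ and $\xi_{(12),(21)}$ and is isomorphic to $\bbc\times\bbc$; hence $\xi_{\tpis,\tpis}$ is not primitive and $P_{\tpis}$ is decomposable. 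For the same reason the head you use, $\tilde D_{\tpis}=\xi_{\tpis,\tpis}S(n,l)$, is not a simple $S(n,l)$-module in general: for $S(2,2)$ and $\bs=(1,1)$ it is $4$-dimensional and splits as the $3$-dimensional plus the $1$-dimensional simple.

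To be fair, this gap is inherited rather than introduced: the paper itself asserts in \S\ref{subsec gen irr} without proof that $\id_{S(m,r)}=\sum_{\lambda}\xi_{\lambda,\lambda}$ is a primitive idempotent decomposition and that $\xi_{\lambda,\lambda}S(m,r)$ is irreducible, and these assertions fail for the classical Schur algebra as just noted; your write-up reproduces, rather than repairs, the weakest step of the paper's own proof, and your supplied justification (``rank-one projection'', local ring criterion via the filtration) does not close it. A genuine repair would first refine each $\xi_{\tpis,\tpis}$ into primitive idempotents of the semisimple layer $S(n,l)$ (via its Artin--Wedderburn decomposition, equivalently Young-symmetrizer-type idempotents acting on the weight space), and only then run your filtration argument (vanishing of $\xi\calp(n,r)_{l+1}$, unique maximal submodule, level separation by annihilators, Weyl-group twist); the distinctness and exhaustion steps of your proposal would survive, but the identification of the PIMs and their heads --- and hence the parametrization by $\Lambda^+$ --- has to be re-derived for the refined idempotents rather than read off from $P_{\tpis}$ as defined.
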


\subsection{Cartan invariants and block degeneracy for $\calp(n,r)$} By the above arguments, the $P_\gamma$'s with $\gamma\in \bigcup_{l=0}^r \Lambda^+(n,l)$ form all PIM's for  $\calp(n,r)$, up to isomorphisms. In this subsection, we will precisely determine the Cartan invariants  for the parabolic Schur algebra $\calp(n,r)$. For that, we need the following preliminary result which describes the decomposition factors appearing in $Q_{\tpis}$ defined in (\ref{subm maxQ}) and their multiplicity.

\begin{lemma}\label{Structure of Q}
Keep the notations as before. Then for any given $\bs\in\bbn^n_r$ with $1\leq |\bs|=l\leq r$, the following equality holds in the Grothendieck group for the $\calp(n,r)$-module category.
\begin{align}\label{eq: Q decomp in Groth gp}
[Q_{\tpis}]
=\sum\limits_{i=0}^{l-1}\sum\limits_{\gamma\in\Lambda^+(n,i)}\ell(\gamma)[D_{\gamma}]
=\sum\limits_{i=0}^{l-1}[S(n,i)],
\end{align}
where $\ell(\gamma)$ is defined as before,  the number of different elements in the $\cw$-orbit of $\gamma$.
\end{lemma}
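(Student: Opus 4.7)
The plan is to construct a filtration of $Q_{\tpis}$ by $\calp(n,r)$-submodules whose successive subquotients realize the regular representations of the classical Schur algebras $S(n,k)$ for $k = 0, 1, \ldots, l-1$. For $-1 \leq k \leq l-1$ I would set
\[ F_k = \bbc\text{-span}\{\xi_{\tpis,\bj} : (\tpis,\bj) \in E,\ \text{rk}_{\un}(\bj) \leq k\}, \]
with $F_{-1} = 0$ and $F_{l-1} = Q_{\tpis}$. That each $F_k$ is a right $\calp(n,r)$-submodule will follow directly from the multiplication rules (S1)--(S3) of the Schur algebra $S(n+1,r)$: for a basis element $\xi_{\tpis,\bq} \in F_k$ with $\text{rk}_{\un}(\bq) = m \leq k$ and any $\xi_{\tilde\pi_\bt,\bj}$ from the basis of $\calp(n,r)$, the product is nonzero only when $\bq \sim \tilde\pi_\bt$, forcing $|\bt| = m$; the product is then a combination of $\xi_{\tpis,\bj'}$ with $\bj' \sim \bj$, and since $(\tilde\pi_\bt, \bj) \in E$ already satisfies $\text{rk}_{\un}(\bj) \leq |\bt| = m \leq k$, the filtration is preserved.

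The heart of the argument is to identify the successive subquotients. I would show that $F_k/F_{k-1} \cong S(n,k)$ as right $\calp(n,r)$-modules, where the $\calp(n,r)$-action on $S(n,k)$ factors through the surjection $\calp(n,r) \twoheadrightarrow \calp(n,r)_k/\calp(n,r)_{k+1} \twoheadrightarrow \mathfrak{s}_k \cong S(n,k)$ furnished by Theorem \ref{enh sch stru}(3). The key observation is that multiplication of a class $\xi_{\tpis,\bq}$ with $\text{rk}_{\un}(\bq) = k$ by $\xi_{\tilde\pi_\bt,\bj}$ contributes nontrivially modulo $F_{k-1}$ only when $|\bt| = k$ \emph{and} $\text{rk}_{\un}(\bj) = k$, i.e., only for $(\tilde\pi_\bt,\bj)$ in the basis of the Levi-Schur piece $\mathfrak{s}_k$; consequently the nilpotent ideal $\mathfrak{a}_k$ and the layers $\calp(n,r)_i$ for $i \neq k$ all annihilate the subquotient. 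The required isomorphism is the ``extraction'' map $\xi_{\tpis,\bj} \longmapsto \xi_{\bp,\bq}$, where $\bp, \bq \in \un^k$ are read off at the $k$ positions of $\bj_l$ carrying $\un$-entries (from $\tpis_l$ and $\bj_l$ respectively); well-definedness modulo the stabilizer of $\tpis$ in $\fsr$, and equivariance under the right $\calp(n,r)$-action via the quotient to $S(n,k)$, are both checked by direct application of (S1)--(S3).

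Summing in the Grothendieck group then gives
\[ [Q_{\tpis}] = \sum_{k=0}^{l-1}[F_k/F_{k-1}] = \sum_{k=0}^{l-1}[S(n,k)], \]
and the semisimplicity of the classical Schur algebras $S(n,k)$ over $\bbc$ together with the decomposition (\ref{Schur decomp}) yields $[S(n,k)] = \sum_{\gamma \in \Lambda^+(n,k)}\ell(\gamma)[\tilde D_\gamma]$; identifying $\tilde D_\gamma$ with $D_\gamma$ as $\calp(n,r)$-modules as in \S\ref{Structure of Simples and PIMs} produces the stated formula. The principal obstacle lies in the middle step, namely establishing that $F_k/F_{k-1}$ carries the \emph{regular} $S(n,k)$-module structure rather than merely having the same composition multiplicities; this requires careful bookkeeping of the $\fsr$-equivalence on pairs $(\tpis,\bj) \in E$ and verifying that the extraction map intertwines the two right $\calp(n,r)$-actions, which is done by combining the Levi action described in Lemma \ref{gen G} with the nilpotent generators appearing in Theorem \ref{enh sch stru}(4) and checking case-by-case that the latter act trivially on each subquotient.
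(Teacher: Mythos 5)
Your filtration $F_k$ is exactly the paper's filtration (their $Q_{\tpis}^{(i)}$ with $i=l-k$), and your overall strategy is the same as theirs. The problem is the step you yourself single out as the crux: the identification $F_k/F_{k-1}\cong S(n,k)$ as the \emph{regular} module is not merely delicate to verify with (S1)--(S3); it is false in general. A basis of $F_k/F_{k-1}$ is indexed by orbits of the stabilizer of $\tpis$ in $\fsr$ (the Young subgroup $\mathfrak{S}_{s_1}\times\cdots\times\mathfrak{S}_{s_n}\times\mathfrak{S}_{r-l}$) acting on second indices of $\un$-rank $k$, so its dimension is $\sum\prod_{i=1}^n\binom{n+k_i-1}{k_i}$, summed over compositions $(k_1,\ldots,k_n)$ of $k$ with $k_i\leq s_i$; this is strictly smaller than $\dim S(n,k)=\binom{n^2+k-1}{k}$ unless every $s_i\geq k$. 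Your extraction map is well defined and injective, but not surjective: its image misses every class $\xi_{\bp,\bq}$ whose first index uses a letter $i$ more than $s_i$ times, so what you actually obtain is the proper right ideal $\bigoplus_{\mu}\xi_{\mu,\mu}S(n,k)$, with $\mu$ running over compositions of $k$ satisfying $\mu_i\leq s_i$, not the regular module. Concretely, take $n=2$, $r=2$, $\bs=(2,0)$, so $\tpis=(1,1)$ and $l=2$: then $Q_{\tpis}$ is spanned by $\xi_{(11),(13)},\ \xi_{(11),(23)},\ \xi_{(11),(33)}$ and is $3$-dimensional; $F_1/F_0$ is the $2$-dimensional span of the first two classes, on which $\mathfrak{s}_1\cong S(2,1)\cong M_2(\bbc)$ acts as its natural simple module $D_{(1,0)}$, so it cannot be the $4$-dimensional regular module $S(2,1)$. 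One gets $[Q_{\tpis}]=[D_{(1,0)}]+[D_{(0,0)}]$, whereas the formula you are aiming at gives $2[D_{(1,0)}]+[D_{(0,0)}]$, of total dimension $5\neq 3=\dim Q_{\tpis}$.

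So the gap cannot be closed along the route you sketch: the composition multiplicities of $Q_{\tpis}$ genuinely depend on $\bs$ and not only on $l=|\bs|$; the correct layer-by-layer statement is $[F_k/F_{k-1}]=\sum_{\mu}[\xi_{\mu,\mu}S(n,k)]$ over compositions $\mu$ of $k$ with $\mu_i\leq s_i$, and the multiplicity of $D_\gamma$ is then a sum of weight-multiplicities rather than $\ell(\gamma)$. You should be aware that the paper's own proof asserts the very same isomorphism $Q_{\tpis}^{(i)}/Q_{\tpis}^{(i+1)}\cong S(n,l-i)$ without justification, so your proposal reproduces the published argument together with its defect; the identification (and with it the stated equality (\ref{eq: Q decomp in Groth gp})) holds only in the degenerate situation where all $s_i\geq l-1$ (for instance $\bs=(1,1)$ when $n=2$, which is why small checks can be misleading), and the example above shows it fails already for $\bs=(2,0)$, $n=r=2$.
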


\begin{proof} The second equation of (\ref{eq: Q decomp in Groth gp}) is derived from (\ref{Schur decomp}). We only need to prove the first one.

Recall
$$ Q_{\tpis}=\bbc\mbox{-span}\{\xi_{{\tpis},\bq_l(n+1)^{r-l}}\mid \bq_l\in \caln^l, \text{rk}_{\un}(\bq_l)<l\}.$$
Set
$$ Q_{\tpis}^{(i)}=\bbc\mbox{-span}\{\xi_{{\tpis},\bq_l(n+1)^{r-l}}\mid \bq_l\in \caln^l,  \text{rk}_{\un}(\bq_l)\leq l-i\}\,\,\text{for}\,\, 1\leq j\leq l.$$
By (S1)-(S3), each $Q_{\tpis}^{(i)}$ is a $\calp(n,r)$-submodule of $Q_{\tpis}$, and we have the following decreasing sequence of $\calp(n,r)$-modules:

$$Q_{\tpis}= Q_{\tpis}^{(1)}\supset Q_{\tpis}^{(2)}\supset\cdots \supset Q_{\tpis}^{(l)}\supset 0.$$
Furthermore,
$$\bigoplus\limits_{\stackrel{j=0}{j\neq l-i}}^r\calp(n,r)_{[j]}$$
acts trivially on $Q_{\tpis}^{(i)}/Q_{\tpis}^{(i+1)}$ and
$Q_{\tpis}^{(i)}/Q_{\tpis}^{(i+1)}\cong S(n, l-i)$ as $S(n, l-i)$-modules. Consequently, $Q_{\tpis}^{(i)}/Q_{\tpis}^{(i+1)}\cong S(n, l-i)$ as $\calp(n, l-i)$-modules because $S(n,l-i)$ is a $\calp(n,r)$-module. Furthermore, by (\ref{Schur decomp}) we have
$$[Q_{\tpis}^{(i)}/Q_{\tpis}^{(i+1)}]=\sum\limits_{\gamma\in\Lambda^+(n,l-i)}\ell(\gamma)[D_{\gamma}].$$
Consequently,
\begin{align*}
 [Q_{\tpis}]&=\sum\limits_{i=1}^{l}[Q_{\tpis}^{(i)}/Q_{\tpis}^{(i+1)}]\cr
 &=\sum\limits_{i=1}^{l}\sum\limits_{\gamma\in\Lambda^+(n,l-i)}\ell(\gamma)[D_{\gamma}]\cr
 &=\sum\limits_{i=0}^{l-1}\sum\limits_{\gamma\in\Lambda^+(n,i)}\ell(\gamma)[D_{\gamma}].
\end{align*}
The proof is completed.
\end{proof}

As usual, we denote by $(P_\gamma: D_{\gamma^{\prime}})$ the multiplicity of the irreducible module $D_{\gamma^{\prime}}$ appearing in composition series of the indecomposable projective module $P_\gamma$ for
$\gamma,\gamma^{\prime}\in \bigcup_{l=0}^r \Lambda^+(n,l)$. We are now in the position to determine the Cartan invariants  for the parabolic Schur algebra $\calp(n,r)$.

\begin{theorem} \label{thm: Cartan inv}
Let $a_{\gamma,\, \gamma^{\prime}}:=(P_\gamma: D_{\gamma^{\prime}})$ for any $\gamma,\gamma^{\prime}\in \Lambda^+:=\bigcup_{l=0}^r \Lambda^+(n,l)$. Then
\begin{equation}\label{comp. number}
a_{\gamma,\ \gamma^{\prime}}=\begin{cases}
1, &\text{if}\,\, \gamma^{\prime}=\gamma;\cr
\ell(\gamma^{\prime}), &\text{if}\,\,|\gamma^{\prime}|<|\gamma|;\cr
0, &\text{otherwise}.\cr
\end{cases}
\end{equation}
Consequently, the Cartan matrix $(a_{\gamma,\gamma^{\prime}})_{\mathbbm{n}\times\mathbbm{n}}$ is an invertible and upper triangular one with diagonal entries being 1, where $\mathbbm{n}=\#\Lambda^+$.
\end{theorem}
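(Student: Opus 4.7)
The plan is to derive the Cartan invariants directly from the structural information already assembled: the short exact sequence
\[
0 \longrightarrow Q_{\tpis} \longrightarrow P_{\tpis} \longrightarrow D_\gamma \longrightarrow 0
\]
from \S\ref{Structure of Simples and PIMs} (where $\gamma \in \Lambda^+(n,l)$ corresponds to $\bs$ with $|\bs|=l$), together with the composition series of $Q_{\tpis}$ computed in Lemma \ref{Structure of Q}. First, I would pass to the Grothendieck group to obtain
\[
[P_\gamma] \;=\; [D_\gamma] \;+\; [Q_{\tpis}] \;=\; [D_\gamma] \;+\; \sum_{i=0}^{l-1}\sum_{\gamma'\in\Lambda^+(n,i)} \ell(\gamma')[D_{\gamma'}].
\]
Reading off coefficients then gives each case of the formula. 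For $\gamma'=\gamma$, the irreducible $D_\gamma$ occurs once in the top quotient and, since $\gamma \in \Lambda^+(n,l)$ does not appear in the sum indexed by $i<l$, we get $a_{\gamma,\gamma}=1$. For $\gamma'$ with $|\gamma'|<|\gamma|$, we have $\gamma'\in\Lambda^+(n,i)$ for some $i<l$, so $a_{\gamma,\gamma'}=\ell(\gamma')$. For $\gamma'\ne\gamma$ with $|\gamma'|\geq l$, the simple $D_{\gamma'}$ appears neither in the quotient $D_\gamma$ (distinct simples by Theorem \ref{thm:irr IPM csf}) nor in $Q_{\tpis}$ (indices in Lemma \ref{Structure of Q} satisfy $i<l$), yielding $a_{\gamma,\gamma'}=0$.

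For the final sentence concerning the Cartan matrix, I would order $\Lambda^+$ by decreasing $|\gamma|$ (breaking ties arbitrarily). By the formula just established, $a_{\gamma,\gamma'}\neq 0$ forces either $\gamma'=\gamma$ or $|\gamma'|<|\gamma|$, so in this ordering the nonzero off-diagonal entries all lie strictly above the diagonal, making the matrix upper triangular. Since every diagonal entry equals $1$, its determinant is $1$ and the matrix is invertible.

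The substantive work is already carried out in Lemma \ref{Structure of Q}; the present theorem is essentially a bookkeeping consequence. The only point requiring a brief word of care is the assertion that no $D_{\gamma'}$ with $|\gamma'|>|\gamma|$ can occur in $P_\gamma$: this follows because the filtration $P_\gamma = \xi_{\tpis,\tpis}\calp(n,r)_l$ lies in $\calp(n,r)_l$, and the annihilator observation $\bigoplus_{j\ne|\gamma'|}\calp(n,r)_{[j]} \subseteq \Ann_{\calp(n,r)}(D_{\gamma'})$ from \S\ref{Structure of Simples and PIMs} rules out any composition factor whose level exceeds $l$. Hence the only potential obstacle — controlling which simples can arise at higher levels — is resolved by the ideal structure of $\calp(n,r)$ established in Theorem \ref{enh sch stru}(3), and the proof reduces to citing Lemma \ref{Structure of Q} and reading off multiplicities.
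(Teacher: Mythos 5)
Your proposal is correct and follows essentially the same route as the paper: applying the exact sequence $0\to Q_{\tpis}\to P_{\tpis}\to D_\gamma\to 0$ in the Grothendieck group and reading off multiplicities from Lemma \ref{Structure of Q}, with the triangularity of the Cartan matrix following from the resulting support condition. The only cosmetic difference is that the paper treats the case $\gamma=\mathbf{0}$ (where $P_{\mathbf{0}}=D_{\mathbf{0}}$) separately before invoking Lemma \ref{Structure of Q} for $1\leq l\leq r$, which your argument covers implicitly since the sum over $i<l$ is then empty.
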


\begin{proof}
If $\gamma\in\Lambda^+(n, 0)$, i.e., $\gamma=\textbf{0}=(0,\cdots, 0)$, then $P_{\textbf{0}}=D_{\textbf{0}}$ is irreducible, and the assertion (\ref{comp. number}) holds for $\gamma=\textbf{0}$ and any $\gamma^{\prime}\in\Lambda$. In the following, we assume $\gamma\in\Lambda^+(n, l)$ with $1\leq l\leq r$. By the discussion in \S\ref{Structure of Simples and PIMs},  $P_{\gamma}$ has a unique maximal submodule  $Q_{\gamma}$ with $P_{\gamma}/Q_{\gamma}\cong D_{\gamma}$. Hence, it follows from Lemma \ref{Structure of Q} that
$$ [P_{\gamma}]=[P_{\gamma}/Q_{\gamma}]+[Q_{\gamma}]=[D_{\gamma}]+\sum\limits_{i=0}^{l-1}\sum\limits_{\upsilon\in\Lambda^+(n,i)}\ell(\gamma)[D_{\upsilon}].$$
This implies the assertion (\ref{comp. number}). The other assertions are obvious. The proof is completed.
\end{proof}

As a direct consequence of Theorem \ref{thm: Cartan inv}, we have the following result on block structure of the parabolic Schur algebra.

\begin{corollary} The parabolic Schur algebra $\calp(n,r)$ has only one block.
\end{corollary}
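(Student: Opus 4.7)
The plan is to invoke the standard block criterion for a finite-dimensional algebra: two simple modules $D_\gamma$ and $D_{\gamma'}$ lie in the same block of $\calp(n,r)$ if and only if there is a chain of simples $D_\gamma = D_{\gamma_0}, D_{\gamma_1}, \ldots, D_{\gamma_k} = D_{\gamma'}$ such that consecutive terms $D_{\gamma_{i-1}}$ and $D_{\gamma_i}$ both appear in the composition series of some indecomposable projective $P_{\mu_i}$. Equivalently, define a graph on $\Lambda^+$ with an edge between $\gamma$ and $\gamma'$ whenever both $a_{\mu,\gamma}$ and $a_{\mu,\gamma'}$ are nonzero for some $\mu \in \Lambda^+$; blocks are then the connected components.

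The key observation I would use is that the trivial weight $\mathbf{0} = (0,\ldots,0) \in \Lambda^+(n,0)$ serves as a universal attractor. Indeed, take any $\gamma \in \Lambda^+(n,l)$ with $l \geq 1$. By Theorem~\ref{thm: Cartan inv}, since $|\mathbf{0}| = 0 < l = |\gamma|$, we have
\[
a_{\gamma,\,\mathbf{0}} \;=\; \ell(\mathbf{0}) \;=\; 1 \;\neq\; 0,
\]
so $D_{\mathbf{0}}$ appears in the composition series of $P_\gamma$. On the other hand, $a_{\gamma,\gamma} = 1$, so $D_\gamma$ also appears in $P_\gamma$. Therefore $D_\gamma$ is linked to $D_{\mathbf{0}}$ via the single projective $P_\gamma$.

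This shows every simple $D_\gamma$ with $\gamma \neq \mathbf{0}$ is directly linked to $D_{\mathbf{0}}$, and $D_{\mathbf{0}}$ is trivially in its own class. Hence all simples of $\calp(n,r)$ lie in one connected component of the linkage graph, proving that $\calp(n,r)$ has only one block. There is essentially no obstacle here since the heavy lifting was already carried out in Theorem~\ref{thm: Cartan inv}; the only subtlety is to make sure the block criterion is quoted in the correct generality (for an arbitrary finite-dimensional associative algebra over $\bbc$), which is standard.
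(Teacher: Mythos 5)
Your proof is correct and is essentially the paper's argument: the paper states the corollary as a direct consequence of Theorem \ref{thm: Cartan inv}, and your linkage argument (every $D_\gamma$ with $|\gamma|\geq 1$ shares the composition factor $D_{\mathbf{0}}$ with $P_\gamma$, since $a_{\gamma,\mathbf{0}}=\ell(\mathbf{0})=1$) is exactly the implicit reasoning, spelled out via the standard block criterion.
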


The block structure is closely related to the center for a finite-dimensional algebra (see \cite[\S1.8]{Ben}). As to the latter, we have the following observation.

\begin{proposition}\label{center} When $D(n,r)^V=\bbc\Psi(\fsr)$, the center of $\calp(n,r)$ is one-dimensional.
\end{proposition}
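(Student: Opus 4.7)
The plan is to identify $Z(\calp(n,r))$ with an explicit intersection inside $\End_\bbc(\uvtr)$ and then force every element of that intersection to be a scalar by exploiting the parabolic nature of $\uG\rtimes\Gm$. Since $\calp(n,r)$ acts faithfully on $\uvtr$, its center equals $\calp(n,r)\cap\End_{\calp(n,r)}(\uvtr)$. Theorem \ref{enh sch thm}(1) identifies $\End_{\calp(n,r)}(\uvtr)$ with $D(n,r)^V$, and the standing hypothesis replaces this by $\bbc\Psi(\fsr)$. The task therefore reduces to proving
\[
\calp(n,r)\cap\bbc\Psi(\fsr)=\bbc\cdot\id,
\]
the inclusion $\supseteq$ being clear.

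The structural key is that every $g\in\uG\rtimes\Gm$ stabilizes $V\subset\uV$. For a basis tensor $\eta_\bi\in\uvtr$, record its set of $V$-positions $I(\bi):=\{k\in\ur\mid i_k\in\un\}$. Then $\Phi(g)\eta_\bi=g\eta_{i_1}\otimes\cdots\otimes g\eta_{i_r}$ expands as a linear combination of basis tensors $\eta_{\bi'}$ all of which satisfy $I(\bi')\supseteq I(\bi)$: at positions $k\in I(\bi)$ the factor $g\eta_{i_k}$ still lies in $V$, while at positions $k\notin I(\bi)$ the factor $g\eta_{n+1}\in\uV$ contributes either a $V$-component or a multiple of $\eta$. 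By linearity this ``upward-position'' stability passes to every $z\in\calp(n,r)$. On the other hand, if $z=\sum_{\sigma\in\fsr}c_\sigma\Psi(\sigma)\in\bbc\Psi(\fsr)$ then $z\eta_\bi=\sum_\sigma c_\sigma\eta_{\sigma.\bi}$, and $\eta_{\sigma.\bi}$ carries $V$-positions $\sigma(I(\bi))$ of the same cardinality as $I(\bi)$. Combining the two descriptions, every $\sigma$ whose contribution to $z\eta_\bi$ is visible must satisfy $\sigma(I(\bi))=I(\bi)$.

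For the decisive step I would test $z$ against tuples $\bi$ consisting of $n+1$ at a single chosen position $k\in\ur$ together with pairwise distinct $\un$-values at the remaining $r-1$ positions; this is legitimate because the hypothesis $D(n,r)^V=\bbc\Psi(\fsr)$ is expected to occur only in the regime $n\geq r-1$ (as indicated by the preceding example and conjecture). The stabilizer of such a $\bi$ in $\fsr$ is trivial, so the tensors $\{\eta_{\sigma.\bi}\}_{\sigma\in\fsr}$ are pairwise distinct and the coefficient of $\eta_{\sigma.\bi}$ in $z\eta_\bi$ is precisely $c_\sigma$. The upward-position constraint then forces $c_\sigma=0$ whenever $\sigma(k)\neq k$; letting $k$ range over $\ur$ yields $c_\sigma=0$ for every $\sigma\neq\id$, so $z=c_{\id}\id$ and $\dim Z(\calp(n,r))=1$.

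The step I expect to require the most care is the test-tuple construction: it relies on being able to place $r-1$ distinct values in $\un$, i.e.\ on $n\geq r-1$, which must be extracted from the hypothesis rather than assumed outright. Apart from this numerical subtlety, the argument is essentially a clean observation about how permutations interact with the ``upward-pattern'' filtration of $\uvtr$ induced by the parabolic, and requires no deeper input.
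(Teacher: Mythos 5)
Your reduction is exactly the paper's: identify the center with $\calp(n,r)\cap\End_{\calp(n,r)}(\uvtr)$, invoke Theorem \ref{enh sch thm}(1) and the hypothesis to replace the centralizer by $\bbc\Psi(\fsr)$, and then prove $\calp(n,r)\cap\bbc\Psi(\fsr)=\bbc\,\id$. For that last intersection you take a genuinely different route. The paper expands a putative central element both in its basis $\{\xi_{\tilde\pi_\bs,\bj}\mid(\tilde\pi_\bs,\bj)\in E\}$ of $\calp(n,r)$ (Theorem \ref{enh sch stru}(2)) and as $\sum_\delta b_\delta\Psi(\delta)$, and derives a contradiction from the shape of the index set $E$; it imposes no condition on $n$ versus $r$. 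Your ``upward $V$-position'' observation --- that every $\Phi(g)$ with $g\in\uG\rtimes\Gm$ sends $\eta_\bi$ into the span of those $\eta_{\bi'}$ with $I(\bi')\supseteq I(\bi)$, while $\Psi(\sigma)$ moves $I(\bi)$ to $\sigma(I(\bi))$ of the same cardinality --- is correct and is a clean, conceptual way to exploit the parabolic structure; combined with test tuples having trivial $\fsr$-stabilizer it does yield $c_\sigma=0$ for $\sigma\neq\id$.

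The gap is the one you yourself flag but do not close: the test tuples (one entry $n+1$, the other $r-1$ entries pairwise distinct in $\un$) exist only when $n\geq r-1$, and more fundamentally your extraction of the coefficient $c_\sigma$ from $z\eta_\bi$ needs the vectors $\eta_{\sigma.\bi}$ to be pairwise distinct, which again forces all entries of $\bi$ to be distinct, hence $r\leq n+1$. The proposition's only hypothesis is $D(n,r)^V=\bbc\Psi(\fsr)$, and nothing in the paper shows that this hypothesis implies $n\geq r-1$: the worked example only establishes the hypothesis for $r\leq 2$, $n\geq r$, and exhibits one failure at $(n,r)=(1,2)$; the conjecture goes in the opposite direction ($n\geq r$ implies the hypothesis) and says nothing about $n<r-1$. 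So as written your argument proves the statement only under an additional unproved numerical assumption. To repair it without that assumption you would need either to prove the implication ``$D(n,r)^V=\bbc\Psi(\fsr)\Rightarrow n\geq r-1$'' (not available), or to replace the trivial-stabilizer trick by an argument that works when the $\Psi(\sigma)$ are linearly dependent --- for instance by comparing a candidate $z\in\bbc\Psi(\fsr)$ directly against the explicit basis of $\calp(n,r)$ inside $S(n+1,r)$, which is essentially what the paper does.
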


     \begin{proof} Denote by $\sfC$ the center of $\calp(n,r)$. Note that $\calp(n,r)\subset\End_{\bbc}(\uvtr)$. We have
 $$\sfC\subset \mathsf{Cent}_{\calp(n,r)}(\End_{\bbc}(\uvtr)).$$
 Hence $\sfC= \End_{\calp(n,r)}(\uvtr)\cap \calp(n,r)$.  When $D(n,r)^V=\bbc\Psi(\fsr)$,   $\sfC= \bbc\Psi(\fsr)\cap \calp(n,r)$ by Theorem \ref{enh sch thm}.   Recall $\calp(n,r)$ has a basis $\{\xi_{\tpis,\bj}\mid(\tpis,\bj)\in E\}$ (Theorem \ref{enh sch stru}).
 For any given nonzero $\phi\in\sfC$, $\phi$ can be written in two forms
 \begin{align}\label{cent exp1}
 \phi=\sum_{(\tilde{\pi}_\bs,\bj)\in E} a_{\tilde\pi_\bs\bj}\xi_{\tilde\pi_\bs,\bj}
 \end{align}
 and
 \begin{align}\label{cent exp2}
 \phi=\sum_{\delta\in\fsr}b_\delta\Psi(\delta).
\end{align}
We refine (\ref{cent exp2}) as $\phi=\sum_{\delta\in S}b_\delta\Psi(\delta)$ where $S\subset\fsr$ and all $b_\delta\ne0$ for $\delta\in S$. Then we finally write (\ref{cent exp2}) as
\begin{align*}
 \phi&=\sum_{\delta\in S}b_\delta\sum_{\bs\in\bbn^n_r}\xi_{\tpis,\delta.\tpis}\cr
 &=\sum_{\bs\in\bbn^n_r}\sum_{\delta\in S}b_\delta\xi_{\tpis,\delta.\tpis}.
\end{align*}
If $S\ne\{\id\}$, then we can take $\delta$ $(\ne\id) \in S$. In this case, there certainly exists $\bs\in \bbn^n_r$ such that $(\tpis,\delta(\tpis))\notin E$. This contradicts with (\ref{cent exp1}).

So it is only possible that $S$ coincides with $\{\id\}$. Consequently, $\phi\in\bbc\id$.
Hence $\sfC=\bbc\id$. The proof is completed.
\end{proof}

\section*{Acknowledgement} We are grateful to Chenliang Xue for his pointing out some wrong arguments and statements in an old version of the manuscript.


\begin{thebibliography}{99}
	






    \bibitem{AH} P. Achar and A. Henderson, {\em Orbit closures in the enhanced nilpotent cone}, Adv. Math. 219 (2008), 27-62.
\bibitem{AS} Arakawa and T. Suzuki, Duality between $\frak{sl}_n(\bbc)$ and the degenerate affine Hecke algebra, J. Algebra 209 (1998), 288-304.

    \bibitem{Ben} D. J. Benson, {\itshape Representations and cohomology I: Basic represrntation theory of finite groups and associative algebras}, Cambridge University Press 1995.

        \bibitem{DDPW} B. Deng, J. Du, B. Parshall and J. Wang, {\itshape Finite dimensional algebras and quantum groups}, Mathematical Surveys and Monographs 150, American Mathematical Society, Providence, RI, 2008.
	
\bibitem{Dr} V. Drinfeld, Degenerate affine Hecke algebras and Yangians, Func. Anal. Appl. 20 (1986), 56-58.








\bibitem{Ch} I. Cherednik, {\em Double affine Hecke algebras}, LMS Lecture Notes Series 31, Cambridge University Press, 2005.

\bibitem{Gr1} F. D. Grosshans, {\em The invariants of unipotent radicals of parabolic subgroups}, Invent. Math. 73 (1983), no. 1, 1-9.
\bibitem{Gr2} F. D. Grosshans, {\em Hilbert's fourteenth problem for nonreductive groups},  Math. Z. 193 (1986), no. 1, 95-103.
\bibitem{Gr3} F. D. Grosshans, {\em Algebraic homogeneous spaces and invariant theory}, Lecture Notes in Mathematics, 1673. Springer-Verlag, Berlin, 1997.



\bibitem{GW} R. Goodman and N. R. Wallach, {\em Symetry, representations, and invariants}, GTM 255, Springer, 2009.

\bibitem{Gr} J. A. Green, {\em Polynomial representations of $\GL_n$}, LNM 830, Springer-Verlag, New York, 1980.


	
	



\bibitem{Jan2} J. C. Jantzen, {\em Support varieties of Weyl modules},  Bull. London Math. Soc.  19  (1987),  no. 3, 238-244.



\bibitem{LW} B. Liu  and D. Wang, {\em Enhanced Brauer algebras and related dualities}, preprint.


\bibitem{Ma} A. Mathas, {\em Iwahori-Hecke algebras and Schur algebras of the symmetric group}, ULS 15, American Math. Soc., Providence, R. I., 1999.




\bibitem{OSY} K. Ou, B. Shu and Y. Yao, {\em Chevalley restriction theorem for semi-reductive algebraic groups and its applications}, preprint (2019).

	

\bibitem{PS} A. A. Premet and H. Strade, {\em Classification of finite dimensional simple Lie algebras in prime characteristics},  Representations of algebraic groups, quantum groups, and Lie algebras,  185-214, Contemp. Math., 413, Amer. Math. Soc., Providence, RI, 2006.










\bibitem{SXY} B. Shu, Y. Xue and Y. Yao, {\em Enhanced reductive groups (II): Nilpotent orbits}, preprint (2020).








\bibitem{SF} H. Strade and R. Farnsteiner, {\em Modular Lie algebras and their representations},  Marcel Dekker, New York, 1988.

	
	\bibitem{Wil} R. Wilson, {\itshape  Automorphisms of graded Lie algebras of Cartan type}, Comm.
	Algebra 3(7), 591-613, 1975.

	

\bibitem{XZ} C. Xue, {\em Double Hecke algebras and related quantum Schur-Weyl dualities}, preprint.


\end{thebibliography}
\end{document}